\documentclass[11pt]{article} \pagestyle{plain}
\usepackage{a4,amssymb, amsmath, amsthm}
\usepackage{graphics,color}
\usepackage{epsfig}
\usepackage{subfigure}
  
\newtheorem{theorem}{Theorem}[section]
\newtheorem{corollary}[theorem]{Corollary}
\newtheorem{lemma}[theorem]{Lemma}
\newtheorem{proposition}[theorem]{Proposition}

\theoremstyle{definition}
\newtheorem{definition}[theorem]{Definition}

\theoremstyle{rem}
\newtheorem{rem}[theorem]{Remark}

\newcommand{\R}{\mathbb{R}}

\addtolength{\topmargin}{-.5in}
\addtolength{\textheight}{0.4in}
	\addtolength{\textwidth}{0.8in}

\begin{document}

\providecommand{\keywords}[1]{{\noindent \textit{Key words:}} #1}
\providecommand{\msc}[1]{{\noindent \textit{Mathematics Subject Classification:}} #1}

\title{A residual a posteriori error estimate for the time--domain boundary element method\\\vskip 0.8cm}
\author{ Heiko Gimperlein\thanks{Maxwell Institute for Mathematical Sciences and Department of Mathematics, Heriot--Watt University, Edinburgh, EH14 4AS, United Kingdom, email: h.gimperlein@hw.ac.uk.}  \thanks{Institute for Mathematics, University of Paderborn, Warburger Str.~100, 33098 Paderborn, Germany.} \and Ceyhun \"{O}zdemir\thanks{Institute of Applied Mathematics, Leibniz University Hannover, 30167 Hannover, Germany.} \thanks{Institute for Mechanics,  Graz University of Technology, 8010 Graz, Austria. \newline H.~G.~acknowledges partial support by ERC Advanced Grant HARG 268105 and the EPSRC Impact Acceleration Account. C.~\"{O}.~was supported by a scholarship of the Avicenna Foundation.}\and  David Stark${}^\ast$ \and Ernst P.~Stephan${}^\ddagger$}\date{}

\maketitle \vskip 0.5cm
\begin{abstract}
\noindent This article investigates residual a posteriori error estimates and adaptive mesh refinements for time-dependent boundary element methods for the wave equation. We obtain reliable estimates for Dirichlet and acoustic boundary conditions which hold for a large class of discretizations. Efficiency of the error estimate is shown for a natural discretization of low order. Numerical examples confirm the theoretical results. The resulting adaptive mesh refinement procedures in $3d$ recover the adaptive convergence rates known for elliptic problems.\\
\end{abstract}

\msc{65N38 (primary); 65M15; 35L67 (secondary)}\\

\keywords{boundary element method; a posteriori error estimates; adaptive mesh refinements; screen problems; wave equation.}

\section{Introduction}\label{intro}


The efficient numerical treatment of boundary integral equations using adaptive mesh refinement procedures has been extensively investigated for the numerical
solution of homogeneous elliptic problems in unbounded domains \cite{cc, cs}. See \cite{gwinsteph} for a recent exposition. 

In this article we investigate the extension of the a posteriori error analysis and adaptive mesh refinement procedures to initial-boundary value problems for the wave equation, formulated as boundary integral equations in the time-domain \cite{costabel04, sa}. We prove a reliable a posteriori error estimate of residual type for a large class of conforming discretizations.  It is efficient for a time-domain boundary element method on a globally quasi-uniform mesh. The error estimate defines an adaptive mesh refinement procedure, which  recovers the convergence rates known for time-independent screen problems.  

There has been recent interest in the solution of such problems on adapted meshes. Similar to the elliptic case, singularities of the solution may appear at singular points of the boundary, as discussed in \cite{kokotov, kokotov3,plamenevskii}, and in trapping regions. \textcolor{black}{For finite element methods, M\"{u}ller and Schwab used the analytical results to recover quasi-optimal convergence rates on time-independent graded meshes in $2d$ polygons. For boundary element methods in $3d$} time-independent graded meshes have been shown to recover quasi-optimal convergence rates for edge and corner singularities \cite{graded}. First steps towards time-adaptivity for singular temporal behavior are due to Sauter and Veit \cite{sv} in \textcolor{black}{$3$} dimensions, and also convolution quadrature methods with graded, non-adaptively chosen time steps have been studied, for example in \cite{ss}. Gl\"{a}fke \cite{Glaefke} showed first results towards space-time refinements in $2$ dimensions, and in unpublished work Abboud uses ZZ error indicators for computations with space-adaptive mesh refinements for screen problems. 

 \textcolor{black}{The above works have shown the relevance of time-independent adapted meshes not only in simple convex domains, but in realistic complex, heterogeneous geometries. For the noise emission of car tires \cite{comput, Gimperlein} the sound amplification in the cuspidal, non-convex horn geometry between tire and road crucially determines the emitted sound. Time-independent meshes graded into the horn are needed for the accurate computation of the sound emission characteristics, due to the complexity of the geometry, even for a constant-coefficient PDE \cite{graded}. Adaptive meshes are expected to be of use in more general heterogeneous geometries, where the time-averaged indicators resolve persistent spatial inhomogeneities of the solution such as in the current article. \textcolor{black}{The error estimates presented in this article apply to meshes locally refined in both space and time. In 2d, \cite{Glaefke} uses such refinements to resolve space-time singularities, such as travelling wave fronts, with space-time adaptive mesh refinements. The approach requires to assemble $n+1$ matrices in the $n$-th time step, and a feasible 3d implementation will require major algorithmic considerations in future work. In this article we focus on adaptive procedures for heterogeneous geometries, as a key step towards general space-time singularities in 3d.  It complements the work on adaptive time-stepping for a fixed spatial mesh by other authors \cite{sv}.} }\\

To describe the main results, we consider the wave equation
$$\partial_t^2 u - \Delta u = 0\ , \quad u=0\ \text{ for }\ t\leq 0\ ,$$
in the complement \textcolor{black}{$\mathbb{R}^d \setminus \overline{\Omega}$} of a polyhedral domain or screen, with an emphasis on the challenging case $d=3$.
On the boundary $\Gamma = \partial \Omega$ both Dirichlet and acoustic boundary conditions,
$$u = f\ ,\qquad \text{respectively}\qquad \partial_\nu u -\alpha \partial_t u = f \ ,$$
are considered. Here $f$ is given, $\nu$ is the outer unit normal vector to $\Gamma$, and $0<\alpha, \alpha^{-1}\in L^\infty(\Gamma)$.

Following Bamberger and Ha Duong \cite{bh}, we recast the boundary problem as a time dependent boundary integral equation. The Dirichlet problem is equivalent to a hyperbolic variant of Symm's integral equation:
\begin{align} \label{symmhyp}
\mathcal{V} \phi(t,x)= \int_{\mathbb{R}^+} \int_{\Gamma} G(t- \tau,x,y) \phi(\tau,y)\ ds_y \ d\tau = f(t,x)\ .
\end{align}
{Here $\phi$ is sought in a space-time anisotropic Sobolev space $H^{1}_\sigma(\mathbb{R}^+,\widetilde{H}^{-\frac{1}{2}}(\Gamma))$, and} $G$ is a fundamental solution of the wave equation,
\begin{align}\label{eq:green}
 G(t-s,x,y)&= \frac{H(t-s-|x-y|)}{2\pi \sqrt{(t-s)^2+|x-y|^2}}\qquad &\text{in 2d},\\
 G(t-s,x,y)&=\frac{\delta(t-s-|x-y|)}{4\pi |x-y|} &\text{in 3d}.
\end{align}
Here $H$ is the Heaviside function and $\delta$ the Dirac distribution. Our results apply, in particular, to a Galerkin discretization of the weak form of \eqref{symmhyp} in a subspace $V \subset H^{1}_\sigma(\mathbb{R}^+,\widetilde{H}^{-\frac{1}{2}}(\Gamma))$,
\begin{equation}\label{DP}
\int_{\mathbb{R}^+}\int_\Gamma \mathcal{V} \partial_t {\phi}(t,x)\ \psi(t,x)\ ds_x \, d_\sigma t = \int_{\mathbb{R}^+}\int_\Gamma  \partial_t {f}(t,x)\ \psi(t,x)\ ds_x \, d_\sigma t\ ,
\end{equation}
$d_\sigma t = e^{-2\sigma t} dt$ with $\sigma>0$. {For computations we consider subspaces $V = V^{p,q}_{h, \Delta t}$ of tensor products of piecewise polynomials in space and time, defined in Section \ref{prelim}. But also time discretizations based on smooth functions are of interest \cite{sv}.}\\

This article shows that norms of the  residual give upper and lower bounds for the error. The upper bound \eqref{upbd} holds for arbitrary discretizations, not only the Galerkin method \eqref{DP}, and for general meshes: \\

\noindent \textbf{Theorem A:} \emph{Let $\phi  \in H^{1}_\sigma(\mathbb{R}^+,\widetilde{H}^{-\frac{1}{2}}(\Gamma))$ be the solution to \eqref{DP}, and let $\phi_{h,\Delta t} \in H^{1}_\sigma(\mathbb{R}^+,H^{-\frac{1}{2}}(\Gamma))$ such that $\mathcal{R} = \partial_t f - \mathcal{V} \partial_t \phi_{h,\Delta t}\in H^{0}_\sigma(\mathbb{R}^+,H^1(\Gamma))$. Then
\begin{equation}\label{upbd} \|\phi-\phi_{h,\Delta t}\|_{0,-\frac{1}{2}, \Gamma, \ast}^2 \lesssim {\sum_{i,\Delta} \max\{(\Delta t)_i, h_{\Delta}\}\ \|\mathcal{R}\|_{0,1, [t_i,t_{i+1})\times\Delta}^2}\ .\end{equation}
{Let $\Gamma$  be closed and polyhedral. For a globally quasi-uniform mesh on $\Gamma$, let $V = W^{0}_{h,\Delta t}$ the tensor product of cubic splines in time with piecewise constant functions in space. If $\phi_{h,\Delta t} \in V$ is a Galerkin solution of \eqref{DP} in $V$ and $\phi  \in H^{2}_\sigma(\mathbb{R}^+,H^{-\frac{1}{2}}(\Gamma))$,} then for every $\epsilon>0$ \begin{equation}\label{lowbd}\max\{\Delta t, h\} \|\mathcal{R}\|_{0,1-\epsilon,\Gamma}^2 \lesssim \|\phi-\phi_{h,\Delta t}\|_{2,-\frac{1}{2}, \Gamma}^2\ .\end{equation}}

The upper bound \eqref{upbd} is obtained in Corollary \ref{cccor},  the lower bound \eqref{lowbd} in Theorems \ref{lowerboundthm} and \ref{starthm}. Our numerical results illustrate the a posteriori error estimate of Theorem A for time-domain boundary elements based on \eqref{DP}.\\

Note the loss of time derivatives between the upper and lower bound of the error, in the first Sobolev index. The loss is well-known for error estimates for hyperbolic problems \cite{hd}, \textcolor{black}{but see \cite{sut} for current work on a different inf-sup stable bilinear form.} Our arguments generalize to give reliable a posteriori estimates for the acoustic boundary problem, see Section \ref{acousticsection}.\\

The residual error estimate from Theorem A is used to define adaptive mesh refinements in space, based on the four steps Solve, Estimate, Mark, Refine. Numerical experiments confirm the efficiency and reliability of the estimate in examples. For screen problems, \textcolor{black}{where the geometric singularities pose the greatest numerical challenges,}  they recover the convergence rates known for elliptic problems.\\

\textcolor{black}{Our analysis is in line with recent theoretical progress on time domain boundary element methods. Joly and Rodriguez  \cite{jr} discuss practical Galerkin implementations with weight $\sigma = 0$, as opposed to theoretically justified weight functions. Aimi, Diligenti and collaborators use formulations directly related to the conserved energy of the wave equation on a finite time interval [0,T) \cite{aimi, aimi2, aimi3, aimi4}.  At the expense of a slightly more involved weak formulation, the intrinsic coercivity implies the stability and convergence of these methods, rigorously proven for wave problems in a half-space. A detailed exposition of the mathematical background of time domain integral equations and their discretizations is available in the monograph by Sayas \cite{sa}, including methods based on convolution quadrature. See \cite{costabel04, hd} for more concise introductions.  }\\

\textcolor{black}{This current} work builds on the numerical analysis of adaptive boundary element methods for the Laplace equation, both for Symm's integral equation and the hypersingular equation \cite{cc, cs, cms, cmsp}. Work on different types of error indicators in the time-independent case includes ZZ \cite{cp} and Faermann indicators \cite{f1,f2}. \textcolor{black}{Our numerical examples for screens builds on the work by Becache and Ha Duong for crack problems in the time domain \cite{b1,b2, haduongcrack}.} A comparison of different indicators in the time-domain will be the subject of future research.\\

\noindent \emph{Structure of this article:} Section \ref{prelim} recalls the boundary integral operators associated to the wave equation as well as their mapping properties between suitable space-time anisotropic Sobolev spaces.  The Sobolev spaces are discretized using tensor products of piecewise polynomials in space and time. Section \ref{apost} presents a corresponding space-time discretization for the formulation of the Dirichlet problem in terms of the single layer operator and derives a reliable a posteriori error estimate in a simple setting, for globally quasi-uniform meshes, using a canonical approach which readily adapts to other settings. A second subsection analyzes an acoustic boundary problem, a system of equations involving in addition the double layer, adjoint double layer and hypersingular operators. Section \ref{arberror} then  localizes the space-time Sobolev norm to derive the upper estimate for the Dirichlet problem for arbitrary meshes in Theorem A. The upper estimates are complemented by a lower bound  for the error of a Galerkin approximation on globally quasi-uniform meshes in Section \ref{lower}. {The final step of this proof is a lower bound for the best approximation in  Section \ref{sec:2D-Approx}.} 
Section \ref{algo} discusses details of the implementation, \textcolor{black}{and the algorithmic challenges towards efficient space-time adaptive codes in 3d are outlined in Section \ref{towardsst}. Section \ref{experiments} finally presents} numerical experiments which confirm the theoretical results. \textcolor{black}{An appendix shows relevant mapping properties of the boundary integral operators for Sobolev exponents also outside the energy space, Theorem \ref{mapthm2}.}\\

\noindent \emph{Notation:} We write $f \lesssim g$ provided there exists a constant $C$ such that $f \leq Cg$. If the constant
$C$ is allowed to depend on a parameter $\sigma$, we write $f \lesssim_\sigma g$.

\section{Preliminaries and discretization}\label{prelim}

In addition to the single layer operator $\mathcal{V}$, for acoustic boundary problems we require its normal derivative $\mathcal{K}'$, the double layer operator $\mathcal{K}$ and hypersingular operator $\mathcal{W}$ for $x \in \Gamma$, $t>0$: 
\begin{align}
\mathcal{K}\varphi(t,x)&= \int_{\mathbb{R}^+} \int_\Gamma \frac{\partial  \textcolor{black}{G}}{\partial n_y}(t- \tau,x,y)\ \varphi(\tau,y)\ ds_y\ d\tau\ , \label{Koperator}\\
\mathcal{K}' \varphi(t,x)&=  \int_{\mathbb{R}^+}\int_\Gamma \frac{\partial  \textcolor{black}{G}}{\partial n_x}(t- \tau,x,y)\ \varphi(\tau,y)\  ds_y\ d\tau\, , \label{Kpoperator}\\
\mathcal{W} \varphi(t,x)&= -\int_{\mathbb{R}^+} \int_\Gamma \frac{\partial^2  \textcolor{black}{G}}{\partial n_x \partial n_y}(t- \tau,x,y)\ \varphi(\tau,y)\ ds_y\ d\tau \ . \label{Woperator}
\end{align}

Space--time anisotropic Sobolev spaces on the boundary $\Gamma$ provide a convenient setting to study the mapping properties of layer potentials. See \cite{hd, setup} for a detailed exposition. To define them, if $\partial\Gamma\neq \emptyset$, first extend $\Gamma$ to a closed, orientable Lipschitz manifold $\widetilde{\Gamma}$. 

{On $\Gamma$ one defines the usual Sobolev spaces of supported distributions:
$$\widetilde{H}^s(\Gamma) = \{u\in H^s(\widetilde{\Gamma}): \mathrm{supp}\ u \subset {\overline{\Gamma}}\}\ , \quad\ s \in \mathbb{R}\ .$$
Furthermore, ${H}^s(\Gamma)$ is the quotient space $ H^s(\widetilde{\Gamma}) / \widetilde{H}^s({\widetilde{\Gamma}\setminus\overline{\Gamma}})$.} \\
{To write down an explicit family of Sobolev norms, introduce a partition of unity $\alpha_i$ subordinate to a covering of $\widetilde{\Gamma}$ by open sets $B_i$. For diffeomorphisms $\varphi_i$ mapping each $B_i$ into the unit cube $\subset \mathbb{R}^n$, a family of Sobolev norms is induced from $\mathbb{R}^n$, \textcolor{black}{with parameter $\omega \in \mathbb{C}\setminus \{0\}$}:
\begin{equation*}
 ||u||_{s,\omega,{\widetilde{\Gamma}}}=\left( \sum_{i=1}^p \int_{\mathbb{R}^n} (|\omega|^2+|\xi|^2)^s|\mathcal{F}\left\{(\alpha_i u)\circ \varphi_i^{-1}\right\}(\xi)|^2 d\xi \right)^{\frac{1}{2}}\ .
\end{equation*}
The norms for different $\omega \in \mathbb{C}\setminus \{0\}$ are equivalent and $\mathcal{F}$ denotes the Fourier transform. They induce norms on $H^s(\Gamma)$, $||u||_{s,\omega,\Gamma} = \inf_{v \in \widetilde{H}^s(\widetilde{\Gamma}\setminus\overline{\Gamma})} \ ||u+v||_{s,\omega,\widetilde{\Gamma}}$ and on $\widetilde{H}^s(\Gamma)$, $||u||_{s,\omega,\Gamma, \ast } = ||e_+ u||_{s,\omega,\widetilde{\Gamma}}$. {We write $H^s_\omega(\Gamma)$ for $H^s(\Gamma)$, respectively $\widetilde{H}^s_\omega(\Gamma)$ for $\widetilde{H}^s(\Gamma)$, when a norm with a specific $\omega$ is fixed.} $e_+$ extends the distribution $u$ by $0$ from $\Gamma$ to $\widetilde{\Gamma}$. \textcolor{black}{As the norm $||u||_{s,\omega,\Gamma, \ast }$ corresponds to extension by zero, while $||u||_{s,\omega,\Gamma}$ allows  extension by an arbitrary $v$, $||u||_{s,\omega,\Gamma, \ast }$ is stronger than $||u||_{s,\omega,\Gamma}$. Like in the time-independent case the norms are not equivalent whenever $s\in \frac{1}{2} + \mathbb{Z}$ \cite{gwinsteph}.}

{We now define a class of space-time anisotropic Sobolev spaces:
\begin{definition}\label{sobdef}
For {$\sigma>0$ and} $r,s \in\mathbb{R}$ define
\begin{align*}
 H^r_\sigma(\mathbb{R}^+,{H}^s(\Gamma))&=\{ u \in \mathcal{D}^{'}_{+}(H^s(\Gamma)): e^{-\sigma t} u \in \mathcal{S}^{'}_{+}(H^s(\Gamma))  \textrm{ and }   ||u||_{r,s,\sigma,\Gamma} < \infty \}\ , \\
 H^r_\sigma(\mathbb{R}^+,\widetilde{H}^s({\Gamma}))&=\{ u \in \mathcal{D}^{'}_{+}(\widetilde{H}^s({\Gamma})): e^{-\sigma t} u \in \mathcal{S}^{'}_{+}(\widetilde{H}^s({\Gamma}))  \textrm{ and }   ||u||_{r,s,\sigma,\Gamma, \ast} < \infty \}\ .
\end{align*}
$\mathcal{D}^{'}_{+}(E)$ respectively~$\mathcal{S}^{'}_{+}(E)$ denote the spaces of distributions, respectively tempered distributions, on $\mathbb{R}$ with support in $[0,\infty)$, taking values \textcolor{black}{in a Hilbert space $E$. Here we consider $E= {H}^s({\Gamma})$, respectively  $E=\widetilde{H}^s({\Gamma})$.} The relevant norms are given by
\begin{align*}
\|u\|_{r,s,\sigma}:=\|u\|_{r,s,\sigma,\Gamma}&=\left(\int_{-\infty+i\sigma}^{+\infty+i\sigma}|\omega|^{2r}\ \|\hat{u}(\omega)\|^2_{s,\omega,\Gamma}\ d\omega \right)^{\frac{1}{2}}\ ,\\
\|u\|_{r,s,\sigma,\ast} := \|u\|_{r,s,\sigma,\Gamma,\ast}&=\left(\int_{-\infty+i\sigma}^{+\infty+i\sigma}|\omega|^{2r}\ \|\hat{u}(\omega)\|^2_{s,\omega,\Gamma,\ast}\ d\omega \right)^{\frac{1}{2}}\,.
\end{align*}
\end{definition}
\textcolor{black}{They are Hilbert spaces, and we note that the basic case $r=s=0$ is the weighted $L^2$-space with scalar product $\langle u,v \rangle_\sigma := \int_0^\infty e^{-2\sigma t} \int_\Gamma u \overline{v} ds_x\ dt$. Because $\Gamma$ is Lipschitz, like in the case of standard Sobolev spaces} these spaces are independent of the choice of $\alpha_i$ and $\varphi_i$ \textcolor{black}{when $|s|\leq 1$}.\\
}
Using variational arguments, precise mapping properties are well-known for the layer potentials between Sobolev spaces related to the energy. \textcolor{black}{Such estimates have been derived, for example, in \cite{bh, bh2, costabel04, hdthesis, hd}, see \cite{Glaefke} for the precise statement here.}
\begin{theorem}\label{mapthm}
The following operators are continuous for $r \in \mathbb{R}$:
\begin{align*}
\mathcal{V}&: H_\sigma^{r+1}(\mathbb{R}^+, \widetilde{H}^{-\frac{1}{2}}(\Gamma)) \to H_\sigma^{r}(\mathbb{R}^+, H^{\frac{1}{2}}(\Gamma)) \ ,\\ \mathcal{K}' &: H_\sigma^{r+1}(\mathbb{R}^+, \widetilde{H}^{-\frac{1}{2}}(\Gamma)) \to H_\sigma^{r}(\mathbb{R}^+, H^{-\frac{1}{2}}(\Gamma))\ ,\\
\mathcal{K}&: H_\sigma^{r+1}(\mathbb{R}^+, \widetilde{H}^{\frac{1}{2}}(\Gamma)) \to H_\sigma^{r}(\mathbb{R}^+, H^{\frac{1}{2}}(\Gamma)) \ ,\\ \mathcal{W} &: H_\sigma^{r+1}(\mathbb{R}^+, \widetilde{H}^{\frac{1}{2}}(\Gamma)) \to H_\sigma^{r}(\mathbb{R}^+, H^{-\frac{1}{2}}(\Gamma))\ .
\end{align*}
\end{theorem}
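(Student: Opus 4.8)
The plan is to pass to the frequency domain via the Laplace transform in time and to reduce the statement to a family of $\omega$-explicit estimates for the boundary integral operators of the Helmholtz equation. Since the kernels $G(t-\tau,x,y)$ and their normal derivatives depend on time only through $t-\tau$, each of $\mathcal{V}, \mathcal{K}', \mathcal{K}, \mathcal{W}$ is a convolution in time; under the transform $u \mapsto \hat u(\omega)$, evaluated on the line $\mathrm{Im}\,\omega = \sigma$, it therefore becomes multiplication by the corresponding frequency-domain operator, e.g. $\widehat{\mathcal{V}\phi}(\omega) = \hat{\mathcal{V}}(\omega)\hat\phi(\omega)$, where $\hat{\mathcal{V}}(\omega)$ is the single layer operator for the Helmholtz problem $(-\Delta - \omega^2)U = 0$ with complex wave number $\omega$. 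The spatial norms $\|\cdot\|_{s,\omega,\Gamma}$ and $\|\cdot\|_{s,\omega,\Gamma,\ast}$ entering the definition of the space-time norms are tailored exactly to this frequency domain, which is what makes the reduction clean.

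First I would record the $\omega$-explicit continuity estimates for the four frequency-domain operators, uniformly for $\omega$ on the line $\mathrm{Im}\,\omega = \sigma$. For the single layer operator the goal is a bound of the form
$$\|\hat{\mathcal{V}}(\omega)\psi\|_{1/2,\omega,\Gamma} \lesssim_\sigma |\omega|\, \|\psi\|_{-1/2,\omega,\Gamma,\ast},$$
with the single power of $|\omega|$ reflecting the loss of one time derivative between $H^{r+1}_\sigma$ and $H^r_\sigma$; analogous bounds carrying the same factor $|\omega|$ are needed for $\hat{\mathcal{K}}'(\omega)$, $\hat{\mathcal{K}}(\omega)$ and $\hat{\mathcal{W}}(\omega)$ between the respective $\omega$-weighted spaces. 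These are the classical Bamberger--Ha Duong estimates: one represents $\hat{\mathcal{V}}(\omega)\psi$ as the trace of the Helmholtz single-layer potential, writes the associated bilinear form and uses the first Green identity for $-\Delta - \omega^2$ on the interior and exterior of $\Gamma$. Tracking the $\omega$-dependence of the volume terms and of the trace constants produces the polynomial factor in $|\omega|$; the estimates for $\mathcal{K}',\mathcal{K},\mathcal{W}$ then follow from the corresponding single and double layer potential representations together with the jump relations, or by duality from the estimates for $\mathcal{V}$ and $\mathcal{W}$.

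With these pointwise-in-$\omega$ bounds in hand, the time-domain statement follows by Plancherel on the line $\mathrm{Im}\,\omega=\sigma$. For $\mathcal{V}$, inserting $\widehat{\mathcal{V}\phi}(\omega)=\hat{\mathcal{V}}(\omega)\hat\phi(\omega)$ into the definition of $\|\mathcal{V}\phi\|_{r,1/2,\sigma}$ and applying the frequency estimate gives
$$\|\mathcal{V}\phi\|_{r,1/2,\sigma}^2 = \int_{-\infty+i\sigma}^{+\infty+i\sigma} |\omega|^{2r}\,\|\hat{\mathcal{V}}(\omega)\hat\phi(\omega)\|_{1/2,\omega,\Gamma}^2\, d\omega \lesssim_\sigma \int_{-\infty+i\sigma}^{+\infty+i\sigma} |\omega|^{2(r+1)}\,\|\hat\phi(\omega)\|_{-1/2,\omega,\Gamma,\ast}^2\, d\omega,$$
and the right-hand side is exactly $\|\phi\|_{r+1,-1/2,\sigma,\ast}^2$. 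The same computation, with the appropriate $\omega$-weighted spatial norms, handles $\mathcal{K}',\mathcal{K}$ and $\mathcal{W}$, since the weight $|\omega|^{2r}$ in Definition \ref{sobdef} absorbs the extra factor $|\omega|^2$ in precisely the way that turns the index $r+1$ on the domain into $r$ on the range.

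The main obstacle is the second step: deriving the frequency-domain estimates with the sharp power of $|\omega|$ and with constants uniform on the whole line $\mathrm{Im}\,\omega=\sigma$. The Helmholtz fundamental solution for complex $\omega$ combines an elliptic (Laplace) principal part with a lower-order part that is oscillatory in the real direction and must be controlled without losing the correct scaling; the Green-identity argument has to be organized so that the coercivity of the Laplace part dominates while the $\omega^2$-term contributes only the stated polynomial growth. A secondary difficulty is the screen case $\partial\Gamma\neq\emptyset$, where the natural spaces are the supported spaces $\widetilde{H}^s(\Gamma)$ with the stronger $\ast$-norms, so that one must keep track of the extension operator $e_+$ and of the non-equivalence of the $\ast$- and quotient norms for $s\in\frac12+\mathbb{Z}$. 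Since these frequency-explicit mapping properties are established in the references cited just before the theorem, I would ultimately reduce this step to those results and devote the proof to the transform and Plancherel bookkeeping that assembles them into the stated time-domain continuity.
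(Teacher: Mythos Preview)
Your proposal is correct and follows precisely the classical Bamberger--Ha Duong strategy. Note, however, that the paper does not actually give its own proof of this theorem: it is stated as a known result with references to \cite{bh, bh2, costabel04, hdthesis, hd, Glaefke}, so there is nothing to compare against beyond observing that your outline matches the approach in those sources. The same Laplace-transform / Plancherel bookkeeping you describe is what the paper carries out in detail in its appendix for the extended Theorem~\ref{mapthm2}, so your sketch is fully in line with the paper's methodology.
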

\noindent \textcolor{black}{See also \cite{jr} for a detailed discussion of the mapping properties and \cite{sut} for an alternative scale of Sobolev spaces, in both references for Sobolev exponents related to the energy. In the appendix we extend classical arguments for the Laplace equation \cite{cos} to show the following mapping properties also for exponents not related to the energy, as relevant to this article:}
\begin{theorem}\label{mapthm2}
\textcolor{black}{The following operators are continuous for $r \in \mathbb{R}$, $s \in (-\frac12,\frac12 )$:} \begin{align*}
\mathcal{V}&: H_\sigma^{r+1}(\mathbb{R}^+, \widetilde{H}^{-\frac{1}{2}+s}(\Gamma)) \to H_\sigma^{r}(\mathbb{R}^+, H^{\frac{1}{2}+s}(\Gamma)) \ ,\\ \mathcal{K}' &: H_\sigma^{r+1}(\mathbb{R}^+, \widetilde{H}^{-\frac{1}{2}+s}(\Gamma)) \to H_\sigma^{r}(\mathbb{R}^+, H^{-\frac{1}{2}+s}(\Gamma))\ ,\\
\mathcal{K}&: H_\sigma^{r+2}(\mathbb{R}^+, \widetilde{H}^{\frac{1}{2}+s}(\Gamma)) \to H_\sigma^{r}(\mathbb{R}^+, H^{\frac{1}{2}+s}(\Gamma)) \ ,\\ \mathcal{W} &: H_\sigma^{r+2}(\mathbb{R}^+, \widetilde{H}^{\frac{1}{2}+s}(\Gamma)) \to H_\sigma^{r}(\mathbb{R}^+, H^{-\frac{1}{2}+s}(\Gamma))\ .
\end{align*}
\textcolor{black}{If $\Gamma$ is $C^{1,\alpha}$ for some $\alpha >0$, the operators are continuous for  $s \in [-\frac12,\frac12]$.  }
\end{theorem}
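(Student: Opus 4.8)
The plan is to reduce the time-domain statement to a family of frequency-explicit estimates for the associated Helmholtz boundary integral operators and then reassemble the time-domain norms by Plancherel's theorem, following the strategy of Bamberger and Ha Duong \cite{bh, hd} but now tracking the spatial Sobolev exponent $s$. Applying the Fourier--Laplace transform in time to the operators \eqref{symmhyp}, \eqref{Koperator}--\eqref{Woperator}, each of $\mathcal{V}, \mathcal{K}', \mathcal{K}, \mathcal{W}$ becomes a holomorphic family of boundary integral operators $V_\omega, K_\omega', K_\omega, W_\omega$ for the Helmholtz equation $(-\Delta - \omega^2)\hat{u} = 0$, with frequency $\omega = \eta + i\sigma$ ranging over the line $\mathrm{Im}\,\omega = \sigma$. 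By Definition \ref{sobdef} the space-time norms decompose as weighted $L^2$-integrals over this line of the frequency-dependent spatial norms $\|\cdot\|_{s,\omega,\Gamma}$ and $\|\cdot\|_{s,\omega,\Gamma,\ast}$, so it suffices to establish, uniformly along the line, bounds of the form
\begin{align*}
\|V_\omega \hat{\phi}\|_{\frac12+s,\omega,\Gamma} &\lesssim |\omega|\,\|\hat{\phi}\|_{-\frac12+s,\omega,\Gamma,\ast}\ , & \|K_\omega' \hat{\phi}\|_{-\frac12+s,\omega,\Gamma} &\lesssim |\omega|\,\|\hat{\phi}\|_{-\frac12+s,\omega,\Gamma,\ast}\ ,\\
\|K_\omega \hat{\varphi}\|_{\frac12+s,\omega,\Gamma} &\lesssim |\omega|^{2}\,\|\hat{\varphi}\|_{\frac12+s,\omega,\Gamma,\ast}\ , & \|W_\omega \hat{\varphi}\|_{-\frac12+s,\omega,\Gamma} &\lesssim |\omega|^{2}\,\|\hat{\varphi}\|_{\frac12+s,\omega,\Gamma,\ast}\ .
\end{align*}
The powers $|\omega|$ and $|\omega|^{2}$ account precisely for the shift by $+1$, respectively $+2$, in the first Sobolev index of Theorem \ref{mapthm2}.

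First I would establish these frequency-explicit elliptic estimates by representing the operators through the Helmholtz single- and double-layer potentials as solution operators, then taking traces and conormal derivatives. Writing $S_\omega \hat\phi(x) = \int_\Gamma G_\omega(x,y)\hat\phi(y)\,ds_y$ and the analogous double-layer potential, the mapping property $V_\omega: \widetilde{H}^{-\frac12+s}(\Gamma) \to H^{\frac12+s}(\Gamma)$ follows by combining the trace theorem with the interior and exterior elliptic shift theorem for $(-\Delta - \omega^2)$, and similarly for $K_\omega, K_\omega', W_\omega$ using the jump relations. Exactly as in Costabel's treatment of the Laplace case \cite{cos}, the shift theorem on a Lipschitz boundary $\Gamma$ is available only in the range $|s|<\frac12$, which fixes the admissible exponents; when $\Gamma$ is $C^{1,\alpha}$ the elliptic regularity improves and the endpoints $s=\pm\frac12$ are recovered, giving the second assertion. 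To extract the explicit dependence on $|\omega|$ and to reduce each estimate to a fixed reference frequency, I would use the dilation $x \mapsto |\omega|\,x$: under this scaling $(-\Delta - \omega^2)$ is normalized to unit modulus of the frequency and the weighted norms $\|\cdot\|_{s,\omega,\Gamma}$ become, up to Jacobian factors, unweighted norms on the dilated boundary $|\omega|\Gamma$. Since the Lipschitz, respectively $C^{1,\alpha}$, character of $\Gamma$ is invariant under dilation, the elliptic constants stay uniform; the $s=0$ instance is the energy estimate of Theorem \ref{mapthm}, which anchors the frequency dependence, and bookkeeping of the scaling factors together with the derivatives of $G_\omega$ produces exactly the powers $|\omega|$ and $|\omega|^2$ above.

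The remaining step is routine: substituting the frequency-explicit bounds into the line integrals of Definition \ref{sobdef} and applying Plancherel's theorem converts each power of $|\omega|$ into a shift in the temporal index $r$, yielding the stated continuity of $\mathcal{V}, \mathcal{K}', \mathcal{K}, \mathcal{W}$ for all $r \in \mathbb{R}$.

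I expect the main obstacle to be the second step: obtaining the elliptic mapping bounds with $|\omega|$-powers that are uniform in $s$ across the entire open interval, and correctly justifying the endpoint case $s=\pm\frac12$ under the $C^{1,\alpha}$ hypothesis, where the shift theorem for Lipschitz boundaries no longer suffices. The double-layer and hypersingular operators are the most delicate, since their kernels involve one or two conormal derivatives of the Helmholtz fundamental solution; verifying that each such derivative costs exactly one additional power of $|\omega|$ under the dilation --- and not more --- is what distinguishes the $r+2$ shift of $\mathcal{K}, \mathcal{W}$ from the $r+1$ shift of $\mathcal{V}, \mathcal{K}'$, and requires care in tracking the frequency-dependent normalization through the jump relations.
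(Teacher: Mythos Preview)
Your reduction to frequency-explicit Helmholtz estimates via the Fourier--Laplace transform, and the target inequalities with powers $|\omega|$ and $|\omega|^2$, are exactly what the paper's appendix establishes. The gap is in your mechanism for extracting those powers. The dilation $x\mapsto |\omega|x$ sends $\Gamma$ to the unbounded family $\{|\omega|\Gamma:|\omega|\geq\sigma\}$; that the Lipschitz constant is scale-invariant does \emph{not} make the elliptic, trace and solution-operator constants in Costabel's argument uniform over this family --- those depend on diameter and global geometry, not only on the local Lipschitz character. Worse, the normalized frequency satisfies $\mathrm{Im}(\omega/|\omega|)=\sigma/|\omega|\to 0$, so after dilation you are pushed toward the real axis, where the Helmholtz Dirichlet problem on a bounded domain loses uniform well-posedness. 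The ``bookkeeping of scaling factors'' you defer is therefore the whole difficulty, not a routine step.

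The paper avoids scaling and works throughout on the fixed $\Omega$. For $\mathcal{V}^\omega$ and $\mathcal{K}'^\omega$ it bounds the volume potential $G_\omega$ directly as a Fourier multiplier, using $\mathrm{Re}\,i\bar\omega(|\xi|^2-\omega^2)=(\mathrm{Im}\,\omega)(|\xi|^2+|\omega|^2)$ to obtain $\|G_\omega f\|_{-s+2,\omega,\mathbb{R}^d}\lesssim(|\omega|/\mathrm{Im}\,\omega)\|f\|_{-s,\omega,\mathbb{R}^d}$; composing with $\gamma_0^\ast$ and the $\omega$-weighted trace theorem yields the single-layer bounds. For $\mathcal{K}^\omega$ and $\mathcal{W}^\omega$ the double-layer potential is written as $K_1^\omega=(-1+K_0^\omega\gamma_1)T$, with $T$ the Helmholtz Dirichlet solution operator, and the crux is a frequency-explicit bound $\|\gamma_1 T\|_{\mathcal{L}(H^{1/2+\tau}_\omega(\Gamma),H^{-1/2+\tau}_\omega(\Gamma))}\lesssim_\sigma|\omega|$ for the Dirichlet-to-Neumann map (and the analogous Neumann-to-Dirichlet bound), proved by Rellich identities on the fixed domain in the style of Ne\u{c}as. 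The $|\omega|$-dependence emerges there from controlling the lower-order term $\omega^2\int_\Omega(h_i\partial_i u)u$ in the Rellich identity against the boundary pairing $\int_\Gamma u\,\partial_\nu\bar u$ via Green's formula; the factor from $\gamma_1 T$ combined with that from $K_0^\omega$ gives the $|\omega|^2$ for $\mathcal{K},\mathcal{W}$. Your identification of the source of the restriction $|s|<\tfrac12$ and of the $C^{1,\alpha}$ endpoint is correct: both enter only through the range of the $\omega$-weighted trace theorems for $\gamma_0$ and $\gamma_1$.
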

\textcolor{black}{For Lipschitz $\Gamma$, the end point estimate $s = \pm\frac12$ is known for elliptic problems, from a deep result by Verchota \cite{verchota}. Its extension to the wave equation is beyond the scope of this article and will be pursued elsewhere. When $\Omega = \mathbb{R}^n_+$, Fourier methods yield improved estimates for $\mathcal{V}$ and $\mathcal{W}$:
\begin{theorem}[\cite{haduongcrack}, pp. 503-506]\label{mapimproved} The following operators are continuous for $r,s \in \mathbb{R}$:
\begin{align*}
& \mathcal{V}:{H}^{r+\frac{1}{2}}_\sigma(\R^+, \widetilde{H}^{s}(\Gamma))\to {H}^{r}_\sigma(\R^+, {H}^{s+1}(\Gamma)) \ , \\
& \mathcal{W}: {H}^{r}_\sigma(\R^+, \widetilde{H}^{s}(\Gamma))\to {H}^{r}_\sigma(\R^+, {H}^{s-1}(\Gamma))\ .
\end{align*}
\end{theorem}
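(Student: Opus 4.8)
The plan is to exploit the translation invariance of the flat geometry. Since $\Omega = \R^n_+$, the boundary $\Gamma = \R^{n-1}$ is a hyperplane without edge, so $\widetilde{H}^s(\Gamma)$ and $H^s(\Gamma)$ both coincide with the ordinary Sobolev space $H^s(\R^{n-1})$, and the partition of unity $\alpha_i$ and the charts $\varphi_i$ in the definition of the norms may be taken trivial. Consequently, by Plancherel in the tangential space variable and in time, the anisotropic norm of Definition \ref{sobdef} is \emph{exactly} the weighted $L^2$-norm
\begin{equation*}
\|u\|_{r,s,\sigma}^2 = \int_{\mathrm{Im}\,\omega=\sigma}\int_{\R^{n-1}} |\omega|^{2r}\,(|\omega|^2+|\xi|^2)^{s}\,|\widehat{u}(\xi,\omega)|^2\ d\xi\ d\omega\ ,
\end{equation*}
where $\xi$ is dual to the tangential variable and $\omega$, with $\mathrm{Im}\,\omega=\sigma$, is dual to $t$. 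First I would verify that $\mathcal{V}$ and $\mathcal{W}$ are convolution operators in $(x',t)$, hence act as Fourier multipliers on this scale of spaces.

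Next I would compute the symbols. Passing to the Laplace transform in time turns $\partial_t^2-\Delta$ into the Helmholtz operator $|\xi|^2-\omega^2$, i.e. a complex wavenumber $\kappa^2=\omega^2$, and the classical computation of the single layer and hypersingular symbols on a hyperplane gives, up to a positive constant,
\begin{equation*}
\widehat{\mathcal{V}}(\xi,\omega) = \tfrac{1}{2}\,(|\xi|^2-\omega^2)^{-1/2}\ ,\qquad \widehat{\mathcal{W}}(\xi,\omega) = \tfrac{1}{2}\,(|\xi|^2-\omega^2)^{1/2}\ ,
\end{equation*}
with the branch of the root having positive real part. This branch is well defined precisely because $\mathrm{Im}\,\omega=\sigma>0$ keeps $|\xi|^2-\omega^2$ off the negative real axis; only the modulus of the symbol enters the norm estimates.

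With these symbols, continuity is equivalent to a pointwise multiplier bound obtained by comparing the target and domain weights. For $\mathcal{W}$ one needs
\begin{equation*}
\frac{||\xi|^2-\omega^2|}{|\omega|^2+|\xi|^2}\ \lesssim\ 1\ ,
\end{equation*}
which is immediate from the triangle inequality $||\xi|^2-\omega^2|\le |\xi|^2+|\omega|^2$ and costs no time regularity, explaining the gain of a full time derivative over Theorem \ref{mapthm}. For $\mathcal{V}$, after cancelling the common weights $|\omega|^{2r}(|\omega|^2+|\xi|^2)^{s}$, the estimate reduces to
\begin{equation*}
\frac{|\omega|^2+|\xi|^2}{|\omega|\,||\xi|^2-\omega^2|}\ \lesssim_\sigma\ 1\ ,
\end{equation*}
where the extra half power of $|\omega|$, that is, half a time derivative, is spent.

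The main obstacle is the lower bound $||\xi|^2-\omega^2|\gtrsim_\sigma |\omega|^{-1}(|\omega|^2+|\xi|^2)$ hidden in the last display. Writing $\omega=\eta+i\sigma$, one has $|\xi|^2-\omega^2=(|\xi|^2-\eta^2+\sigma^2)-2i\eta\sigma$ and hence $||\xi|^2-\omega^2|^2=(|\xi|^2-\eta^2+\sigma^2)^2+4\eta^2\sigma^2$, together with $|\omega|^2\sim \eta^2+\sigma^2$. Away from the light cone, where $|\xi|^2$ and $\eta^2$ are not comparable, the real part already dominates and yields the elliptic estimate $||\xi|^2-\omega^2|\gtrsim |\omega|^2+|\xi|^2$, which is stronger than required. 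The difficulty is concentrated near the light cone $|\xi|^2\approx\eta^2$, where the real part degenerates; there the imaginary part $2\eta\sigma$ takes over and gives $||\xi|^2-\omega^2|\gtrsim |\eta|\sigma\gtrsim_\sigma |\omega|$, which is exactly of the order $|\omega|^{-1}(|\omega|^2+|\xi|^2)\sim|\omega|$ needed. A case distinction along these two regimes, with the low-frequency range $|\xi|,|\eta|\lesssim\sigma$ handled by boundedness, completes the multiplier estimates and hence the theorem. The same analysis shows the half-derivative loss for $\mathcal{V}$ is sharp, since the quotient $\frac{|\omega|^2+|\xi|^2}{||\xi|^2-\omega^2|}$ is itself unbounded on the light cone.
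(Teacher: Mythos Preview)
Your approach is correct and is the same Fourier-multiplier argument on the flat hyperplane that the paper invokes when it writes ``Fourier methods yield improved estimates'' and cites \cite{haduongcrack}; the paper does not give its own proof of this theorem.

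One remark on efficiency: your case distinction near and away from the light cone for the lower bound on $||\xi|^2-\omega^2|$ works, but it can be replaced by the one-line algebraic identity
\[
\mathrm{Re}\bigl(i\bar{\omega}\,(|\xi|^2-\omega^2)\bigr) \;=\; (\mathrm{Im}\,\omega)\,(|\xi|^2+|\omega|^2)\ ,
\]
which the paper itself uses in the appendix when bounding $G_\omega$. Taking moduli gives $|\omega|\,||\xi|^2-\omega^2|\ge \sigma\,(|\xi|^2+|\omega|^2)$ directly, uniformly in $(\xi,\omega)$, with no regimes to separate. This is exactly the estimate you need for $\mathcal{V}$, and it also makes transparent that the constant depends only on $\sigma^{-1}$.
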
 
See also \cite{jr} for a recent discussion of mapping properties. }\\
%
%
%

For simplicity, we assume that the hypersurface $\Gamma$ consists of triangular faces $\Gamma_i$, $\Gamma=\cup_{i=1}^{N_s} \Gamma_i$. Denote by $h_i$ the diameter of $\Gamma_i$, $h=\max_i h_i$. We choose a basis $\{\varphi_j^p\}$ of the space $V_h^p$ of piecewise polynomial functions of degree $p{\geq 0}$ (continuous if $p\geq 1$).  \\

For the time discretization we consider a decomposition of the time interval $\mathbb{R}^+$ into subintervals $I_n=[t_{n-1}, t_n)$ with time step $|I_n|=(\Delta t)_n$, $n=0,1,\dots$. Let $\Delta t = \sup_n (\Delta t)_n$. We denote by $\{\beta^{j,q}\}$ a  corresponding basis  of  the space $V^q_{\Delta t}$ of piecewise  polynomial  functions of degree of $q\textcolor{black}{\geq 0}$ (continuous and vanishing at $t=0$ if $q\geq 1$). In addition to $V^q_{\Delta t}$ we also require the space $W_{\Delta t} \subset V^3_{\Delta t}$ of cubic splines.

The space-time cylinder $\mathbb{R}^+ \times \Gamma$ is discretized by local tensor products  $V^{p,q}_{h,\Delta t}$ in space and time. In the most general case $\mathbb{R}^+ \times \Gamma = \bigcup_i S_i$ is a disjoint union of space-time elements $S_i = [t_{i}^1, t_i^2)\times \Gamma_i$ for some triangles $\Gamma_i$  and time steps $t_{i}^1, t_i^2$. We will call $\bigcup_i S_i$ shape regular if there are constants $\underline{c}, \overline{c}$ such that $\underline{c} h_i^2 \leq |\Gamma_i| \leq \overline{c} h_i^2$ for all $i$. The discrete function space $V^{p,q}_{h,\Delta t}$ consists of functions which restricted to $S_i$ are products of a polynomial of degree $p\geq 0$ in space and a polynomial of degree $q\geq 0$ in time, continuous in space if $p\geq 1$  and continuous and vanishing at $t=0$ if $q\geq 1$. 

We shall particularly focus on the case where the temporal mesh is the same for all triangles, $\mathbb{R}^+ \times \Gamma = \bigcup_{j,k} [t_{j-1},t_j) \times  \Gamma_{k}^j$, so that for all $j$, $\bigcup_k \Gamma_{k}^j$ is a triangulation of $\Gamma$. In this case a basis for $V^{p,q}_{h,\Delta t}$ is given by tensor products $\beta^{j,q}(t) \varphi_{k}^{j,p}(x)$.\\

\textcolor{black}{The space-time meshes generated by the adaptive mesh refinements considered below are always \emph{a refinement} of such a product mesh $\bigcup_{j,k} [t_{j-1},t_j) \times  \Gamma_{k}^j$, but not necessarily themselves a product mesh. Here, we may consider the orthogonal projections $\varPi_{\Delta t}$ from $L^2(\mathbb{R}_+)$ to $V^q_{\Delta t}$, resp.~$\varPi_h$ from $L^2(\Gamma)$ to $V_h^p$. See \cite{setup} for a discussion of their properties and those of their composition $\Pi_{\Delta t}\Pi_{h}$.  {Furthermore, we define $W^{p}_{h,\Delta t}= V_h^p \otimes W_{\Delta t}$. }}\\

\textcolor{black}{Note the following approximation properties for such meshes, see also Proposition 3.54 of \cite{Glaefke}:}
\begin{lemma}\label{interp}
Let $f \in H^{r}_\sigma(\mathbb{R}^+, H^m(\Gamma){\cap \widetilde{H}^{s}(\Gamma)})$, $0<m\leq q+1$, $0<r\leq p+1$, $s\leq r$, $|l|\leq \frac{1}{2}$ such that $ls\geq 0$. Then there exists $f_{h,\Delta t} = \Pi_{\Delta t}\Pi_h f$ such that for all  $ l,s\leq 0$
\begin{align*}\label{eq:approx}
\|f-f_{h,\Delta t} f\|_{s,l,\Gamma} &\leq C (h^\alpha + (\Delta t)^\beta)||f||_{r,m,\Gamma}\ ,
\end{align*}
where $\alpha = \min\{m-l, m-\frac{m(l+s)}{m+r}\}$, $\beta = \min\{m+r-(l+s), m+r-\frac{m+r}{m}l\}$. If $l,s>0$, $\beta = m+r-(l+s)$.
\end{lemma}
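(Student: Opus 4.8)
The plan is to reduce everything to the one-dimensional picture in time through the Laplace--Fourier transform that defines the norms $\|\cdot\|_{r,s,\sigma}$, and to treat the spatial and temporal projections separately. Since $\Pi_h$ acts only in the spatial variable and $\Pi_{\Delta t}$ only in time, the two projections commute, so I would write
\[
f-\Pi_{\Delta t}\Pi_h f=(\mathrm{Id}-\Pi_h)f+\Pi_h(\mathrm{Id}-\Pi_{\Delta t})f
\]
and estimate the two contributions in the anisotropic norm. The first term carries the spatial approximation and should produce the powers of $h$, the second carries the temporal approximation and should produce the powers of $\Delta t$; the stability of $\Pi_h$ on the weighted spaces $H^l_\omega(\Gamma)$ for $|l|\le\frac12$ is what lets me discard the outer $\Pi_h$ in the second term.

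After transforming, each contribution becomes a frequency integral $\int_{\mathbb{R}+i\sigma}|\omega|^{2s}\|\cdot\|_{l,\omega,\Gamma}^2\,d\omega$, and the whole game is to estimate the fixed-frequency spatial (resp.~temporal) $L^2$-projection error in the $\omega$-weighted norms while tracking the exact power of $|\omega|$. The two ingredients are: (i) the Bramble--Hilbert/duality estimate for $L^2$-projections, which for a negative target index is obtained by duality against the $\omega$-weighted pairing and for a positive index directly — this is precisely the reason the statement splits into the cases $l,s\le 0$ and $l,s>0$, and why the sign condition $ls\ge0$ appears; and (ii) the elementary trade inequality $(|\omega|^2+|\xi|^2)^{a}\le|\omega|^{2(a-b)}(|\omega|^2+|\xi|^2)^{b}$ for $a\le b$, which converts spatial smoothness into powers of $|\omega|$ (that is, into temporal smoothness) and back. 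Feeding a flexible amount of traded smoothness into the frequency integral and comparing against $\int|\omega|^{2r}\|\hat f\|_{m,\omega}^2\,d\omega=\|f\|_{r,m,\sigma}^2$ (using $|\omega|\ge\sigma$ and $s\le r$) gives, after optimizing the trade parameter, the two competing rates whose minimum defines $\alpha$ and $\beta$: one endpoint spends the full spatial (resp.~temporal) order $m$ (resp.~$q+1$), the other balances the remaining regularity across the anisotropy, which is what accounts for the interpolated exponents $m-\frac{m(l+s)}{m+r}$ and $m+r-\frac{m+r}{m}l$.

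The main obstacle I expect is establishing the fixed-frequency $L^2$-projection estimates with constants \emph{uniform in} $\omega$ and valid on the locally refined (not necessarily quasi-uniform) product meshes considered here, in particular the stability and approximation of $\Pi_h$ in the fractional, possibly negative-order weighted norms $H^l_\omega(\Gamma)$, $|l|\le\frac12$, where the duality argument must be run with the $\omega$-dependent pairing. Once these are in place, the remaining work — carrying out the optimization over the trade parameter, separating the ranges $|\omega|\ge1$ and $\sigma\le|\omega|<1$, and absorbing the constants into $\lesssim_\sigma$ — is routine, and the additive structure $h^\alpha+(\Delta t)^\beta$ then follows from the triangle inequality applied to the split above.
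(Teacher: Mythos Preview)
The paper does not actually prove this lemma: it is stated with the reference ``see also Proposition 3.54 of \cite{Glaefke}'' and no argument is given in the text. So there is no in-paper proof to compare your proposal against.

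That said, your plan is the standard one for anisotropic approximation results of this type and matches what one would expect the cited thesis proof to do: split $f-\Pi_{\Delta t}\Pi_h f$ into a purely spatial and a purely temporal projection error, pass to the Laplace--Fourier side where the norm becomes $\int|\omega|^{2s}\|\hat f(\omega)\|_{l,\omega}^2\,d\omega$, apply fixed-frequency $L^2$-projection estimates (Bramble--Hilbert plus duality) with $\omega$-uniform constants, and then trade regularity between $|\omega|$ and $|\xi|$ to optimize the rate. You have correctly identified the one genuine technical point, namely the $\omega$-uniform stability and approximation of $\Pi_h$ in the weighted fractional spaces $H^l_\omega(\Gamma)$ for $|l|\le\tfrac12$; this is exactly the kind of estimate the paper later establishes for the related projection $\mathcal{Q}_h$ in Section~\ref{lower} (Lemma~\ref{pproj} and the preceding lemma), and the same argument works here. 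With that in hand the rest of your outline is routine. One small caution: the lemma as printed contains apparent typos (the stray $f$ in $\|f-f_{h,\Delta t}f\|$, and the pairing of the regularity indices $m,r$ with the polynomial degrees $q,p$ looks swapped), so when you carry out the optimization for $\alpha,\beta$ you should track which index governs space and which governs time rather than relying on the displayed constraints verbatim.
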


\section{A posteriori error estimates -- reliability}\label{apost}

\subsection{Dirichlet problem}

We recall the basic properties of the bilinear form $$B_{D}(\phi, \psi) = \int_{\mathbb{R}^+}\int_\Gamma \mathcal{V} \partial_t {\phi}(t,x)\ \psi(t,x)\ ds_x \, d_\sigma t$$ of the Dirichlet problem. \\

As shown in \cite{hd}, the bilinear form is continuous, and also weakly coercive:
\begin{proposition}\label{DPbounds} For every $\phi,\psi \in H^1_\sigma( \mathbb{R}^+, H^{-\frac{1}{2}}(\Gamma))$ there holds:
$$|B_{D}(\phi,\psi)| \lesssim \|\phi\|_{1,-\frac{1}{2},\Gamma, \ast} \|\psi\|_{1,-\frac{1}{2}, \Gamma,\ast}$$
and
$$\|\phi\|_{0,-\frac{1}{2},\Gamma,\ast}^2 \lesssim B_{D}(\phi,\phi) . $$
\end{proposition}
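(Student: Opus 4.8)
The plan is to pass to the Fourier--Laplace transform in time and to reduce both inequalities to frequency-domain estimates for the Helmholtz single layer operator, in the spirit of Bamberger and Ha Duong \cite{bh, hd}. Transforming in $t$ along the contour $\mathrm{Im}\,\omega=\sigma$ (writing $\omega=\xi+i\sigma$), the kernel $G$ becomes the Helmholtz fundamental solution with complex frequency $\omega$, so that $\widehat{\mathcal V\psi}(\omega)=V_\omega\hat\psi(\omega)$, where $V_\omega$ is the single layer boundary operator for $-\Delta-\omega^2$. Since $\partial_t$ corresponds to multiplication by $-i\omega$ and $\mathrm{Re}(-i\omega)=\sigma$ on the contour, Parseval's identity for the weighted transform turns the space--time form into a single frequency integral,
\begin{equation*}
B_{D}(\phi,\psi)=\frac{1}{2\pi}\int_{-\infty+i\sigma}^{+\infty+i\sigma} (-i\omega)\,\langle V_\omega\hat\phi(\omega),\ \hat\psi(\omega)\rangle_\Gamma\ d\omega\ .
\end{equation*}
Everything then reduces to two bounds for $V_\omega$ that are uniform along the contour.

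For the coercivity I would realise $V_\omega\hat\phi(\omega)=\gamma u_\omega$ through the frequency single layer potential $u_\omega$, which solves $(-\Delta-\omega^2)u_\omega=0$ in $\R^d\setminus\Gamma$ and has normal-derivative jump $\hat\phi(\omega)=[\partial_\nu u_\omega]$. A sesquilinear Green identity in $\R^d\setminus\Gamma$, together with $\mathrm{Re}(-i\omega)=\sigma$, then yields the energy identity
\begin{equation*}
B_{D}(\phi,\phi)=\frac{\sigma}{2\pi}\int_{-\infty+i\sigma}^{+\infty+i\sigma}\Big(\|\nabla u_\omega\|_{L^2(\R^d)}^2+|\omega|^2\|u_\omega\|_{L^2(\R^d)}^2\Big)\,d\omega\ \geq 0\ ,
\end{equation*}
which already exhibits $B_{D}(\phi,\phi)$ as a nonnegative quantity. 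A weighted trace/jump estimate $\|\hat\phi(\omega)\|_{-\frac12,\omega,\Gamma,\ast}^2\lesssim\|\nabla u_\omega\|_{L^2(\R^d)}^2+|\omega|^2\|u_\omega\|_{L^2(\R^d)}^2$ then bounds the integrand from below, and integrating gives $\|\phi\|_{0,-\frac12,\Gamma,\ast}^2\lesssim B_{D}(\phi,\phi)$; note that the absence of any surviving $|\omega|$-weight here is exactly what places the coercivity at time-regularity $r=0$. For continuity I would instead use the $\sigma$-explicit mapping bound $\|V_\omega u\|_{\frac12,\omega,\Gamma}\lesssim\frac{|\omega|}{\sigma}\|u\|_{-\frac12,\omega,\Gamma,\ast}$ underlying Theorem \ref{mapthm}; the integrand is then dominated by $\frac{|\omega|^2}{\sigma}\|\hat\phi(\omega)\|_{-\frac12,\omega,\Gamma,\ast}\|\hat\psi(\omega)\|_{-\frac12,\omega,\Gamma,\ast}$, and a Cauchy--Schwarz inequality in $\omega$ with the identification $\int|\omega|^2\|\hat\phi\|_{-\frac12,\omega,\Gamma,\ast}^2\,d\omega=\|\phi\|_{1,-\frac12,\Gamma,\ast}^2$ of Definition \ref{sobdef} gives the first claim. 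The extra factor $|\omega|$ from $\partial_t$ is precisely what raises both arguments to $r=1$, so that the continuity and coercivity norms differ by one time derivative.

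The main obstacle is establishing the two frequency-domain estimates with the correct, $\sigma$-uniform dependence on $|\omega|$. For coercivity the delicate point is the weighted jump estimate: one must verify, using that $u_\omega$ solves the Helmholtz equation, that the normal-derivative jump is controlled by the $\omega$-weighted $H^1$ energy with no leftover power of $|\omega|$, since any such power would shift the coercivity away from $r=0$ and destroy the stated estimate. For continuity the analogous task is the bound for $V_\omega$ that is linear in $|\omega|/\sigma$. Once these are in place, the remaining steps, namely the justification of Parseval for $\mathcal D'_+$-valued distributions on the weighted line and the equivalence of the $\omega$-weighted norms with the space--time norms of Definition \ref{sobdef}, are routine; the resulting one-derivative gap is the loss referred to after Theorem A.
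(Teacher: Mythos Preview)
Your proposal is correct and follows precisely the classical Bamberger--Ha Duong argument: the paper does not give its own proof of this proposition but simply cites \cite{hd}, and your frequency-domain reduction via the Fourier--Laplace transform, the energy identity obtained from Green's formula for the single layer potential $u_\omega$, and the weighted trace/jump estimate are exactly the ingredients of that reference. The only point worth tightening is the exact equality in your energy identity: what Green's formula yields pointwise in $\omega$ is $\mathrm{Re}\big[(-i\omega)\langle V_\omega\hat\phi,\overline{\hat\phi}\rangle\big]=\sigma(\|\nabla u_\omega\|^2+|\omega|^2\|u_\omega\|^2)$, and one then uses that $B_D(\phi,\phi)$ is real (for real $\phi$) to pass from the real part of the integrand to $B_D(\phi,\phi)$ itself.
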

Note the loss of a time derivative between the upper and lower estimates. Alternative inf-sup stable bilinear forms for the Dirichlet problem are the content of current work \cite{sut}.\\  

We consider a conforming Galerkin discretization of the Dirichlet problem \eqref{DP} in a subspace $V \subset H^1_\sigma(\mathbb{R}^+,\widetilde{H}^{-\frac{1}{2}}(\Gamma))$, which reads as follows: Find $\phi_{h,\Delta t} \in V$ such that
\begin{equation}\label{DPdisc}
B_{D}(\phi_{h,\Delta t},\psi_{h,\Delta t})=\langle \partial_t f,\psi_{h,\Delta t}\rangle\ ,
\end{equation}
for all $\psi_{h, \Delta t} \in V$. \\

The well-posedness of the continuous and discretized problems are a basic consequence of Proposition \ref{DPbounds}:
\begin{corollary}
Let $f \in H^2_\sigma(\mathbb{R}^+,H^{\frac{1}{2}}(\Gamma))$. Then the Dirichlet problem \eqref{DP} and its discretization \eqref{DPdisc} admit unique solutions $\phi \in H^1_\sigma(\mathbb{R}^+,\widetilde{H}^{-\frac{1}{2}}(\Gamma))$, $\phi_{h,\Delta t} \in V$, and the estimates $$\|\phi\|_{1,-\frac{1}{2},\Gamma,\ast}, \|\phi_{h,\Delta t}\|_{1,-\frac{1}{2},\Gamma,\ast} \lesssim \|f\|_{2,\frac{1}{2},\Gamma}$$ hold.
\end{corollary}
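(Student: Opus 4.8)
The plan is to read both assertions off the two bounds of Proposition~\ref{DPbounds}. Continuity of $B_D$ makes the right-hand side $\langle\partial_t f,\cdot\rangle$ a bounded functional and is what is needed for the Galerkin system to be well defined, while the weak coercivity $\|\psi\|_{0,-\frac12,\Gamma,\ast}^2\lesssim B_D(\psi,\psi)$ delivers uniqueness and the a~priori control. The one structural difficulty is the loss of a time derivative between these two bounds, which rules out a direct inf--sup argument in the energy norm $\|\cdot\|_{1,-\frac12,\Gamma,\ast}$. On the continuous level I would absorb this loss by a time-differentiation device, using that $\mathcal V$ is a convolution in time; on the discrete level it is the source of the main obstacle below.

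For the discretized problem~\eqref{DPdisc} I would argue algebraically. As $V$ is finite dimensional, \eqref{DPdisc} is a square linear system, so unique solvability is equivalent to injectivity. If the data vanish, testing with $\psi_{h,\Delta t}=\phi_{h,\Delta t}$ and using coercivity gives $\|\phi_{h,\Delta t}\|_{0,-\frac12,\Gamma,\ast}^2\lesssim B_D(\phi_{h,\Delta t},\phi_{h,\Delta t})=0$, hence $\phi_{h,\Delta t}=0$. For the continuous problem I would pass to the frequency domain: Plancherel along $\mathrm{Im}\,\omega=\sigma$ turns $B_D$ into a frequency-integrated form whose integrand is the sesquilinear form of the symbol $\widehat{\mathcal V}(\omega)$, and Proposition~\ref{DPbounds} is the integrated version of a pointwise coercivity of $\widehat{\mathcal V}(\omega)$. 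The Lax--Milgram lemma then solves $\widehat{\mathcal V}(\omega)\hat\phi(\omega)=\hat f(\omega)$ for each such $\omega$; holomorphy of $\omega\mapsto\hat\phi(\omega)$ together with the frequency bounds and a Paley--Wiener argument places $\phi$ in $H^1_\sigma(\mathbb R^+,\widetilde H^{-\frac12}(\Gamma))$ and gives uniqueness, as in \cite{bh,hd}.

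It remains to prove the a~priori estimates. Testing the solution against itself yields at once the lower-order bound, since $\|\phi\|_{0,-\frac12,\Gamma,\ast}^2\lesssim B_D(\phi,\phi)=\langle\partial_t f,\phi\rangle\leq\|\partial_t f\|_{0,\frac12,\Gamma}\,\|\phi\|_{0,-\frac12,\Gamma,\ast}$ by the $H^{\frac12}$--$\widetilde H^{-\frac12}$ duality, whence $\|\phi\|_{0,-\frac12,\Gamma,\ast}\lesssim\|f\|_{1,\frac12,\Gamma}$. To reach the stated norm I would use that $\mathcal V$ commutes with $\partial_t$, so that $\partial_t\phi$ again solves \eqref{DP} with data $\partial_t f$; applying coercivity to $\partial_t\phi$ and bounding the resulting right-hand side by $\|\partial_t^2 f\|_{0,\frac12,\Gamma}\,\|\partial_t\phi\|_{0,-\frac12,\Gamma,\ast}$ gives $\|\partial_t\phi\|_{0,-\frac12,\Gamma,\ast}\lesssim\|\partial_t^2 f\|_{0,\frac12,\Gamma}$. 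Since $\|\phi\|_{1,-\frac12,\Gamma,\ast}=\|\partial_t\phi\|_{0,-\frac12,\Gamma,\ast}$ and $\|\partial_t^2 f\|_{0,\frac12,\Gamma}=\|f\|_{2,\frac12,\Gamma}$ by Plancherel, this is exactly $\|\phi\|_{1,-\frac12,\Gamma,\ast}\lesssim\|f\|_{2,\frac12,\Gamma}$ (the regularity needed to test with $\partial_t\phi$ is provided by the frequency representation, or by a density argument).

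The main obstacle is the same energy bound for the Galerkin solution. The differentiation device is unavailable discretely, because $\partial_t\phi_{h,\Delta t}$ is in general not an admissible test function in $V$ --- for instance the temporal derivative of a cubic spline in $W_{\Delta t}$ is only a quadratic spline --- so the Galerkin identity cannot be tested against it and coercivity alone yields only the lower-order bound $\|\phi_{h,\Delta t}\|_{0,-\frac12,\Gamma,\ast}\lesssim\|f\|_{1,\frac12,\Gamma}$. I expect this to be the delicate point: to obtain the stated $\|\phi_{h,\Delta t}\|_{1,-\frac12,\Gamma,\ast}\lesssim\|f\|_{2,\frac12,\Gamma}$ uniformly in the mesh I would transport the frequency-domain coercivity to the discrete solution, as in the discretization analysis of \cite{hd}, rather than rely on Proposition~\ref{DPbounds} directly.
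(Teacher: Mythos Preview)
The paper does not actually prove this corollary: it merely asserts that ``the well-posedness of the continuous and discretized problems are a basic consequence of Proposition~\ref{DPbounds}'' and states the result. Your analysis is therefore considerably more careful than what the paper supplies.

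Your argument for the continuous problem is the standard one and is correct: the frequency-domain Lax--Milgram argument together with the commutation of $\mathcal V$ with $\partial_t$ yields both existence and the energy bound $\|\phi\|_{1,-\frac12,\Gamma,\ast}\lesssim\|f\|_{2,\frac12,\Gamma}$, exactly as in the references \cite{bh,hd} the paper cites. Your treatment of existence and uniqueness in the discrete case via coercivity is also fine.

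Where you go further than the paper is in isolating the discrete a~priori bound $\|\phi_{h,\Delta t}\|_{1,-\frac12,\Gamma,\ast}\lesssim\|f\|_{2,\frac12,\Gamma}$ as the delicate step. You are right that Proposition~\ref{DPbounds} alone gives only $\|\phi_{h,\Delta t}\|_{0,-\frac12,\Gamma,\ast}\lesssim\|f\|_{1,\frac12,\Gamma}$, and that the time-differentiation device is unavailable because $\partial_t V\not\subset V$ in general. The paper itself notes the ``loss of a time derivative between the upper and lower estimates'' immediately after Proposition~\ref{DPbounds}, yet does not explain how this loss is overcome in the corollary; it appears simply to defer to the literature. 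Your suggestion to transport the frequency-domain coercivity to the discrete level, following \cite{hd}, is the natural route and is what the cited references actually do. In short: your proposal is sound, and the gap you flag is real---it is present in the paper's exposition as well, not in your argument.
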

We note the Galerkin orthogonality:
\begin{equation*}
B_{D}(\phi-\phi_{h,\Delta t},\psi_{h,\Delta t})=0 \qquad \forall \psi_{h,\Delta t} \in V\,.
\end{equation*}
Using ideas going back to Carstensen \cite{cc} and Carstensen and Stephan \cite{cs} for the boundary element method for elliptic problems, we obtain an a posteriori error estimate for the Galerkin solution to the Dirichlet problem on globally quasi-uniform meshes.

\begin{theorem}\label{DPreliable}
Given $V = V^{p,q}_{h, \Delta t}$ associated to a globally quasi-uniform discretization of $\Gamma$, let $\phi \in H^1_\sigma(\mathbb{R}^+,\widetilde{H}^{-\frac{1}{2}}(\Gamma))$, $\phi_{h,\Delta t} \in V$ the solutions to \eqref{DP}, resp.~\eqref{DPdisc}. Assume that $\mathcal{R} = \partial_t{f} - \mathcal{V} \partial_t{\phi}_{h,\Delta t}\in H^0_\sigma(\mathbb{R}^+,H^{1}(\Gamma))$. Then
\begin{align*}
 \|\phi-\phi_{h,\Delta t}\|_{0,-\frac{1}{2},\Gamma,\ast}^2 &\lesssim \|\textcolor{black}{\mathcal{R}}\|_{0,1,\Gamma}\big(
\Delta t \|\partial_t \mathcal{R}\|_{0,0,\Gamma} + \|h \cdot \nabla \mathcal{R}\|_{0,0, \Gamma} \big)\\
& \lesssim \max\{\Delta t, h\} (\|\partial_t \mathcal{R}\|_{0,0,\Gamma}^2 + \|\nabla \mathcal{R}\|_{0,0,\Gamma}^2)\ .
\end{align*}
\end{theorem}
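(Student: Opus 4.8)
The plan is to run the standard a posteriori machinery of Carstensen and Stephan in the space-time setting: reduce the error to a dual norm of the residual via weak coercivity, interpolate the spatial Sobolev index, and finally convert an $L^2$-type norm of the residual into mesh-weighted seminorms using Galerkin orthogonality. Set $e = \phi - \phi_{h,\Delta t}$. Since $\phi$ solves \eqref{DP}, we have $\mathcal{V}\partial_t\phi = \partial_t f$, hence $\mathcal{R} = \partial_t f - \mathcal{V}\partial_t\phi_{h,\Delta t} = \mathcal{V}\partial_t e$, so that $B_D(e,e) = \langle \mathcal{V}\partial_t e, e\rangle_\sigma = \langle \mathcal{R}, e\rangle_\sigma$. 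The lower estimate of Proposition~\ref{DPbounds}, followed by the duality pairing of $H^0_\sigma(\mathbb{R}^+,H^{\frac12}(\Gamma))$ with $H^0_\sigma(\mathbb{R}^+,\widetilde H^{-\frac12}(\Gamma))$, then gives
\begin{equation*}
\|e\|_{0,-\frac12,\Gamma,\ast}^2 \lesssim B_D(e,e) = \langle \mathcal{R}, e\rangle_\sigma \leq \|\mathcal{R}\|_{0,\frac12,\Gamma}\,\|e\|_{0,-\frac12,\Gamma,\ast}\ ,
\end{equation*}
so after dividing it remains to bound $\|\mathcal{R}\|_{0,\frac12,\Gamma}$.

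Second, I would interpolate the spatial index. At each frequency $\omega$ the $\omega$-weighted norms obey $\|\hat{\mathcal{R}}(\omega)\|_{\frac12,\omega,\Gamma}^2 \lesssim \|\hat{\mathcal{R}}(\omega)\|_{1,\omega,\Gamma}\,\|\hat{\mathcal{R}}(\omega)\|_{0,\omega,\Gamma}$ with a constant uniform in $\omega$ (Cauchy--Schwarz against $(|\omega|^2+|\xi|^2)^{1/2}$ on the Fourier side, transported to $\Gamma$ through the partition of unity). Integrating in $\omega$ and a further Cauchy--Schwarz in $\omega$ yield
\begin{equation*}
\|\mathcal{R}\|_{0,\frac12,\Gamma}^2 \lesssim \|\mathcal{R}\|_{0,1,\Gamma}\,\|\mathcal{R}\|_{0,0,\Gamma}\ .
\end{equation*}
Combined with the first step this already reproduces the structure of the theorem, with $\|\mathcal{R}\|_{0,0,\Gamma}$ standing in for the bracketed factor.

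Third, and this is the crux, I would turn $\|\mathcal{R}\|_{0,0,\Gamma}$ into a mesh-weighted quantity. Galerkin orthogonality $B_D(e,\psi_{h,\Delta t})=0$ gives $\langle \mathcal{R},\psi_{h,\Delta t}\rangle_\sigma = 0$ for all $\psi_{h,\Delta t}\in V$, so the $\langle\cdot,\cdot\rangle_\sigma$-orthogonal projection $\Pi^\sigma$ onto $V$ annihilates $\mathcal{R}$. Using that $\Pi^\sigma\mathcal{R}$ is the best $\langle\cdot,\cdot\rangle_\sigma$-approximation of $\mathcal{R}$ in $V$, I may insert the unweighted tensor-product projection $\Pi_{\Delta t}\Pi_h\mathcal{R}$ as a competitor,
\begin{equation*}
\|\mathcal{R}\|_{0,0,\Gamma} = \|\mathcal{R}-\Pi^\sigma\mathcal{R}\|_{0,0,\Gamma} \leq \|\mathcal{R}-\Pi_{\Delta t}\Pi_h\mathcal{R}\|_{0,0,\Gamma}\ ,
\end{equation*}
and then apply the anisotropic first-order $L^2$-estimate for $\Pi_{\Delta t}\Pi_h$ (element-wise Bramble--Hilbert/Poincar\'e via $I-\Pi_{\Delta t}\Pi_h = (I-\Pi_{\Delta t}) + \Pi_{\Delta t}(I-\Pi_h)$, cf.~Lemma~\ref{interp}), which yields $\|\mathcal{R}\|_{0,0,\Gamma} \lesssim \Delta t\,\|\partial_t\mathcal{R}\|_{0,0,\Gamma} + \|h\cdot\nabla\mathcal{R}\|_{0,0,\Gamma}$. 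Chaining the three displays gives the first asserted inequality. The second follows routinely: bound each of $\Delta t\,\|\partial_t\mathcal{R}\|_{0,0,\Gamma}$ and $\|h\cdot\nabla\mathcal{R}\|_{0,0,\Gamma}$ by $\max\{\Delta t,h\}$ times the respective seminorm, bound $\|\mathcal{R}\|_{0,1,\Gamma}\lesssim \|\partial_t\mathcal{R}\|_{0,0,\Gamma}+\|\nabla\mathcal{R}\|_{0,0,\Gamma}$ (the zero-order contribution being absorbed through the same Poincar\'e estimate since the mesh sizes are bounded), and apply Young's inequality.

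I expect the third step to be the main obstacle, precisely the interplay between the $\sigma$-weighted inner product in which orthogonality holds and the essentially unweighted anisotropic approximation estimate. The best-approximation insertion above is designed to sidestep this, but one must still verify that measuring the \emph{unweighted} projection error in the \emph{weighted} norm $\|\cdot\|_{0,0,\Gamma}$ produces the separate first powers $\Delta t$ and $h$ rather than a single joint mesh factor; the only delicate point is that $e^{-2\sigma t}$ varies across a time slab $[t_{n-1},t_n)$, which is controllable because the weight is slowly varying on the scale $\Delta t$. A secondary technical issue is the reproduction of constants in time near $t=0$, where the discrete spaces are constrained to vanish; here causality, $\mathcal{R}=0$ for small $t$, should be used to close the local estimate.
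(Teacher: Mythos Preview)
Your proposal is correct and follows essentially the same route as the paper: weak coercivity of $B_D$ reduces the squared error to $\langle\mathcal{R},e\rangle_\sigma$, duality plus spatial interpolation $\|\mathcal{R}\|_{0,1/2}^2\leq \|\mathcal{R}\|_{0,0}\|\mathcal{R}\|_{0,1}$, and then residual orthogonality $\langle\mathcal{R},\psi_{h,\Delta t}\rangle_\sigma=0$ together with the approximation estimate for $\Pi_{\Delta t}\Pi_h$ convert $\|\mathcal{R}\|_{0,0}$ into the mesh-weighted seminorms. The paper carries an auxiliary test function $\psi_{h,\Delta t}$ through the first display but then sets it back to $\phi_{h,\Delta t}$, so your direct cancellation is equivalent; your careful discussion of the $\sigma$-weighted versus unweighted projection is a point the paper simply absorbs into its Lemma~\ref{interp}, which is already stated in the weighted norms.
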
\textcolor{black}{\begin{rem} The estimate generalizes to arbitrary subspaces $V$ in place of $V^{p,q}_{h, \Delta t}$, in particular discretizations with smooth ansatz functions in time are of interest \cite{sv}.\\
a) With the endpoint estimate in Theorem \ref{mapthm2}, the single--layer potential maps $H^{1}_\sigma(\mathbb{R}^+, L^2(\Gamma))$ continuously to $H^{0}_\sigma(\mathbb{R}^+, H^{1}(\Gamma))$, and $\mathcal{V} \dot{\phi}_{h,\Delta t}$ belongs to $H^0_\sigma(\mathbb{R}^+,H^{1}(\Gamma))$ if, for example, $\phi_{h,\Delta t}\in H^{2}_\sigma(\mathbb{R}^+, L^2(\Gamma))$. The a posteriori estimate is therefore valid for discretizations by piecewise constant functions in space and $C^1$--continuous splines in time. In practice, as noted in \cite{jr}, the loss of time derivatives in the mapping properties of Theorems \ref{mapthm} and \ref{mapthm2} is not sharp, and $\mathcal{R} \in H^0_\sigma(\mathbb{R}^+,H^1(\Gamma))$ can also be expected for lower-order discretizations in time.\\
b) In practice, we will here use $\Delta t \|\partial_t \mathcal{R}\|_{0,0,\Gamma} + \|h \cdot \nabla \mathcal{R}\|_{0,0, \Gamma}$ as an error indicator.\end{rem}}
\begin{proof}
We first note that for all $\psi_{h,\Delta t} \in V^{p,q}_{h,\Delta t}$
\begin{align*}
&\|\phi-\phi_{h,\Delta t}\|_{0,-\frac{1}{2},\Gamma, \ast}^2\lesssim B_{D}(\phi-\phi_{h,\Delta t},\phi-\phi_{h,\Delta t}) \\
& = \int_{\mathbb{R}^+}\int_\Gamma \partial_t{f} (\phi-\phi_{h,\Delta t})\ ds_x\ d_\sigma t -  B_{D}(\phi_{h,\Delta t},\phi-\phi_{h,\Delta t}) \\
& =\int_{\mathbb{R}^+} \int_\Gamma \partial_t{f} (\phi-\psi_{h,\Delta t})\ ds_x \ d_\sigma t -  B_{D}(\phi_{h,\Delta t},\phi-\psi_{h,\Delta t})  \\
& = \int_{\mathbb{R}^+}\int_\Gamma (\partial_t{f} - \mathcal{V} \partial_t{\phi}_{h,\Delta t}) (\phi-\psi_{h,\Delta t})\ ds_x \ d_\sigma t\ .
\end{align*}
The last term may be estimated by:
\begin{align*}
& \int_{\mathbb{R}^+} \int_\Gamma (\partial_t{f} - \mathcal{V} \dot{\phi}_{h,\Delta t}) (\phi-\psi_{h,\Delta t})\ ds_x \ d_\sigma t \leq \ \|\mathcal{R}\|_{0, \frac{1}{2},\Gamma} \|\phi-\psi_{h,\Delta t}\|_{0, -\frac{1}{2},\Gamma, \ast}\ .
\end{align*}
We use $\psi_{h,\Delta t} = \phi_{h,\Delta t}$ together with the interpolation inequality
$$\|\mathcal{R}\|^2_{0, \frac{1}{2},\Gamma}\leq \|\mathcal{R}\|_{0,0,\Gamma}\|\mathcal{R}\|_{0,1,\Gamma}\ .$$
As the residual is perpendicular to $V^{p,q}_{h,\Delta t}$,
\begin{align*}
\|\mathcal{R}\|_{0,0,\Gamma}^2 &= \langle \mathcal{R},\mathcal{R}\rangle = \langle \mathcal{R},\mathcal{R}-\widetilde{\psi}_{h,\Delta t}\rangle\leq \|\mathcal{R}\|_{0,0,\Gamma}\|\mathcal{R}-\widetilde{\psi}_{h,\Delta t}\|_{0,0,\Gamma}
\end{align*}
for all $\widetilde{\psi}_{h,\Delta t} \in V^{p,q}_{h,\Delta t}$, we obtain
$$\|\mathcal{R}\|_{0,0,\Gamma} \leq \inf\{\|\mathcal{R}-\widetilde{\psi}_{h,\Delta t}\|_{0,0,\Gamma}: \widetilde{\psi}_{h,\Delta t} \in V^{p,q}_{h,\Delta t}\}\ .$$
Choosing $\widetilde{\psi}_{h,\Delta t} = \Pi_{\Delta t}\Pi_h \mathcal{R}$, based on the interpolation operator defined earlier, we obtain
$$\|\mathcal{R}\|_{0,0,\Gamma}\lesssim \Delta t \|\partial_t \mathcal{R}\|_{0,0,\Gamma} + \|h \cdot \nabla \mathcal{R}\|_{0,0,\Gamma} \ .$$
The theorem follows.
\end{proof}

\subsection{Acoustic boundary problems}\label{acousticsection}

Recall the wave equation with inhomogeneous acoustic boundary conditions
$$\partial_t^2 u - \Delta u = 0\ \text{ on $\mathbb{R}_+\times \mathbb{R}^d \setminus \overline{\Omega}$ },\ \partial_\nu u -\alpha \partial_t u = f\ \text{ on $\mathbb{R}_+\times \Gamma$ }, \quad u=0\ \text{ for }\ t\leq 0\ .$$
For scattering problems $f = - \partial_\nu u_{inc}+\alpha \partial_t u_{inc}$ is determined from an incoming wave $u_{inc}$.  

For a finite or infinite time interval $[0,T]$ we introduce the bilinear form
\begin{equation}\label{eq:bilinear_recall}
 a_T((\varphi,p),(\psi,q))=\int_0^T \int_\Gamma \left( \alpha \dot{\varphi} \dot{\psi}  +  \frac{1}{\alpha} p q +  \mathcal{K}' p \dot{\psi}
- \mathcal{W} \varphi \dot{\psi} + \mathcal{V} \dot{p} q +K \dot{\varphi} q \right) ds_x\, dt \,.
\end{equation}
With
\begin{equation}\label{eq:rechte_recall}
 l(\psi, q)=\int_0^T \int_\Gamma F \dot{\psi} \,ds_x\, dt+\int_0^T \int_\Gamma \frac{G q}{\alpha}\, ds_x\, dt\,,
\end{equation}
where $F = -2\partial_\nu u_{inc}$, $G=-2\alpha \partial_t u_{inc}$, we consider the variational formulation for the wave equation in $\mathbb{R}^d$ with acoustic boundary conditions on $\Gamma$:\\

\noindent Find $(\varphi,p) \in H^1([0,T],\widetilde{H}^{\frac{1}{2}}(\Gamma)) \times H^1([0,T],L^2(\Gamma))$ such that
\begin{equation}\label{eq:acoustic_recall}
a_T((\varphi,p), (\psi,q)) = l(\psi, q)
\end{equation}
for all $(\psi,q)\in H^1([0,T],{H}^{\frac{1}{2}}(\Gamma)) \times H^1([0,T],L^2(\Gamma))$.\\

{Note that $\sigma$ may be set to $0$ in the definition of the Sobolev spaces when $T<\infty$}. The acoustic problem is equivalent to the wave equation with acoustic boundary conditions \cite{hd}. Its discretization reads:\\

\noindent Find $(\varphi_{h,\Delta t}, p_{h,\Delta t}) \in V^{p_1,q_1}_{h, \Delta t} \times V^{p_2,q_2}_{h, \Delta t}$ such that
\begin{equation}\label{eq:acoustic_recallh}
a_T((\varphi_{h,\Delta t},p_{h,\Delta t}), (\psi_{h,\Delta t},q_{h,\Delta t})) = l(\psi_{h,\Delta t}, q_{h,\Delta t})
\end{equation}
for all $(\psi_{h,\Delta t},q_{h,\Delta t})\in V^{p_1,q_1}_{h, \Delta t} \times V^{p_2,q_2}_{h, \Delta t}$.}\\

The following well-posedness holds: 
\begin{proposition}
Let $F \in H^2([0,T],H^{-\frac{1}{2}}(\Gamma))$, $G \in H^1([0,T],H^{0}(\Gamma))$. Then the weak form \eqref{eq:acoustic_recall} of the acoustic problem  and its discretization \eqref{eq:acoustic_recallh} admit unique solutions $(\varphi,p)\in  H^1([0,T],\widetilde{H}^{\frac{1}{2}}(\Gamma))\times H^{1}([0,T], L^2(\Gamma))$, resp.~$(\varphi_{h,\Delta t},p_{h,\Delta t})\in V^{p_1,q_1}_{h, \Delta t} \times V^{p_2,q_2}_{h, \Delta t}$, which depend continuously on the data.
\end{proposition}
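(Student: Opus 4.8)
The plan is to deduce both existence statements from the continuity and the (weakly) coercive lower bound of the bilinear form $a_T$ on the energy space
$$\mathcal{X} = H^1([0,T],\widetilde{H}^{\frac{1}{2}}(\Gamma)) \times H^1([0,T],L^2(\Gamma))\ ,$$
combined with the abstract existence theorem of Bamberger and Ha Duong \cite{bh, hd} for time-dependent variational problems, exactly as the Corollary following Proposition \ref{DPbounds} is obtained in the Dirichlet case. (For $T=\infty$ one works with $\sigma>0$ and the weighted spaces of Definition \ref{sobdef}; for $T<\infty$ one may take $\sigma=0$.) The continuity $|a_T((\varphi,p),(\psi,q))|\lesssim \|(\varphi,p)\|_{\mathcal{X}}\,\|(\psi,q)\|_{\mathcal{X}}$ follows termwise: the local terms $\alpha\dot\varphi\dot\psi$ and $\tfrac1\alpha pq$ are controlled using $\alpha,\alpha^{-1}\in L^\infty(\Gamma)$ and Cauchy--Schwarz, while the four terms involving $\mathcal{V},\mathcal{K},\mathcal{K}',\mathcal{W}$ are bounded by the mapping properties of Theorem \ref{mapthm} together with the duality pairings of $\widetilde{H}^{\frac12}$ with $H^{-\frac12}$, resp.~$\widetilde{H}^{-\frac12}$ with $H^{\frac12}$. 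The assumptions $F\in H^2([0,T],H^{-\frac12}(\Gamma))$ and $G\in H^1([0,T],H^0(\Gamma))$ ensure that $l$ is a bounded linear functional on the test space.

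The main step, and the principal obstacle, is the coercivity estimate. The plan is to insert the Ha Duong test function associated to the solution, i.e.~essentially $(\psi,q)=(\varphi,p)$, so that the local terms yield the manifestly nonnegative contribution $\int_0^T\int_\Gamma\big(\alpha|\dot\varphi|^2+\tfrac1\alpha|p|^2\big)\,ds_x\,dt$, and to bound the remaining part
$$\int_0^T\int_\Gamma \big(\mathcal{K}'p\,\dot\varphi - \mathcal{W}\varphi\,\dot\varphi + \mathcal{V}\dot p\, p + \mathcal{K}\dot\varphi\, p\big)\,ds_x\,dt$$
from below. These terms assemble the layer potentials into the transient Calder\'on block $\left(\begin{smallmatrix}-\mathcal{W} & \mathcal{K}'\\ \mathcal{K} & \mathcal{V}\end{smallmatrix}\right)$. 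Passing to the Laplace domain and applying Plancherel's theorem, the resulting frequency integral is estimated from below by the frequency-domain positivity of this block, the transient analogue of the coercivity of the symmetric boundary integral formulation for the Laplace equation, established in \cite{bh, hd}. The delicate point is the off-diagonal coupling through $\mathcal{K}$ and $\mathcal{K}'$: one exploits that $\mathcal{K}'$ is the spatial adjoint of $\mathcal{K}$, so that these contributions form the symmetric part of the block operator, whose real part is controlled uniformly in the Laplace variable down to $\mathrm{Re}\,s=\sigma$. As in Proposition \ref{DPbounds} and in the hyperbolic setting generally, the lower bound comes with a loss of one time derivative, so that $a_T((\varphi,p),(\varphi,p))$ dominates the $L^2$-in-time norm of $\varphi$ in $\widetilde{H}^{\frac12}(\Gamma)$ and of $p$ in $L^2(\Gamma)$ rather than the full $H^1$-in-time energy norm.

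With continuity and this coercivity at hand, existence and uniqueness of $(\varphi,p)\in\mathcal{X}$ together with the stability bound $\|(\varphi,p)\|_{\mathcal{X}}\lesssim \|F\|_{2,-\frac12}+\|G\|_{1,0}$ follow from the existence theorem of \cite{bh, hd}, precisely as for the Dirichlet problem. For the discretization, the key observation is that the conforming choice $V^{p_1,q_1}_{h,\Delta t}\times V^{p_2,q_2}_{h,\Delta t}\subset\mathcal{X}$ makes the coercivity estimate hold verbatim on the finite-dimensional subspace. Consequently the square Galerkin system \eqref{eq:acoustic_recallh} has trivial kernel and is uniquely solvable, and the same coercivity bound provides the a priori estimate on $(\varphi_{h,\Delta t},p_{h,\Delta t})$, which establishes the continuous dependence on the data. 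I expect the frequency-domain positivity of the coupling terms to be the only nontrivial ingredient; the remaining arguments are routine adaptations of the Dirichlet case.
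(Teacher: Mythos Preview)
The paper does not give a detailed proof of this proposition; it refers to \cite{hd} and records only the key coercivity ingredient immediately afterwards. That ingredient, however, is obtained in a quite different way from what you propose. The paper invokes the \emph{energy identity} (Equation (64) of \cite{hd}),
\[
a_{T}((\varphi,p),(\varphi,p)) \;=\; 2E(T) \;+\; \int_0^T\!\!\int_\Gamma\Big(\alpha|\dot\varphi|^2+\tfrac1\alpha\,p^2\Big)\,ds_x\,dt,
\]
where $E(T)\ge 0$ is the physical energy of the associated wave field at time $T$. The layer-potential terms thus combine, via the representation formula, into a manifestly nonnegative quantity, and coercivity
$\|\dot\varphi\|_{0,0,\Gamma}^2+\|p\|_{0,0,\Gamma}^2\lesssim a_T((\varphi,p),(\varphi,p))$
follows immediately, in the time domain and on the finite interval $[0,T]$.

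Your route---Plancherel in the Laplace variable and frequency-wise positivity of the Calder\'on block---is the natural argument for the Dirichlet problem on $\mathbb{R}^+$ with $\sigma>0$, but it does not transfer cleanly to the present setting. On a finite interval with $\sigma=0$ there is no Plancherel identity; extending $(\varphi,p)$ by zero to $\mathbb{R}^+$ does not commute with the retarded potentials, so the integral you want to bound from below is not the one Plancherel gives you. The energy-identity proof sidesteps this entirely. A second discrepancy: the lower bound you announce controls $\|\varphi\|_{0,1/2}$, whereas the paper's coercivity controls $\|\dot\varphi\|_{0,0}$ (and $\|p\|_{0,0}$); these are not the same norm, and the energy identity does not directly yield spatial $H^{1/2}$ control of $\varphi$.

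In short, the overall architecture (continuity + weak coercivity + abstract existence as in \cite{bh,hd}, then conformity for the Galerkin scheme) is right, but the coercivity step should be replaced by the energy identity of \cite{hd}, which is both simpler and valid on $[0,T]$ without weights.
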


We specifically note that the bilinear form $a_T$ satisfies a (weaker) coercivity estimate:
$$
\|p\|^2_{0,0,\Gamma}+\|\dot{\varphi}\|_{0,0,\Gamma}^2 \lesssim a_{T}((\varphi,p),(\varphi,p))\ .
$$
This follows from Equation (64) of \cite{hd}, 
$$ a_{T}((\varphi,p),(\varphi,p)) = 2E(T) + \int_0^T \int_\Gamma \left( \alpha (\partial_t{\varphi}) (\partial_t{\varphi})  + \frac{1}{\alpha} p^2 \right) ds_x\, dt \ ,$$
where $E(T)= \frac{1}{2}\int_{\mathbb{R}^d \setminus \overline{\Omega}} \left\{(\partial_t u)^2 + (\nabla u)^2\right\}  dx$ is the total energy at time $T$.

We state a simple a posteriori estimate.

\begin{theorem}
Let $(\varphi,p) \in H^1([0,T],\widetilde{H}^{\frac{1}{2}}(\Gamma))\times H^{1}([0,T], L^2(\Gamma))$ be the solution to \eqref{eq:acoustic_recall}, $(\varphi_{h,\Delta t},p_{h,\Delta t})\in V^{p_1,q_1}_{h, \Delta t} \times V^{p_2,q_2}_{h, \Delta t}$ the solution to the discretization \eqref{eq:acoustic_recallh}. Assume that
\begin{align*}
R_1 &= F - \alpha \dot{\varphi}_{h,\Delta t}+2\mathcal{K}'p_{h,\Delta t} - 2\mathcal{W}\varphi_{h,\Delta t}\in L^2([0,T],L^2(\Gamma))\ ,\\
R_2 &= G + \alpha^{-1}p_{h,\Delta t}+2\mathcal{V}\dot{p}_{h,\Delta t} - 2\mathcal{K}\dot{\varphi}_{h,\Delta t}\in L^2([0,T],L^2(\Gamma))\ .
\end{align*}
Then 
\begin{align*}
 & \|p-p_{h,\Delta t}\|_{0,0,\Gamma}+\|\dot{\varphi} -\dot{\varphi}_{h,\Delta t}\|_{0,0,\Gamma} \lesssim \|R_1\|_{0,0,\Gamma} + \|R_2\|_{0,0,\Gamma}\ .
\end{align*}
\end{theorem}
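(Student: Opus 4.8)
The plan is to run the same coercivity-plus-residual argument as in the proof of Theorem \ref{DPreliable}, but now for the saddle-point-type bilinear form $a_T$ and \emph{without} any mesh-dependent interpolation step, since no powers of $h$ or $\Delta t$ appear in the claimed bound. Write $e_\varphi = \varphi - \varphi_{h,\Delta t}$ and $e_p = p - p_{h,\Delta t}$. Both the exact and the discrete pair lie in the energy space, and the conformity of $V^{p_1,q_1}_{h,\Delta t}\times V^{p_2,q_2}_{h,\Delta t}$ means $(e_\varphi, e_p)$ is itself an admissible test function for \eqref{eq:acoustic_recall}. First I would apply the weak coercivity estimate recalled above to $(e_\varphi, e_p)$, which is legitimate because $0<\alpha,\alpha^{-1}\in L^\infty(\Gamma)$ and $E(T)\geq 0$, to obtain
\begin{equation*}
\|e_p\|_{0,0,\Gamma}^2 + \|\dot{e}_\varphi\|_{0,0,\Gamma}^2 \lesssim a_T((e_\varphi, e_p),(e_\varphi, e_p))\ .
\end{equation*}

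Next I would linearise. By bilinearity and by the continuous equation \eqref{eq:acoustic_recall} tested against $(e_\varphi, e_p)$,
\begin{equation*}
a_T((e_\varphi, e_p),(e_\varphi, e_p)) = l(e_\varphi, e_p) - a_T((\varphi_{h,\Delta t}, p_{h,\Delta t}),(e_\varphi, e_p))\ .
\end{equation*}
In contrast to the Dirichlet case, Galerkin orthogonality is not needed here: the exact equation alone turns the energy of the error into a residual functional. Expanding $l$ and $a_T$ and collecting the terms multiplying $\dot{e}_\varphi$ and those multiplying $e_p$, the right-hand side becomes the $H^0_\sigma(\mathbb{R}^+,L^2(\Gamma))$-duality pairing of two residual quantities against $\dot{e}_\varphi$ and $e_p$ respectively. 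Up to the $L^\infty(\Gamma)$-bounded weight $\alpha^{\pm 1}$ and the normalisation coming from the jump relations of $\mathcal{K}$ and $\mathcal{W}$, these agree with $R_1$ and $R_2$, whose norms are exactly the assumed finite quantities. Two applications of Cauchy--Schwarz then give
\begin{equation*}
a_T((e_\varphi, e_p),(e_\varphi, e_p)) \lesssim \|R_1\|_{0,0,\Gamma}\,\|\dot{e}_\varphi\|_{0,0,\Gamma} + \|R_2\|_{0,0,\Gamma}\,\|e_p\|_{0,0,\Gamma}\ .
\end{equation*}

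Combining the two displays and using the elementary inequality $ab+cd \leq (a^2+c^2)^{1/2}(b^2+d^2)^{1/2}$ yields
\begin{equation*}
\|e_p\|_{0,0,\Gamma}^2 + \|\dot{e}_\varphi\|_{0,0,\Gamma}^2 \lesssim \big(\|R_1\|_{0,0,\Gamma}^2 + \|R_2\|_{0,0,\Gamma}^2\big)^{1/2}\big(\|e_p\|_{0,0,\Gamma}^2 + \|\dot{e}_\varphi\|_{0,0,\Gamma}^2\big)^{1/2}\ .
\end{equation*}
Dividing by the last factor (which may be assumed nonzero) and bounding $(x^2+y^2)^{1/2}\leq |x|+|y|$ produces the asserted reliability estimate $\|p-p_{h,\Delta t}\|_{0,0,\Gamma}+\|\dot{\varphi}-\dot{\varphi}_{h,\Delta t}\|_{0,0,\Gamma}\lesssim\|R_1\|_{0,0,\Gamma}+\|R_2\|_{0,0,\Gamma}$.

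I expect the only genuine obstacle to be the careful identification of the collected residual terms with $R_1$ and $R_2$: the bilinear form $a_T$ is the symmetric energy form, whereas $R_1,R_2$ are written in boundary-integral-equation form, so reconciling the signs and the factors $2$ requires the jump relations of the double-layer and hypersingular operators together with the normalisations $F = -2\partial_\nu u_{inc}$, $G = -2\alpha\partial_t u_{inc}$. Everything else --- coercivity, admissibility of $(e_\varphi, e_p)$ as a test function, and the two Cauchy--Schwarz steps --- is routine, and crucially no interpolation estimate or mesh regularity enters, since the bound is stated purely in $L^2$ norms.
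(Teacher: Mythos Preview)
Your proposal is correct and follows essentially the same route as the paper: coercivity of $a_T$ on the error pair, rewriting the energy as a residual pairing, and two applications of Cauchy--Schwarz followed by division. The only cosmetic difference is that the paper first passes through Galerkin orthogonality to an arbitrary discrete test pair $(\psi_{h,\Delta t},q_{h,\Delta t})$ and then immediately chooses $(\psi_{h,\Delta t},q_{h,\Delta t})=(\varphi_{h,\Delta t},p_{h,\Delta t})$, whereas you correctly observe that this detour is unnecessary and go straight to $\langle R_1,\dot e_\varphi\rangle+\langle R_2,e_p\rangle$; your anticipated ``obstacle'' of matching the collected residual terms to the stated $R_1,R_2$ is likewise not spelled out in the paper's proof.
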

\textcolor{black}{\begin{rem}\label{regremark}From the mapping properties in Theorem \ref{mapthm2}, the assumptions are satisfied if $F, G \in L^2([0,T],L^2(\Gamma))$ and $(\varphi_{h,\Delta t},p_{h,\Delta t}) \in  H^2([0,T],\widetilde{H}^{1}(\Gamma))\times H^{1}([0,T], L^2(\Gamma))$. For example, this is true for discretizations with $\varphi_{h,\Delta t}$ piecewise linear  in space, higher-order spline in time, and $p_{h,\Delta t}$ piecewise constant in space, piecewise linear in time. In practice, as noted in \cite{jr} for the single-layer potential, the loss of time derivatives in the mapping properties of Theorems \ref{mapthm} and \ref{mapthm2} is not sharp, and $R_1, R_2 \in L^2([0,T],L^2(\Gamma))$ can also be expected for lower-order discretizations in time.\end{rem}}
\begin{proof}
For every $(\psi_{h,\Delta t}, q_{h,\Delta t}) \in V^{p_1,q_1}_{h, \Delta t} \times V^{p_2,q_2}_{h, \Delta t}$ we have
\begin{align*}
& \|p-p_{h,\Delta t}\|^2_{0,0,\Gamma}+\|\dot{\varphi} -\dot{\varphi}_{h,\Delta t}\|_{0,0,\Gamma}^2  \\
&\lesssim a_{T}((\varphi -\varphi_{h,\Delta t},p-p_{h,\Delta t}),(\varphi -\varphi_{h,\Delta t},p-p_{h,\Delta t}))\\
&=\langle R_1, \dot{\varphi} -\dot{\psi}_{h,\Delta t}\rangle + \langle R_2, p -q_{h,\Delta t}\rangle\\
& \leq  \|R_1\|_{0,0,\Gamma}\|\dot{\varphi} -\dot{\psi}_{h,\Delta t}\|_{0,0,\Gamma}+\|R_2\|_{0,0,\Gamma}\|p -q_{h,\Delta t}\|_{0,0, \Gamma}\\
& \leq  (\|R_1\|_{0,0,\Gamma}+\|R_2\|_{0,0,\Gamma}) (\|p-q_{h,\Delta t}\|_{0,0,\Gamma}+\|\dot{\varphi} -\dot{\psi}_{h,\Delta t}\|_{0,0,\Gamma})\ .
\end{align*}
The assertion is obtained by choosing $(\psi_{h,\Delta t}, q_{h,\Delta t})=(\varphi_{h,\Delta t},p_{h,\Delta t})$.
\end{proof}

Naturally, for a quasi-uniform discretization of $\Gamma$ and under stronger assumptions on $R_1, R_2$ we may obtain powers of $h$ and $\Delta t$ on the right hand side by the following argument:\\
As in the proof of Theorem \ref{DPreliable}, $\langle R_1,\dot{\widetilde{\psi}}_{h,\Delta t}\rangle = \langle R_2, \widetilde{q}_{h,\Delta t}\rangle =0$ for all $(\widetilde{\psi}_{h,\Delta t}, \widetilde{q}_{h,\Delta t}) \in V^{p_1,q_1}_{h, \Delta t} \times V^{p_2,q_2}_{h, \Delta t}$. Hence
\begin{align*}
\|R_2\|_{0,0,\Gamma}^2 &= \langle R_2,R_2\rangle = \langle R_2,R_2-{\widetilde{q}}_{h,\Delta t}\rangle\leq \|R_2\|_{0,0,\Gamma}\|R_2-\widetilde{q}_{h,\Delta t}\|_{0,0,\Gamma}.
\end{align*}
Choosing $\widetilde{q}_{h,\Delta t} =\Pi_{\Delta t} \Pi_{h}R_2$ yields
$$\|R_2\|_{0,0, \Gamma}\lesssim \Delta t \|\partial_t R_2\|_{0,0,\Gamma} + \|h \cdot \nabla_\Gamma R_2\|_{0,0,\Gamma} + \Delta t\|h \cdot \nabla_\Gamma \partial_t R_2\|_{0,0,\Gamma}$$
provided $R_2 \in H^1([0,T], H^1(\Gamma))$.\\
Assuming $R_1 \in H^1([0,T], H^1(\Gamma))$, we similarly have
\begin{align*}
\|R_1\|_{0,0,\Gamma}^2 &= \langle R_1,R_1\rangle = \langle R_1,R_1-\dot{\widetilde{\psi}}_{h,\Delta t}\rangle \leq \|R_1\|_{\frac{1}{2}-s,0,\Gamma}\|\int_0^t R_1-\widetilde{\psi}_{h,\Delta t}\|_{\frac{1}{2}+s,0,\Gamma}\ .
\end{align*}
Choosing $\widetilde{\psi}_{h,\Delta t} = \Pi_{\Delta t} \Pi_{h}\int_0^t R_1$ and $s=\frac{1}{2}$ results as in Lemma \ref{interp} in
\begin{align*}
\|R_1\|_{0,0,\Gamma} &\lesssim \Delta t \|\partial_t R_1\|_{0,0,\Gamma} + \|h\cdot \nabla_\Gamma R_1\|_{0,0,\Gamma} +\Delta t\|h\cdot \nabla_\Gamma \partial_t R_1\|_{0,0,\Gamma}\ .
 \end{align*}
Altogether,
\begin{align*}
 &\|p-p_{h,\Delta t}\|_{0,0,\Gamma}+\|\dot{\varphi} -\dot{\varphi}_{h,\Delta t}\|_{0,0,\Gamma} + \|{\varphi} -{\varphi}_{h,\Delta t}\|_{0,\frac{1}{2},\Gamma}\\
 &\lesssim \|R_1\|_{0,0,\Gamma} + \|R_2\|_{0,0,\Gamma}\\
 & \lesssim \sum_{i=1}^2 \Delta t \|\partial_t R_i\|_{0,0,\Gamma} + \|h \cdot \nabla R_i\|_{0,0,\Gamma}  + \Delta t\|h \cdot \nabla \partial_t R_i\|_{0,0,\Gamma}\ .
\end{align*}

\textcolor{black}{As in Remark \ref{regremark}, a sufficient condition for $R_1, R_2 \in H^1([0,T], H^1(\Gamma))$ is given by $F,G \in H^1([0,T], H^1(\Gamma))$ and $(\varphi_{h,\Delta t},p_{h,\Delta t}) \in  H^3([0,T],H^{2}(\Gamma))\times H^{2}([0,T], H^1(\Gamma))$. This requires an extension of Theorem \ref{mapthm2} to Sobolev indices $s > \frac{1}{2}$, available for example using pseudodifferential operator techniques for smooth $\Gamma$. Again, because Theorems \ref{mapthm} and \ref{mapthm2} are not sharp, less time regularity might be required in practice.}

\section{Error estimates for general discretizations}\label{arberror}

This section generalizes the results for the single layer potential without any assumptions on the underlying meshes.

We recall Lemma 3 in \cite{graded}:
\begin{lemma}\label{cc21}
Let $f_1, \dots, f_n \in H^r_\sigma(\mathbb{R}^+, \widetilde{H}^s(\Gamma))$, $0 \leq s \leq 1$, $r\geq 0$, such that $f_j f_k =0$ for $j \neq k$. Let $\omega_j$ be the interior of the support of $f_j$ with $\overline{\omega_j} = \mathrm{supp}\ f_j$. Then 
$$\Big\|\sum_{j=1}^n f_j \Big\|_{r,s,\Gamma, \ast}^2 \leq C \sum_{j=1}^n \left\|f_j\right\|_{r,s, \omega_j, \ast}^2\ .$$
The constant $C$ depends on $\Gamma$, but does not depend on $f_j$ or on $n$.
\end{lemma}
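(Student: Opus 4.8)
The plan is to reduce the space-time estimate to a purely spatial one at each frequency, and then to prove a localization estimate for the spatial norm $\|\cdot\|_{s,\omega,\widetilde{\Gamma}}$ that is uniform in the frequency parameter $\omega$.

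First I would apply the Plancherel identity in time that defines the norms in Definition \ref{sobdef}, writing
$$\Big\|\sum_j f_j\Big\|_{r,s,\Gamma,\ast}^2 = \int_{-\infty+i\sigma}^{+\infty+i\sigma} |\omega|^{2r}\Big\|\sum_j\widehat{f_j}(\omega)\Big\|_{s,\omega,\Gamma,\ast}^2\,d\omega\ .$$
The disjointness $f_jf_k=0$ means the $f_j$ have pairwise disjoint spatial supports $\omega_j\subset\Gamma$ (as dictated by the notation $\|\cdot\|_{r,s,\omega_j,\ast}$), so $\widehat{f_j}(\omega)$ is supported in $\overline{\omega_j}$ for every $\omega$. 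It therefore suffices to establish, with a constant $C$ depending only on $\Gamma$ and independent of $\omega$ and of $n$,
$$\Big\|\sum_j g_j\Big\|_{s,\omega,\Gamma,\ast}^2 \leq C\sum_j\|g_j\|_{s,\omega,\omega_j,\ast}^2$$
for $g_j = \widehat{f_j}(\omega)$; multiplying by $|\omega|^{2r}$ and integrating then yields the claim. Using $\|g_j\|_{s,\omega,\omega_j,\ast} = \|e_+g_j\|_{s,\omega,\widetilde{\Gamma}}$ and setting $v_j = e_+g_j\in H^s(\widetilde{\Gamma})$, which have pairwise disjoint supports $\overline{\omega_j}$ on the closed manifold $\widetilde{\Gamma}$, the estimate reduces to
$$\Big\|\sum_j v_j\Big\|_{s,\omega,\widetilde{\Gamma}}^2 \leq C\sum_j\|v_j\|_{s,\omega,\widetilde{\Gamma}}^2\ .$$

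To prove this I would pass to the equivalent intrinsic (Gagliardo--Slobodeckij) form of the norm, valid for $0\le s\le 1$ on the Lipschitz manifold $\widetilde{\Gamma}$, in which the frequency enters only through a lower-order weight,
$$\|v\|_{s,\omega,\widetilde{\Gamma}}^2 \sim |\omega|^{2s}\|v\|_{L^2(\widetilde{\Gamma})}^2 + [v]_{s,\widetilde{\Gamma}}^2\ ,\qquad [v]_{s,\widetilde{\Gamma}}^2 = \iint \frac{|v(x)-v(y)|^2}{|x-y|^{\dim\Gamma+2s}}\,dx\,dy\ ,$$
the seminorm being replaced by $\|\nabla v\|_{L^2}^2$ when $s=1$ and dropped when $s=0$ (the equivalence $(|\omega|^2+|\xi|^2)^s\sim|\omega|^{2s}+|\xi|^{2s}$ is uniform in $\omega$). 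The $L^2$ term is exactly additive over disjoint supports for every $\omega$, so the whole $\omega$-dependence is harmless; this already settles $s=0$, and for $s=1$ the gradient term is likewise additive since at almost every point at most one summand is nonzero. The remaining case $0<s<1$ is thus reduced to the localization of the $\omega$-independent seminorm $[\,\cdot\,]_{s,\widetilde{\Gamma}}$.

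For the seminorm I would split the double integral according to whether $x$ and $y$ lie in the same support $\omega_j$ or not. By disjointness the diagonal block contributes exactly $\sum_j[v_j]_{s,\widetilde{\Gamma}}^2$. The off-diagonal block, where $x\in\omega_j$ and $y$ lies outside $\omega_j$, is bounded using $|v(x)-v(y)|^2\le 2|v_j(x)|^2+2|v_k(y)|^2$ together with the elementary estimate $\int_{|x-y|>\mathrm{dist}(x,\partial\omega_j)}|x-y|^{-\dim\Gamma-2s}\,dy \lesssim \mathrm{dist}(x,\partial\omega_j)^{-2s}$, which reduces the cross terms to the weighted integral $\int_{\omega_j}|v_j(x)|^2\,\mathrm{dist}(x,\partial\omega_j)^{-2s}\,dx$. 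I expect this weighted integral to be the main obstacle: it is precisely where the range $0\le s\le 1$ and the use of the stronger $\ast$-norm on the right-hand side are needed. For $s<\tfrac12$ the weight is integrable and the bound by $\|v_j\|_{s,\omega,\omega_j,\ast}^2$ is immediate; for $s\ge\tfrac12$ finiteness of the $\ast$-norm forces $v_j$ to vanish in the appropriate trace sense on $\partial\omega_j$, and a Hardy inequality (with the usual logarithmic care at the endpoint $s=\tfrac12$, where $\widetilde{H}^{1/2}=H^{1/2}_{00}$) controls the weighted integral by $\|v_j\|_{s,\omega,\omega_j,\ast}^2$. All constants arise only from the chart equivalence of the norms and the Lipschitz geometry of $\widetilde{\Gamma}$, hence depend on $\Gamma$ but neither on the $f_j$ nor on their number $n$, as required.
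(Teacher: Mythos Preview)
The paper does not prove this lemma; it only recalls it from \cite{graded} (Lemma~3 there), which in turn extends the elliptic localization result of Carstensen--Maischak--Stephan \cite{cms}. Your strategy---reduce via Plancherel to a fixed-frequency spatial estimate, then pass to the Gagliardo--Slobodeckij form and observe that the $\omega$-dependent $L^2$ part is exactly additive while the $\omega$-independent seminorm obeys a superadditivity estimate---is the standard one and is correct.

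One simplification: the detour through the weighted integral $\int_{\omega_j}|v_j|^2\,\mathrm{dist}(\cdot,\partial\omega_j)^{-2s}$ and a Hardy inequality is unnecessary and introduces an avoidable worry about the regularity of $\partial\omega_j$ and the endpoint $s=\tfrac12$. Since $v_j$ is already the zero extension $e_+g_j$ to $\widetilde{\Gamma}$, one has $v_j(y)=0$ for $y\notin\omega_j$, so the off-diagonal piece is directly
\[
\sum_{j}\iint_{\omega_j\times(\widetilde\Gamma\setminus\omega_j)}\frac{|v_j(x)|^2}{|x-y|^{\dim\Gamma+2s}}\,dx\,dy
=\sum_{j}\iint_{\omega_j\times(\widetilde\Gamma\setminus\omega_j)}\frac{|v_j(x)-v_j(y)|^2}{|x-y|^{\dim\Gamma+2s}}\,dx\,dy
\le\sum_{j}[v_j]_{s,\widetilde\Gamma}^2\ ,
\]
which is already dominated by $\sum_j\|v_j\|_{s,\omega,\omega_j,\ast}^2$. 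This handles all $0<s<1$ uniformly, with constants coming only from the chart equivalence on $\widetilde\Gamma$ and hence depending on $\Gamma$ but not on $n$, the $f_j$, or $\omega$. Apart from this streamlining, your argument matches the intended proof.
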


The lemma will be applied to a finite partition of unity $\Phi$ given by non-negative Lipschitz functions $\{\phi_j\}_{j=1}^M$ on $\Gamma$ such that $\sum_{j=1}^M \phi_j = 1$.
\begin{definition}
The overlap of the partition of unity $\Phi$ is defined as $K(\Phi) = \max_{j} \mathrm{card}\{k : \phi_k \phi_j \neq 0\}$.
\end{definition}

For a partition of unity associated to a triangulation, $M$ tends to infinity as the mesh size decreases, while the overlap may be much smaller. We note a crucial observation from \cite{cms}, Lemma 3.1:
\begin{lemma}\label{cc31}
Let $\Phi$ be a finite partition of unity of $\Gamma$ with overlap $K(\Phi)$. Then there exists a partition of $\{1,\dots,M\}$ into $K \leq K(\Phi)$ non-empty subsets $M_1, \dots, M_K$, such that $\bigcup_{j=1}^K M_j = \{1, \dots, M\}$, $M_j \cap M_k = \emptyset$ if $j \neq k$ and for all $l \in \{1,\dots, K\}$ and $j,k \in M_l$ with $j \neq k$, $\phi_j \phi_k = 0$ on $[0,T]\times \Gamma$.
\end{lemma}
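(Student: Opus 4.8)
The plan is to recast the claim as a graph-coloring statement and settle it by a greedy coloring argument.

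First I would form the \emph{overlap graph} on the vertex set $\{1,\dots,M\}$, joining two distinct indices $j,k$ by an edge precisely when $\phi_j\phi_k\neq 0$ on $\Gamma$. Because each $\phi_j$ depends only on the spatial variable, $\phi_j\phi_k=0$ on $\Gamma$ holds if and only if $\phi_j\phi_k=0$ on $[0,T]\times\Gamma$, so the edges of this graph encode exactly the obstruction that must be absent inside each block $M_l$. A subset of $\{1,\dots,M\}$ on which all products $\phi_j\phi_k$ with $j\neq k$ vanish is precisely an independent set of the overlap graph, and the partition sought in the lemma is precisely a proper vertex coloring.

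Next I would bound the maximum degree of the overlap graph. For a fixed index $j$, the definition of the overlap gives $\mathrm{card}\{k:\phi_k\phi_j\neq 0\}\leq K(\Phi)$. This set always contains $j$ itself, since $\phi_j^2\neq 0$ wherever $\phi_j\neq 0$, so $j$ has at most $K(\Phi)-1$ neighbours in the overlap graph. Hence its maximum degree is at most $K(\Phi)-1$. The final step is then the standard greedy coloring: a finite graph of maximum degree $d$ admits a proper vertex coloring with $d+1$ colors, obtained by ordering the vertices arbitrarily and assigning to each the least color not already used on one of its previously colored neighbours. Applied here, this yields a proper coloring with at most $K(\Phi)$ colors; taking $M_1,\dots,M_K$ to be the non-empty color classes produces a partition of $\{1,\dots,M\}$ into $K\leq K(\Phi)$ pairwise disjoint, non-empty blocks, with $\phi_j\phi_k=0$ for all distinct $j,k$ lying in a common block.

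The argument involves no serious obstacle; it is elementary combinatorics. The single point deserving care is the off-by-one in the degree bound: the diagonal term $\phi_j^2$ is counted in the definition of $K(\Phi)$, so each vertex has degree at most $K(\Phi)-1$ rather than $K(\Phi)$, which is exactly what allows the number of colors — and hence blocks — to be bounded by $K(\Phi)$ instead of $K(\Phi)+1$.
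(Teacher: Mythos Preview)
Your argument is correct and is precisely the standard greedy graph-coloring proof of this combinatorial fact. The paper itself does not supply a proof of this lemma at all; it simply quotes it as Lemma~3.1 from Carstensen, Maischak, and Stephan \cite{cms}, where the same greedy-coloring argument is used. Your observation about the off-by-one --- that $j$ itself is counted in $\{k:\phi_k\phi_j\neq 0\}$, so the degree bound is $K(\Phi)-1$ rather than $K(\Phi)$ --- is exactly the point that makes the bound $K\leq K(\Phi)$ work.
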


Consider  a space-time discretization $\mathbb{R}^+ \times \Gamma = \bigcup_{l,j} [t_{l-1},t_l) \times  \Gamma_{j}^l$ subordinate to product mesh, and consider the partition of unity given by the associated hat functions $\{\phi_j^l\}_j$ on $\Gamma$. On $\mathbb{R}^+$ we consider a partition of unity $\{\psi_l\}_{l=0}^\infty$ such that $\psi_l$ is supported in the interval $((l-\frac{1}{2})\Delta t, (l+\frac{3}{2})\Delta t)$, and therefore $\psi_l \psi_{l'}=0$ whenever $l \in
\widetilde{M}_0 =2\mathbb{N}$, $l' \in \widetilde{M}_1 = 2\mathbb{N}+1$. \\
We obtain a  partition of unity $\{\widetilde{\psi}_{l,j}=\psi_l \otimes \phi_j^l\}_{l,j}$ on $\mathbb{R}^+\times \Gamma$.
\begin{theorem}\label{cct31}
Let $\Gamma' \subset \Gamma$ be connected. and let $\Phi$ be a finite partition of unity with overlap $K(\Phi)$. Then for any $f \in H^r_\sigma(\mathbb{R}^+, \widetilde{H}^s(\Gamma'))$ and any $0\leq s\leq1$, we have 
$$\|f\|_{r,s,\Gamma', \ast}^2 \lesssim_\sigma K(\Phi) \sum_{l=0}^\infty\sum_{j=1}^M \|f \widetilde{\psi}_{l,j} \|_{r,0,\Gamma'}^{2(1-s)}\|f \widetilde{\psi}_{l,j}\|_{r,1,\Gamma'}^{2s}\ .$$
\end{theorem}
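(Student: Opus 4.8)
The plan is to exploit that $\{\widetilde\psi_{l,j}\}$ is a partition of unity, so that $f=\sum_{l,j} f\widetilde\psi_{l,j}$, and then to reduce the estimate to Lemma \ref{cc21} by regrouping the indices $(l,j)$ into a \emph{controlled} number of families within which the pieces $f\widetilde\psi_{l,j}$ have pairwise disjoint supports. The two structural inputs are the even/odd splitting in time, $\psi_l\psi_{l'}=0$ for $l,l'$ of the same parity with $l\neq l'$, and the spatial splitting of Lemma \ref{cc31}. Once disjoint families are in place, Lemma \ref{cc21} localizes the $\widetilde H^s$-norm to the individual supports, and a final interpolation in the spatial index $s$ produces the product of the $s=0$ and $s=1$ norms on the right-hand side.

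Concretely, first split the time indices by parity $P\in\{0,1\}$. For each time level $l$ apply Lemma \ref{cc31} to the spatial partition $\{\phi_j^l\}_j$ (overlap $\le K(\Phi)$), obtaining subsets $M_1^l,\dots,M_K^l$ with $K\le K(\Phi)$ and $\phi_j^l\phi_{j'}^l=0$ for distinct $j,j'$ in the same $M_m^l$. Define the families $G_{P,m}=\{(l,j): l\equiv P\pmod 2,\ j\in M_m^l\}$ for $P\in\{0,1\}$, $m\in\{1,\dots,K\}$; there are at most $2K(\Phi)$ of them. Two distinct indices in the same $G_{P,m}$ are disjoint: if their time levels differ they are time-disjoint by the parity condition, and if they share a time level they are space-disjoint by the choice of $M_m^l$. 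Writing $g_{P,m}=\sum_{(l,j)\in G_{P,m}} f\widetilde\psi_{l,j}$, the estimate $\|\sum_{i=1}^N a_i\|^2\le N\sum_i\|a_i\|^2$ with $N\le 2K(\Phi)$ gives $\|f\|_{r,s,\Gamma',\ast}^2\lesssim K(\Phi)\sum_{P,m}\|g_{P,m}\|_{r,s,\Gamma',\ast}^2$.

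Next I apply Lemma \ref{cc21} to each $g_{P,m}$ (its summands lie in $H^r_\sigma(\mathbb{R}^+,\widetilde H^s(\Gamma'))$ since $\widetilde\psi_{l,j}$ is Lipschitz and $0\le s\le 1$, $r\ge 0$), obtaining $\|g_{P,m}\|_{r,s,\Gamma',\ast}^2\lesssim \sum_{(l,j)\in G_{P,m}}\|f\widetilde\psi_{l,j}\|_{r,s,\omega_{l,j},\ast}^2$, where $\omega_{l,j}$ is the space-time support. Summing over the $\le 2K(\Phi)$ families collapses the double sum back to $\sum_{l,j}$ and yields $\|f\|_{r,s,\Gamma',\ast}^2\lesssim_\sigma K(\Phi)\sum_{l,j}\|f\widetilde\psi_{l,j}\|_{r,s,\omega_{l,j},\ast}^2$. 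For each piece I interpolate on the starred scale: since $\|\cdot\|_{s,\omega,\ast}=\|e_+(\cdot)\|_{s,\widetilde\Gamma}$ is a genuine full-space norm, Hölder applied first in the Fourier variable $\xi$ and then in the Laplace frequency $\omega$ (splitting the weight $|\omega|^{2r}=|\omega|^{2r(1-s)}|\omega|^{2rs}$) gives the log-convexity bound $\|g\|_{r,s,\ast}^2\le \|g\|_{r,0,\ast}^{2(1-s)}\|g\|_{r,1,\ast}^{2s}$ with constant $1$. Finally I pass to the non-starred norms: at the integer indices $s=0,1\notin\frac12+\mathbb{Z}$ the starred and quotient norms are equivalent, and since $f\widetilde\psi_{l,j}$ is supported in $\omega_{l,j}\subset\Gamma'$ the quotient norm on $\omega_{l,j}$ is dominated by that on $\Gamma'$, giving the stated factor $\|f\widetilde\psi_{l,j}\|_{r,0,\Gamma'}^{2(1-s)}\|f\widetilde\psi_{l,j}\|_{r,1,\Gamma'}^{2s}$.

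The routine parts are the partition-of-unity expansion and the interpolation inequality. The main obstacle, and the place needing real care, is the combinatorial bookkeeping that keeps the number of disjoint families (hence the prefactor) proportional to $K(\Phi)$ rather than to $M$ or to the number of time steps — this is exactly where Lemma \ref{cc31} and the parity splitting are used, and where the possibly time-level-dependent spatial meshes $\{\phi_j^l\}_j$ must be treated level by level. A secondary technical point is the starred-versus-quotient conversion, which is clean only because the interpolation endpoints $s=0,1$ avoid the half-integer values where the two norm families genuinely differ; the $\sigma$-dependence of the constant is inherited from the localization in Lemma \ref{cc21}.
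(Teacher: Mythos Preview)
Your proof is correct and follows essentially the same approach as the paper: both arguments split the time index by parity, use Lemma \ref{cc31} for the spatial splitting into $\le K(\Phi)$ disjoint colour classes, apply Lemma \ref{cc21} to localize the $\widetilde H^s$-norm onto the individual space--time pieces, and finish with the interpolation inequality $\|g\|_{r,s,\ast}\lesssim\|g\|_{r,0}^{1-s}\|g\|_{r,1}^{s}$. The only organizational difference is that the paper applies Lemma \ref{cc21} in two passes (first spatially to reach $\omega_j$, then temporally to reach $I_l\times\omega_j$), whereas you apply it once to the space--time--disjoint family $G_{P,m}$; your handling of a possibly $l$-dependent spatial partition $\{\phi_j^l\}_j$ by running Lemma \ref{cc31} level by level is slightly more careful than the paper's presentation.
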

\begin{proof}
We show \begin{equation}\label{localizationineq}\|f\|_{r,s,\Gamma', \ast}^2 \lesssim 2K(\Phi) \sum_{l=0}^\infty \sum_{j=1}^M \|f \widetilde{\psi}_{l,j}\|_{r,s,\Gamma',\ast}^{2}\ .\end{equation}
The assertion then follows from the interpolation estimate
$$\|f \widetilde{\psi}_{l,j}\|_{r,s,\Gamma', \ast} \lesssim \|f \widetilde{\psi}_{l,j}\|_{r,0,\Gamma'}^{1-s}\|f \widetilde{\psi}_{l,j}\|_{r,1,\Gamma'}^{s}\ .$$
To show \eqref{localizationineq}, we consider a partition $M_1, \dots, M_K$ as in Lemma \ref{cc31}. Then $f = \sum_{l=0}^\infty\sum_{j=1}^M \widetilde{\psi}_{l,j} f$, so that
$$ \|f\|_{r,s,\Gamma', \ast}^2 =\|\sum_{m=0}^1 \sum_{l \in \widetilde{M}_m}\sum_{k=1}^K\sum_{j\in M_k}\widetilde{\psi}_{l,j} f\|_{r,s,\Gamma', \ast}^2 \leq 2K \sum_{m=0}^1\sum_{k=1}^K \|\sum_{l \in \widetilde{M}_m}\sum_{j\in M_k}\widetilde{\psi}_{l,j} f\|_{r,s,\Gamma', \ast}^2 \ .$$
With Lemma \ref{cc21} and $I_l=((l-1)\Delta t, (l+2) \Delta t)$, $$\|\sum_{l \in \widetilde{M}_m}\sum_{j\in M_k}\widetilde{\psi}_{l,j} f\|_{r,s,\Gamma', \ast}^2 \leq \sum_{j\in M_k} \|\sum_{l \in \widetilde{M}_m}\widetilde{\psi}_{l,j} f\|_{r,s, \omega_j,\ast}^2 \leq \sum_{l \in \widetilde{M}_m}\sum_{j\in M_k} \|\widetilde{\psi}_{l,j} f\|_{r,s,I_l \times \omega_j,\ast}^2\ ,$$ so that 
$$\|f\|_{r,s,\Gamma', \ast}^2 \leq 2 K \sum_{m=0}^1 \sum_{l \in \widetilde{M}_m}\sum_{k=1}^K \sum_{j\in M_k} \|\widetilde{\psi}_{l,j} f\|_{r,s,I_l\times \omega_j,\ast}^2 = 2K \sum_{l=0}^\infty \sum_{j=1}^M \|\widetilde{\psi}_{l,j} f\|_{r,s,I_l\times \omega_j,\ast}^2\ ,$$ 
which shows \eqref{localizationineq}.
\end{proof}
\textcolor{black}{Note the following Friedrichs inequality, which follows from the time-independent Friedrichs inequality and Fourier transformation into the time domain:}
$$\|\widetilde{\psi}_{l,j} f\|_{r,1,\Gamma'}\lesssim _\sigma  \|\nabla( \widetilde{\psi}_{l,j} f)\|_{r,0,\Gamma'} +  \|\partial_t(\widetilde{\psi}_{l,j} f)\|_{r,0,\Gamma'}\ .$$
\textcolor{black}{Therefore} \begin{align*}\|\widetilde{\psi}_{l,j} f\|_{r,s,I_l\times \omega_j,\ast}^2 &\lesssim \|\widetilde{\psi}_{l,j} f\|_{r,0,\Gamma'}^{2(1-s)}\|\widetilde{\psi}_{l,j} f\|_{r,1,\Gamma'}^{2s}\\ &\lesssim_\sigma d_j^{2(1-s)}(1+d_j^2)^s\left( \|\nabla(\widetilde{\psi}_{l,j} f)\|_{r,0,\Gamma'}^2 +  \|\partial_t(\widetilde{\psi}_{l,j} f)\|_{r,0,\Gamma'}^2\right)\ .\end{align*}
Here $d_j = \max\{\delta_j, \Delta t\}$, where $\delta_j$ is the width of the support of $\phi_j$, defined as the smallest number such that the following is true: There exists a direction $n \in \mathbb{R}^3$, $|n|=1$, such that for all $x \in \mathbb{R}^3$ and for each plane $H$ perpendicular to $n$ with $x \in H$, the intersection $\mathrm{supp}\ \phi_j \cap H$ is a Lipschitz curve of length $\leq \delta_j$. 

We now use  $\mathcal{V}\partial_t (\phi-\phi_{h, \Delta t}) = \mathcal{R}$, the coercivity of $\mathcal{V}\partial_t$ in Proposition \ref{DPbounds}, and Theorem \ref{cct31} with $s=\frac{1}{2}$. We obtain the following a posteriori error estimate, with the residual localized in the space-time elements:
\begin{corollary}\label{cccor}  Let $\phi  \in H^{1}_\sigma(\mathbb{R}^+,H^{-\frac{1}{2}}(\Gamma))$ be the solution to \eqref{DP}, and let $\phi_{h,\Delta t} \in H^{1}_\sigma(\mathbb{R}^+,H^{-\frac{1}{2}}(\Gamma))$ such that $\mathcal{R} = \partial_t f - \mathcal{V} \partial_t \phi_{h,\Delta t}\in H^{0}_\sigma(\mathbb{R}^+,H^1(\Gamma))$. Then with \textcolor{black}{$\Box_{l,j} = \mathrm{supp}\ \widetilde{\psi}_{l,j}$},
$$\|\phi-\phi_{h, \Delta t}\|^2_{0, -\frac{1}{2}, \Gamma, \ast} \lesssim_\sigma \sum_{l,j}d_j \left(\|\nabla \mathcal{R}\|_{0,0,\textcolor{black}{\Box_{l,j}}}^2 + \|\partial_t \mathcal{R}\|_{0,0, \textcolor{black}{\Box_{l,j}}}^2\right) \simeq \sum_{l, j} \max\{(\Delta t)_l, h_{\textcolor{black}{l,j}}\}\ \|\mathcal{R}\|_{0,1,  \textcolor{black}{\Box_{l,j}}}^2\ .$$
\end{corollary}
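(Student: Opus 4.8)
The plan is to convert the coercivity (energy) error bound into a norm of the residual, and then to localise that norm to the space-time elements by means of Theorem \ref{cct31}. First I would apply the weak coercivity of Proposition \ref{DPbounds} to the error $e = \phi - \phi_{h,\Delta t}$, which lies in $H^1_\sigma(\mathbb{R}^+, H^{-\frac12}(\Gamma))$ by hypothesis, to get $\|e\|_{0,-\frac12,\Gamma,\ast}^2 \lesssim B_D(e,e)$. Since $\phi$ solves \eqref{DP}, one has $\mathcal{V}\partial_t\phi = \partial_t f$ and hence $\mathcal{V}\partial_t e = \mathcal{R}$, so that $B_D(e,e) = \langle \mathcal{V}\partial_t e, e\rangle_\sigma = \langle \mathcal{R}, e\rangle_\sigma$. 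Testing the residual against $e$ and using the duality of $H^{\frac12}(\Gamma)$ and $\widetilde{H}^{-\frac12}(\Gamma)$ in the $\sigma$-weighted $L^2$-pairing gives $\langle \mathcal{R}, e\rangle_\sigma \le \|\mathcal{R}\|_{0,\frac12,\Gamma}\,\|e\|_{0,-\frac12,\Gamma,\ast}$; dividing through leaves $\|e\|_{0,-\frac12,\Gamma,\ast} \lesssim \|\mathcal{R}\|_{0,\frac12,\Gamma} \le \|\mathcal{R}\|_{0,\frac12,\Gamma,\ast}$, the last step because the unstarred norm is weaker than the starred one. Everything is thereby reduced to bounding the fractional residual norm $\|\mathcal{R}\|_{0,\frac12,\Gamma,\ast}^2$; note that this reduction already carries the characteristic loss of one time derivative, as coercivity controls only the index $0,-\frac12$.

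Next I would localise $\|\mathcal{R}\|_{0,\frac12,\Gamma,\ast}^2$. Because $\widetilde{H}^{\frac12}$ is nonlocal, element contributions cannot simply be summed; instead I would apply Theorem \ref{cct31} with $r=0$, $s=\frac12$ and the product partition of unity $\{\widetilde{\psi}_{l,j} = \psi_l \otimes \phi_j^l\}$, obtaining $\|\mathcal{R}\|_{0,\frac12,\Gamma,\ast}^2 \lesssim_\sigma K(\Phi) \sum_{l,j} \|\mathcal{R}\widetilde{\psi}_{l,j}\|_{0,0,\Gamma}\,\|\mathcal{R}\widetilde{\psi}_{l,j}\|_{0,1,\Gamma}$. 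On each $\Box_{l,j} = \mathrm{supp}\,\widetilde{\psi}_{l,j}$, of spatial width $\lesssim \delta_j$ and temporal width $\lesssim \Delta t$, the recalled Friedrichs inequality bounds the $H^1$ factor by $\|\nabla(\widetilde{\psi}_{l,j}\mathcal{R})\|_{0,0} + \|\partial_t(\widetilde{\psi}_{l,j}\mathcal{R})\|_{0,0}$, while a Poincaré estimate extracts the weight $d_j = \max\{\delta_j, \Delta t\}$ from the $L^2$ factor; for $s=\frac12$ the interpolation weight $d_j^{2(1-s)}(1+d_j^2)^s$ collapses to $\simeq d_j$. Summing, and using the boundedness and bounded overlap of the $\widetilde{\psi}_{l,j}$ to pass from the cut-off quantities to the plain local norms, yields $\|\mathcal{R}\|_{0,\frac12,\Gamma,\ast}^2 \lesssim_\sigma \sum_{l,j} d_j\big(\|\nabla \mathcal{R}\|_{0,0,\Box_{l,j}}^2 + \|\partial_t \mathcal{R}\|_{0,0,\Box_{l,j}}^2\big)$. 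The asserted form then follows once one reads off from the anisotropic norm that $\|\nabla \mathcal{R}\|_{0,0,\Box}^2 + \|\partial_t \mathcal{R}\|_{0,0,\Box}^2 \simeq \|\mathcal{R}\|_{0,1,\Box}^2$ and that $d_j \simeq \max\{(\Delta t)_l, h_{l,j}\}$.

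I expect the main obstacle to be the localisation of the fractional-order norm together with the sharp tracking of the weight $d_j$, rather than the formal duality chain. Since $\widetilde{H}^{\frac12}$ is genuinely nonlocal, the whole estimate rests on Theorem \ref{cct31}, and behind it on the coloring argument of Lemmas \ref{cc21} and \ref{cc31}. The most delicate bookkeeping is the transition from $\nabla(\widetilde{\psi}_{l,j}\mathcal{R})$ to $\nabla\mathcal{R}$ on $\Box_{l,j}$: differentiating the hat functions produces factors of order $\delta_j^{-1}$, so one must keep the Friedrichs and Poincaré constants uniform in the mesh and exploit the bounded overlap, so that the effective local weight is exactly $d_j$ and not larger.
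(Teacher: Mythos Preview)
Your proposal is correct and follows essentially the same route as the paper: the paper's justification of Corollary~\ref{cccor} is precisely the three-line combination of $\mathcal{V}\partial_t(\phi-\phi_{h,\Delta t})=\mathcal{R}$, the coercivity of Proposition~\ref{DPbounds}, and Theorem~\ref{cct31} at $s=\tfrac12$, together with the Friedrichs-plus-Poincar\'e bound displayed immediately before the corollary to obtain the weight $d_j$. Your write-up simply unpacks this chain (coercivity $\Rightarrow$ duality pairing with $\mathcal{R}$ $\Rightarrow$ $\|\mathcal{R}\|_{0,\frac12,\Gamma,\ast}$ $\Rightarrow$ localisation via Theorem~\ref{cct31}) in more detail than the paper does, and your bookkeeping concerns about the $\delta_j^{-1}$ factors from differentiating the hat functions are exactly the content of the displayed estimate $\|\widetilde{\psi}_{l,j} f\|_{r,s}^2 \lesssim_\sigma d_j^{2(1-s)}(1+d_j^2)^s(\dots)$ preceding the corollary.
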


\section{Lower bounds} \label{lower}

As for time-independent problems, for the discussion of lower bounds we restrict ourselves to globally quasi-uniform meshes on a polyhedral screen $\Gamma$.

 Because of the different norms in the upper and lower bounds for $B_{D}$ in Proposition \ref{DPbounds}, the a posteriori estimate only satisfies a weak variant of efficiency: \\ \textcolor{black}{Provided $\phi \in H^{2}_\sigma(\mathbb{R}^+,H^{0}(\Gamma))$ from the mapping properties of $\mathcal{V}$ in Theorem  \ref{mapthm2} we conclude for $\varepsilon\in (0,1)$:} 
\begin{align*}&\max\{\Delta t, h\}^{-\frac{1-\varepsilon}{2}}\ \|\phi-\phi_{h,\Delta t}\|_{{0,-\frac{1}{2},\Gamma,\ast}}\lesssim\\&\quad \qquad \|\mathcal{R}\|_{0,1-\varepsilon,\Gamma}=\|\mathcal{V}(\partial_t \phi-\partial_t \phi_{h,\Delta t})\|_{0,1-\varepsilon, \Gamma} \lesssim \|\phi-\phi_{h,\Delta t}\|_{2,-\varepsilon,\Gamma}\leq \|\phi-\phi_{h,\Delta t}\|_{2,0,\Gamma}\ .\end{align*} \textcolor{black}{A proof of the sharp estimate, $\varepsilon =0$, follows from the endpoint estimate $s= \frac{1}{2}$ in Theorem \ref{mapthm2}. As mentioned there, it is known for $\Gamma$ of class $C^{1,\alpha}$ and, for elliptic problems, from a deep result by Verchota for Lipschitz $\Gamma$. } \\

As in the elliptic case, we aim to use the mapping properties of $\mathcal{V}$ together with approximation properties of the finite element spaces to recover the same spatial Sobolev index $-\frac{1}{2}$ in the upper and lower estimates.\\

\begin{theorem}\label{lowerboundthm}
Assume that $\mathcal{R} \textcolor{black}{ = \partial_t{f} - \mathcal{V} \partial_t{\phi}_{h,\Delta t}}\in H^{0}_\sigma(\mathbb{R}^+, H^{1}(\Gamma))$ and that the ansatz functions {$V=W^p_{h,\Delta t}  \subset H^{2}_\sigma(\mathbb{R}^+,H^{0}(\Gamma)) \cap V_{h,\Delta t}^{p,q}$} satisfy
\begin{equation} \label{star}
    \inf_{\psi_{h \Delta t} \in V} \Vert \phi- \psi_{h \Delta t} \Vert_{2,0,\Gamma} \simeq \max\{ h, \Delta t \}^{\beta}  
 \end{equation}
for some $\beta>0$.  Then for all $\varepsilon\in (0,1)$
\begin{align*}
    \Vert \mathcal{R} \Vert_{0,1-\varepsilon,\Gamma} \lesssim & \max \{ h^{-\frac{1}{2}}, (\Delta t)^{-\frac{1}{2}} \} \Vert \phi - \phi_{h \Delta t} \Vert_{2,-\frac{1}{2},\Gamma} .
\end{align*}
\end{theorem}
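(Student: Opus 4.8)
The plan is to reduce the statement, through the mapping properties of $\mathcal{V}$, to an inverse-type estimate for the Galerkin error $e:=\phi-\phi_{h,\Delta t}$, and then to establish that estimate by inserting a discrete projection and paying a negative power of the mesh width on the resulting discrete part. Since $\mathcal{V}\partial_t\phi=\partial_t f$, the residual is $\mathcal{R}=\mathcal{V}\partial_t e$. First I would apply Theorem \ref{mapthm2} with spatial index $s=\tfrac12-\varepsilon$ — admissible precisely because $\varepsilon\in(0,1)$ translates into $s\in(-\tfrac12,\tfrac12)$ — together with the fact that $\partial_t$ raises the temporal Sobolev index by one, to obtain
\[\|\mathcal{R}\|_{0,1-\varepsilon,\Gamma}=\|\mathcal{V}\partial_t e\|_{0,\frac12+s,\Gamma}\lesssim\|\partial_t e\|_{1,-\frac12+s,\Gamma,\ast}\lesssim\|e\|_{2,-\varepsilon,\Gamma,\ast}\ .\]
It then suffices to prove the inverse-type bound $\|e\|_{2,-\varepsilon,\Gamma,\ast}\lesssim\max\{h^{-1/2},(\Delta t)^{-1/2}\}\,\|e\|_{2,-\frac12,\Gamma}$, which recovers the spatial index $-\tfrac12$ at the cost of a negative power of $h$ and $\Delta t$.

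To prove this I would split $e=(\phi-\Pi_{\Delta t}\Pi_h\phi)+(\Pi_{\Delta t}\Pi_h\phi-\phi_{h,\Delta t})$. The second term lies in $V=W^p_{h,\Delta t}$, so a space--time inverse inequality raising the spatial index from $-\tfrac12$ to $-\varepsilon$ (a jump of $\tfrac12-\varepsilon\le\tfrac12$) applies and contributes the factor $\max\{h,\Delta t\}^{-(1/2-\varepsilon)}\le\max\{h^{-1/2},(\Delta t)^{-1/2}\}$. Bounding its $\|\cdot\|_{2,-1/2}$-norm by $\|e\|_{2,-\frac12}+\|\phi-\Pi_{\Delta t}\Pi_h\phi\|_{2,-\frac12}$ and using the quasi-optimality $\|\phi-\Pi_{\Delta t}\Pi_h\phi\|_{2,-\frac12}\lesssim\inf_{\psi\in V}\|\phi-\psi\|_{2,-\frac12}\le\|e\|_{2,-\frac12}$ (valid since $\phi_{h,\Delta t}\in V$), this discrete contribution is $\lesssim\max\{h^{-1/2},(\Delta t)^{-1/2}\}\,\|e\|_{2,-\frac12}$, as required. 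The remaining work is to control the first, continuous term $\|\phi-\Pi_{\Delta t}\Pi_h\phi\|_{2,-\varepsilon}$ by the same quantity, using the approximation rates of Lemma \ref{interp}.

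This last step is where I expect the main obstacle to lie. Because $\phi-\Pi_{\Delta t}\Pi_h\phi$ is not discrete, no inverse inequality is available for it, and its $-\varepsilon$ norm must be compared with the weak-norm error $\|e\|_{2,-\frac12}$ only through approximation estimates. The clean bound $\|\phi-\Pi_{\Delta t}\Pi_h\phi\|_{2,-\varepsilon}\lesssim\max\{h^{-1/2},(\Delta t)^{-1/2}\}\,\|e\|_{2,-\frac12}$ can hold only if $\|e\|_{2,-\frac12}$ does not decay asymptotically faster than the best-approximation error, i.e.\ if the error is genuinely of optimal order. This is precisely the role of the two-sided hypothesis \eqref{star}: its upper bound furnishes the approximation rate of $\Pi_{\Delta t}\Pi_h$ in the $0$-norm, while its lower bound rules out super-convergence and guarantees that $\|e\|_{2,-\frac12}$ dominates the higher-order continuous remainder, so that the latter is absorbed rather than producing an uncontrolled term.

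Establishing that sharp lower bound — in effect an inverse estimate transferred from the discrete part to the full error, underpinned by a lower bound for the best approximation — is the genuinely difficult ingredient, and it is exactly what is isolated in Theorem \ref{starthm} and Section \ref{sec:2D-Approx}; within the present theorem it enters through the assumed relation \eqref{star}. A secondary technical point, which I would handle alongside, is the non-equivalence of the norms $\|\cdot\|_{s,\omega,\Gamma,\ast}$ and $\|\cdot\|_{s,\omega,\Gamma}$ at the half-integer index $-\tfrac12$, so that the passage from the starred error norm produced by the mapping property to the unstarred norm on the right-hand side is carried out carefully and, if necessary, folded into the mesh-dependent factor.
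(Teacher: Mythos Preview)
Your overall strategy --- reduce via the mapping properties of $\mathcal{V}$ to an inverse-type estimate for $e=\phi-\phi_{h,\Delta t}$, then split $e$ into a discrete and a continuous piece --- is the same as the paper's. The reduction $\|\mathcal{R}\|_{0,1-\varepsilon}\lesssim\|e\|_{2,-\varepsilon}\le\|e\|_{2,0}$ and the treatment of the discrete part by an inverse inequality are correct.

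The gap lies exactly where you flag it: the continuous piece. Your argument for it is circular. You need $\|\phi-\Pi\phi\|_{2,-\varepsilon}\lesssim\max\{h,\Delta t\}^{-1/2}\|e\|_{2,-1/2}$, and you propose to deduce this from the lower bound in \eqref{star}. But \eqref{star} bounds $\inf_{\psi}\|\phi-\psi\|_{2,0}$ from below, and hence only $\|e\|_{2,0}$ from below; it says nothing about $\|e\|_{2,-1/2}$, which can in principle be much smaller. So the claim that ``the lower bound rules out super-convergence and guarantees that $\|e\|_{2,-\frac12}$ dominates the higher-order continuous remainder'' is precisely the inequality you are trying to prove, not an input to it.

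The paper resolves this by a device going back to Carstensen: instead of projecting onto the \emph{given} mesh, it introduces a second, \emph{finer} mesh $(H,\Delta T)=(\rho h,\rho\Delta t)$ and projects there. Writing $E$ for the best-approximation error, $F$ for the ratio of the $-1$- to the $0$-norm approximation error, and $G$ for the inverse-inequality constant, one arrives at an inequality of the form
\[
\|e\|_{2,0}\le C\,\delta\,\|e\|_{2,0}+G(H,\Delta T)^{1/2}\|e\|_{2,-1/2},\qquad \delta=\tfrac{E(\phi,H,\Delta T)}{E(\phi,h,\Delta t)}\bigl(1+F^{1/2}G^{1/2}\bigr),
\]
and then \emph{absorbs} the first term on the right by choosing $\rho$ small. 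The two-sided hypothesis \eqref{star} enters exactly here: it yields $E(\phi,\rho h,\rho\Delta t)/E(\phi,h,\Delta t)\simeq\rho^{\beta}\to 0$, while $F\,G$ stays bounded, so $\delta<\tfrac{1}{2C}$ for $\rho$ small. After absorption one gets $\|e\|_{2,0}\lesssim G(H,\Delta T)^{1/2}\|e\|_{2,-1/2}\simeq\max\{h,\Delta t\}^{-1/2}\|e\|_{2,-1/2}$, which is the desired inverse-type bound on the full error, not just its discrete part. Your single-mesh splitting cannot produce this absorption step, and that is the missing idea.
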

\begin{rem}
 In Theorem \ref{starthm} below, the hypothesis \eqref{star} is verified using the singular expansion of the solution $\phi$ at the edges and corners. 
\end{rem}

\textcolor{black}{For the proof, we require an auxiliary projection $P_{h, \Delta t} $ from $ H^{2}(\mathbb{R}^+,H^{-1}(\Gamma))$ into the space of ansatz functions $\subseteq H^{2}(\mathbb{R}^+,L^{2}(\Gamma))$ such that $P_{h, \Delta t} $ is $H^{2}(\mathbb{R}^+, L^{2}(\Gamma))$-stable.}

\textcolor{black}{For the construction of $P_{h, \Delta t } = P_{\Delta t} \mathcal{Q}_h^\ast$ we use a  projection $P_{\Delta t}$ on the space of cubic splines $W_{\Delta t}\subset V^3_{\Delta t}$ in time and a Galerkin projection $\mathcal{Q}_h^\ast$ in space. }

\textcolor{black}{More precisely, we consider the projection  $\mathcal{Q}_h : L^{2}(\Gamma) \to V^p_h$ defined as follows: For $\phi \in L^{2}(\Gamma)$, we let $\mathcal{Q}_h \phi \in V^p_h$ be the unique solution to the  variational problem $$\langle \mathcal{Q}_h \phi ,\psi_h \rangle =  \langle \phi ,\psi_h \rangle$$
for all $\psi_h \in V^p_h$. $\mathcal{Q}_h$ is bounded on $L^{2}(\Gamma)$ and self-adjoint, $\mathcal{Q}_h = \mathcal{Q}_h^\ast$. Therefore, if  $\mathcal{Q}_h$ restricts to a bounded operator on $H^s_\omega(\Gamma)$, $\mathcal{Q}_h = \mathcal{Q}_h^\ast$ extends with the same norm to a bounded operator on $\widetilde{H}^{-s}_\omega(\Gamma)$. }

\textcolor{black}{For triangulations on which $\mathcal{Q}_h$ is a bounded operator on $H^1(\Gamma)$, we show that the  operator norm is uniformly bounded in $\omega$:
\begin{lemma} Let $s\in [0,1]$ and $\mathcal{Q}_h$ bounded on $H^1(\Gamma)$. Then for all $\phi \in H^s_\omega(\Gamma)$, we have:
$$\|\mathcal{Q}_h \phi\|_{s,\omega, \Gamma} \leq C \|\phi\|_{s,\omega, \Gamma}.$$
For all $\phi \in \widetilde{H}^{-s}_\omega(\Gamma)$, we have:
$$\|\mathcal{Q}_h \phi\|_{-s,\omega, \Gamma, \ast} \leq C \|\phi\|_{-s,\omega, \Gamma,\ast}.$$
\end{lemma}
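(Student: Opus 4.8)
The second inequality follows immediately from the first: since $\mathcal{Q}_h = \mathcal{Q}_h^\ast$ is self-adjoint and $L^2(\Gamma)$-bounded, boundedness on $H^s_\omega(\Gamma)$ dualizes to boundedness on $\widetilde{H}^{-s}_\omega(\Gamma)$ with the same norm, exactly as recorded before the statement. So the whole point is to bound $\mathcal{Q}_h$ on $H^s_\omega(\Gamma)$ for $s \in [0,1]$ \emph{with a constant independent of $\omega$}. I would establish the two endpoints $s=0$ and $s=1$ with uniform constants and then interpolate. The endpoint $s=0$ is free: $\|\cdot\|_{0,\omega,\Gamma}$ is the unweighted $L^2(\Gamma)$-norm, and $\mathcal{Q}_h$ is an orthogonal projection, so $\|\mathcal{Q}_h\phi\|_{0,\omega,\Gamma} \leq \|\phi\|_{0,\omega,\Gamma}$ for every $\omega$.

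For the endpoint $s=1$ the key is that the weight splits additively, $|\omega|^2 + |\xi|^2$, so that from the Fourier definition of the norm
\[
\|\phi\|_{1,\omega,\Gamma}^2 = |\omega|^2\, L(\phi) + T(\phi),\qquad L(\phi) := \sum_i \|g_i\|_{L^2(\mathbb{R}^n)}^2,\quad T(\phi) := \sum_i \big\||\xi|\,\widehat{g_i}\big\|_{L^2(\mathbb{R}^n)}^2,
\]
where $g_i = (\alpha_i \phi)\circ\varphi_i^{-1}$ and $L,T$ are \emph{independent of $\omega$}. Two $\omega$-free equivalences make this usable: $L(\phi) \simeq \|\phi\|_{L^2(\Gamma)}^2$ by construction, and $T(\phi)\simeq \|\phi\|_{H^1(\Gamma)}^2$. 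The latter is the one clever point: a priori $T$ only controls the top-order (seminorm) part, but each $g_i$ has compact support, as $\alpha_i$ is supported in $B_i$ and $\varphi_i$ maps into the unit cube; hence the Poincar\'e--Friedrichs inequality gives $\|g_i\|_{L^2} \lesssim \big\||\xi|\widehat{g_i}\big\|_{L^2}$ and therefore $\|g_i\|_{H^1}^2 \simeq \big\||\xi|\widehat{g_i}\big\|_{L^2}^2$; summing and using the standard norm equivalence yields $T(\phi)\simeq\|\phi\|_{H^1(\Gamma)}^2$. Now I combine: $L(\mathcal{Q}_h\phi)\simeq\|\mathcal{Q}_h\phi\|_{L^2}^2 \leq \|\phi\|_{L^2}^2 \simeq L(\phi)$ from the $L^2$-projection property, while $T(\mathcal{Q}_h\phi)\simeq\|\mathcal{Q}_h\phi\|_{H^1(\Gamma)}^2 \lesssim \|\phi\|_{H^1(\Gamma)}^2 \simeq T(\phi)$ from the hypothesis that $\mathcal{Q}_h$ is bounded on $H^1(\Gamma)$. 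Substituting into the displayed identity gives $\|\mathcal{Q}_h\phi\|_{1,\omega,\Gamma}^2 \lesssim |\omega|^2 L(\phi) + T(\phi) = \|\phi\|_{1,\omega,\Gamma}^2$ with a constant independent of $\omega$, precisely because the factor $|\omega|^2$ multiplies only the $L^2$-term.

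For $s\in(0,1)$ I would interpolate. Having bounded $\mathcal{Q}_h$ on $H^0_\omega(\Gamma)$ and $H^1_\omega(\Gamma)$ with norms $1$ and $C$ uniformly in $\omega$, complex interpolation gives boundedness on $[H^0_\omega,H^1_\omega]_s$ with norm $\leq C^s$. The remaining point is that $[H^0_\omega(\Gamma), H^1_\omega(\Gamma)]_s = H^s_\omega(\Gamma)$ \emph{with $\omega$-uniform constants}: after localization the norms are weighted $L^2$-norms with Fourier weight $(|\omega|^2+|\xi|^2)^{s}$, and since this weight is the geometric mean $\big((|\omega|^2+|\xi|^2)^0\big)^{1-s}\big((|\omega|^2+|\xi|^2)^1\big)^{s}$ of the two endpoint weights, the Stein--Weiss interpolation of weighted $L^2$-spaces identifies the interpolation space isometrically, independently of $\omega$. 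This yields the first inequality for all $s\in[0,1]$, and the lemma follows.

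The main obstacle is genuinely the uniformity in $\omega$ rather than boundedness itself. Concretely it concentrates in the case $s=1$: one must split the weight so that the $\omega$-dependence sits entirely on the $L^2$-term, which is trivially controlled by the projection, and then realise that the $\omega$-free top-order functional $T$ is equivalent to the \emph{full} $H^1(\Gamma)$-norm, so that the bare hypothesis that $\mathcal{Q}_h$ is bounded on $H^1(\Gamma)$ already suffices; this is exactly where the compact support of the localized pieces and the local Poincar\'e inequality enter. A secondary technical point is to keep the interpolation identification $[H^0_\omega,H^1_\omega]_s = H^s_\omega$ uniform in $\omega$, and to carry the argument through for the quotient norm of $H^s(\Gamma)$ and the supported norm of $\widetilde{H}^{-s}(\Gamma)$ by tracking the infima and extensions defining them; these follow the same lines.
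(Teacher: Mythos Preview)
Your proof is correct and follows essentially the same route as the paper: endpoint bounds at $s=0$ (trivial from the $L^2$-projection) and $s=1$ (split the $\omega$-weighted norm into an $L^2$-part carrying the $|\omega|^2$ factor and a gradient part controlled by the $H^1(\Gamma)$-boundedness hypothesis), then interpolation for intermediate $s$, and duality for the second assertion. The paper writes the $s=1$ step slightly more directly as $\|\mathcal{Q}_h\phi\|_{1,\omega,\Gamma}^2 = \|\omega\mathcal{Q}_h\phi\|_{L^2(\Gamma)}^2 + \|\nabla\mathcal{Q}_h\phi\|_{L^2(\Gamma)}^2$ and absorbs the leftover $\|\phi\|_{L^2}^2$ from $\|\phi\|_{H^1}^2$ using $|\omega|\geq\sigma$, whereas you route through the localized Fourier definition and invoke Poincar\'e on the compactly supported pieces to make $T(\phi)\simeq\|\phi\|_{H^1(\Gamma)}^2$; these are equivalent manoeuvres. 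Your extra care about the $\omega$-uniformity of the interpolation identification $[H^0_\omega,H^1_\omega]_s=H^s_\omega$ is a point the paper simply asserts.
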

The conditions are satisfied, for example, on quasi-uniform meshes \cite{cch1}, or more generally on adaptive meshes generated by NVB refinements. We only use it in the quasi-uniform case.
}

\begin{proof}
\textcolor{black}{For the first assertion we note that it is clear for $s=0$. We show it for $s=1$, and the case for general $s\in [0,1]$ follows from interpolation.}\\

\textcolor{black}{
By assumption, 
$$\|\nabla \mathcal{Q}_h \phi\|_{L^2(\Gamma)} \leq C \|\phi\|_{H^1(\Gamma)}.$$ Further $\mathcal{Q}_h$ is bounded with norm $1$ on $L^{2}(\Gamma)$, so that 
$$\|\mathcal{Q}_h \phi\|_{L^2(\Gamma)} \leq \|\phi\|_{L^2(\Gamma)}\ .$$
With
$$\|\mathcal{Q}_h \phi\|_{1,\omega, \Gamma}^2 = \|\omega \mathcal{Q}_h \phi\|_{L^2(\Gamma)}^2+\|\nabla \mathcal{Q}_h \phi\|_{L^2(\Gamma)}^2, $$
we conclude $$\|\mathcal{Q}_h \phi\|_{1,\omega, \Gamma}^2 \leq \|\omega \phi\|_{L^2(\Gamma)}^2 + C^2 \|\phi\|_{H^1(\Gamma)}^2 \leq C^2 \|\phi\|_{1,\omega, \Gamma}^2\ .$$
This shows the first assertion for $s=1$.
}

\textcolor{black}{The second assertion follows from the first one, with the same constant $C$. Indeed, 
\begin{align*}
\|\mathcal{Q}_h \phi\|_{-s,\omega, \Gamma, \ast} &= \sup_{0 \neq \psi \in H^s_\omega(\Gamma)}  \frac{\langle\mathcal{Q}_h \phi , \psi\rangle_{L^2(\Gamma)}}{\|\psi\|_{s,\omega, \Gamma}}= \sup_{0 \neq \psi \in H^s_\omega(\Gamma)}  \frac{\langle \phi , \mathcal{Q}_h\psi\rangle_{L^2(\Gamma)}}{\|\psi\|_{s,\omega, \Gamma}} \leq C\|\phi\|_{-s,\omega, \Gamma, \ast}\ .
\end{align*}}
\end{proof}
\textcolor{black}{Using the Laplace transform to transfer the result into the time-domain, we obtain $$\|\mathcal{Q}_h \phi\|_{r,-s, \Gamma, \ast} \leq C \|\phi\|_{r,-s, \Gamma,\ast}$$ for $s \in [0,1]$ and all $r$. {Further note that the interpolation operator $P_{\Delta t}$ in time for cubic splines is continuous on $H^2$ functions.} The continuity extends to $ H^{2}(\mathbb{R}^+,\widetilde{H}^{-s}(\Gamma))$. We conclude:
\begin{lemma} \label{pproj} Consider a quasi-uniform mesh, $s\in [0,1]$ and $P_{h, \Delta t } = P_{\Delta t} \mathcal{Q}_h$.
For all $\phi \in  H^{2}(\mathbb{R}^+,\widetilde{H}^{-s}(\Gamma))$, we have
$$\|P_{h, \Delta t } \phi\|_{2,-s, \Gamma, \ast} \leq C \|\phi\|_{2,-s, \Gamma,\ast}.$$
\end{lemma}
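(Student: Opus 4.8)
The plan is to combine two continuity statements that have effectively already been established in the preceding text, and then to transfer the spatial estimate from the Laplace (frequency) domain to the space-time domain. The operator factors as $P_{h,\Delta t} = P_{\Delta t}\mathcal{Q}_h$, so it suffices to show that each factor is continuous on the relevant scale and that the two factors act on independent variables (time versus space), allowing me to estimate them successively. Concretely, I would first record the spatial estimate $\|\mathcal{Q}_h \phi\|_{-s,\omega,\Gamma,\ast}\leq C\|\phi\|_{-s,\omega,\Gamma,\ast}$ for $s\in[0,1]$, uniformly in $\omega$, which is exactly the content of the preceding Lemma together with the self-adjointness duality argument. The crucial point is the \emph{uniformity in $\omega$} of the constant $C$, since it is what survives the Laplace-transform integration.

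The second step is to pass from the frequency domain to the time domain for the spatial factor. Using the definition of the anisotropic norm as an integral over the vertical line $-\infty+i\sigma$ to $+\infty+i\sigma$ against the weight $|\omega|^{2r}$, I would apply the $\omega$-uniform bound on $\mathcal{Q}_h$ pointwise in $\omega$ under the integral sign. Since $\mathcal{Q}_h$ acts only on the spatial variable and commutes with the Laplace transform in time, $\widehat{\mathcal{Q}_h\phi}(\omega)=\mathcal{Q}_h\hat\phi(\omega)$, and hence
\begin{align*}
\|\mathcal{Q}_h\phi\|_{r,-s,\Gamma,\ast}^2 = \int_{-\infty+i\sigma}^{+\infty+i\sigma}|\omega|^{2r}\|\mathcal{Q}_h\hat\phi(\omega)\|_{-s,\omega,\Gamma,\ast}^2\,d\omega \leq C^2\int_{-\infty+i\sigma}^{+\infty+i\sigma}|\omega|^{2r}\|\hat\phi(\omega)\|_{-s,\omega,\Gamma,\ast}^2\,d\omega,
\end{align*}
which is precisely $C^2\|\phi\|_{r,-s,\Gamma,\ast}^2$. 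This gives $\|\mathcal{Q}_h\phi\|_{r,-s,\Gamma,\ast}\leq C\|\phi\|_{r,-s,\Gamma,\ast}$ for every $r$ and $s\in[0,1]$, as already stated in the text preceding the lemma; I would take $r=2$.

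The third step handles the temporal factor $P_{\Delta t}$, the interpolation projection onto cubic splines $W_{\Delta t}$. The text records that $P_{\Delta t}$ is continuous on $H^2$ in time. Since $P_{\Delta t}$ acts only in time and commutes with the spatial operations that define $\widetilde{H}^{-s}(\Gamma)$, I would apply it fiberwise: viewing $\mathcal{Q}_h\phi$ as an $\widetilde{H}^{-s}(\Gamma)$-valued function of $t$, the $H^2$-in-time continuity of $P_{\Delta t}$ yields $\|P_{\Delta t}(\mathcal{Q}_h\phi)\|_{2,-s,\Gamma,\ast}\leq C\|\mathcal{Q}_h\phi\|_{2,-s,\Gamma,\ast}$. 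Chaining the two estimates gives
\begin{align*}
\|P_{h,\Delta t}\phi\|_{2,-s,\Gamma,\ast} = \|P_{\Delta t}\mathcal{Q}_h\phi\|_{2,-s,\Gamma,\ast}\leq C\|\mathcal{Q}_h\phi\|_{2,-s,\Gamma,\ast}\leq C\|\phi\|_{2,-s,\Gamma,\ast},
\end{align*}
which is the claim.

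The main obstacle is making the fiberwise/tensor-product application of $P_{\Delta t}$ rigorous in the Banach-space-valued setting: one must justify that the scalar $H^2([0,\infty))$-stability of the cubic-spline projection lifts to the vector-valued space $H^2_\sigma(\mathbb{R}^+,\widetilde{H}^{-s}(\Gamma))$ with the same constant. This is routine because $P_{\Delta t}$ is given by a fixed, time-translation-structured linear rule independent of the Hilbert space it acts into, so it extends to the Hilbert-space-valued setting by tensoring against the identity on $\widetilde{H}^{-s}(\Gamma)$; the weight $e^{-2\sigma t}$ and the $|\omega|^{2r}$ factor are compatible with this since $P_{\Delta t}$ commutes with the spatial norm. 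One should keep track that the two constants $C$ come from genuinely independent mechanisms (spatial ellipticity of $\mathcal{Q}_h$ and temporal spline stability), so no circularity arises when composing them.
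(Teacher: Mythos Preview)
Your proposal is correct and follows essentially the same approach as the paper: the paper also obtains the time-domain bound $\|\mathcal{Q}_h\phi\|_{r,-s,\Gamma,\ast}\leq C\|\phi\|_{r,-s,\Gamma,\ast}$ by Laplace-transforming the $\omega$-uniform spatial estimate, then invokes the $H^2$-continuity of the cubic-spline interpolation $P_{\Delta t}$ and its extension to $H^2(\mathbb{R}^+,\widetilde{H}^{-s}(\Gamma))$-valued functions. Your writeup is simply more explicit about the fiberwise and tensor-product justifications than the paper's terse remarks preceding the lemma.
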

The approximation properties of $P_{h, \Delta t }$ analogous to Lemma \ref{interp} follow as in \cite{Glaefke}, Proposition 3.54.}

\begin{proof}[Proof of Theorem \ref{lowerboundthm}]The theorem partly follows the functional analytic approach of \cite{cc} for the time-independent case. We consider the following quantities:
the error of the best approximation $\Pi_{h, \Delta t} \phi $ of $ \phi $ in $ H^{2}_\sigma(\mathbb{R}^+,L^{2}(\Gamma))$,
	$$
	  E(\phi,h , \Delta t) = \Vert \phi - \Pi_{h, \Delta t} \phi  \Vert_{2, 0,\Gamma}\ ,
	$$
where $E(\phi,h , \Delta t) \simeq \max \{ h, \Delta t \}^{\beta}$ by \eqref{star}; the relative error of best approximation,
$$
	  F(\phi, h, \Delta  t) = \frac{\Vert \phi - \Pi_{h, \Delta t} \phi  \Vert_{2,-1,\Gamma}}{\Vert \phi - \Pi_{h, \Delta t} \phi  \Vert_{2,0,\Gamma}};
	$$
and the inverse inequality \cite{Glaefke}
	$$
	  G(h,\Delta t) = \sup_{\psi_{h \Delta t} \neq 0} \frac{\Vert \psi_{h, \Delta t} \Vert_{2,0,\Gamma}}{ \Vert \psi_{h, \Delta t} \Vert_{2,-1,\Gamma}} \lesssim \max\{h^{-1}, \Delta t^{-1}\}.
	$$
Their analysis relies on the projection operator $P_{h, \Delta t}$ from Lemma \ref{pproj}.

Note that $F(\phi, h, \Delta t) \lesssim \max\{ h , \Delta t \}$. Indeed because $\phi \in H^{2}_\sigma(\mathbb{R}^+,H^{0}(\Gamma))$, by duality and the approximation properties of $ \Pi_{h, \Delta t}$:
\begin{align*}
    \Vert \phi - \Pi_{h,\Delta t} \phi \Vert_{2,-1,\Gamma} &= \sup_{\eta} \frac{\langle \phi - \Pi_{h, \Delta t} \phi, \eta - \Pi_{h, \Delta t} \eta\rangle}{\Vert \eta \Vert_{-2,1,\Gamma}}\\
    & \lesssim \max\{ h , \Delta t \} \Vert \phi - \Pi_{h, \Delta t} \phi \Vert_{2,0,\Gamma}.
\end{align*}
This uses the estimate $\| \eta - \Pi_{h, \Delta t} \eta \|_{-2,0,\Gamma} \lesssim \max\{ h , \Delta t \} \|\eta\|_{-2,1,\Gamma}$.

From the beginning of this section, we recall $$\|\mathcal{R}\|_{0,1-\varepsilon,\Gamma}\lesssim \|\phi-\phi_{h,\Delta t}\|_{2,0,\Gamma}\leq \|\phi-\Pi_{H, \Delta T} \phi\|_{2,0,\Gamma} +  \|\Pi_{H, \Delta T} \phi-\phi_{h,\Delta t}\|_{2,0,\Gamma}\ .$$ 
Note that the first term is $\|\phi-\Pi_{H, \Delta T} \phi\|_{2,0,\Gamma} = E(\phi, H, \Delta T)$. For the second term we observe that 
\begin{align*}
\|\Pi_{H, \Delta T} \phi-\phi_{h,\Delta t}\|_{2,0, \Gamma} \lesssim G(H, \Delta T) \|\Pi_{H, \Delta T} \phi-\phi_{h,\Delta t}\|_{2,-1, \Gamma}\ ,
\end{align*}  
so that $\|\Pi_{H, \Delta T} \phi-\phi_{h,\Delta t}\|_{2,0, \Gamma} \lesssim G(H, \Delta T)^{1/2} \|\Pi_{H, \Delta T} \phi-\phi_{h,\Delta t}\|_{2,-\frac{1}{2}, \Gamma}$.  We conclude 
$$\|\mathcal{R}\|_{0,1-\varepsilon,\Gamma}\lesssim E(\phi, H, \Delta T) +  G(H, \Delta T)^{1/2} \|\Pi_{H, \Delta T} \phi-\phi_{h,\Delta t}\|_{2,-\frac{1}{2}, \Gamma}\ .$$
Further, $$\|\Pi_{H, \Delta T} \phi-\phi_{h,\Delta t}\|_{2,-\frac{1}{2}, \Gamma} \leq \|\Pi_{H, \Delta T} \phi- \phi\|_{2,-\frac{1}{2}, \Gamma} + \|\phi-\phi_{h,\Delta t}\|_{2,-\frac{1}{2}, \Gamma}\ .$$
From the definition of $F$ and interpolation, $\|\Pi_{H, \Delta T} \phi- \phi\|_{2,-\frac{1}{2}, \Gamma}$ is bounded by $$\lesssim F(\phi, H,\Delta T)^{1/2}\|\Pi_{H, \Delta T} \phi- \phi\|_{2,0, \Gamma}  = F(\phi, H,\Delta T)^{1/2}E(\phi, H, \Delta T)\ .$$
To sum up, 
\begin{align*}
\|\phi-\phi_{h,\Delta t}\|_{2,0,\Gamma} & \lesssim E(\phi, H, \Delta T) + G(H, \Delta T)^{1/2} F(\phi, H,\Delta T)^{1/2}E(\phi, h, \Delta t) \\ & \qquad +  G(H, \Delta T)^{1/2} \|\phi-\phi_{h,\Delta t}\|_{2,-\frac{1}{2}, \Gamma}\\ &
\leq \frac{E(\phi, H, \Delta T)}{E(\phi, h, \Delta t)} \|\phi-\phi_{h,\Delta t}\|_{2,0,\Gamma} (1+ G(H, \Delta T)^{1/2} F(\phi, H,\Delta T)^{1/2})\\ & \qquad +  G(H, \Delta T)^{1/2} \|\phi-\phi_{h,\Delta t}\|_{2,-\frac{1}{2}, \Gamma} \ ,
\end{align*}
or, with $\delta = \frac{E(\phi, H, \Delta T)}{E(\phi, h, \Delta t)} (1+ F(\phi, H,\Delta T)^{1/2} G(\phi, H,\Delta T)^{1/2})$ and a constant $C$, 
$$\|\phi-\phi_{h,\Delta t}\|_{2,0,\Gamma} \lesssim \frac{G(H, \Delta T)^{1/2}}{1- C\delta} \|\phi-\phi_{h,\Delta t}\|_{2,-\frac{1}{2}, \Gamma}\ .$$
If $\delta= \frac{E(\phi, H, \Delta T)}{E(\phi, h, \Delta t)} (1+ F(\phi, H,\Delta T)^{1/2} G(\phi, H,\Delta T)^{1/2})<\frac{1}{2C}$, we obtain 
  $$
    \Vert \mathcal{R} \Vert_{0,1-\varepsilon, \Gamma} \lesssim G(H,\Delta T)^{1/2} \|\phi - \phi_{h \Delta t} \|_{2,-\frac{1}{2}, \Gamma}\ .
  $$
 Here $H$ and $\Delta T$ are sufficiently small compared to $h$ and $\Delta t$, and it remains to choose them so that $\delta<\frac{1}{2C}$. Set $H=\rho h$ and $\Delta T = \rho \Delta t$ for $\rho \in (0,1)$. Using that $$F(\phi, \rho h,\rho \Delta t) G(\phi, \rho h, \rho \Delta t) \simeq \frac{\max\{h,\Delta t\}}{\min\{h,\Delta t\}}$$ is uniformly bounded in $\rho \in (0,1)$, so that it suffices to show that  $\frac{E(\phi, \rho h, \rho \Delta t)}{E(\phi, h, \Delta t)} \to 0$ as $\rho$ tends to $0$. This follows from \eqref{star}.
\end{proof}

\section{Best approximation and lower bounds}\label{sec:2D-Approx}

In this section \textcolor{black}{we} verify the hypothesis \eqref{star} in Theorem \ref{lowerboundthm} for polyhedral domains, by proving upper and lower bounds for the best approximation of the solution to the wave equation with Dirichlet boundary conditions. 

Let $\Omega$ be a polyhedral domain and $u$ a solution to the wave equation in $\Omega$:
\begin{align}
\partial_t^2 u(t,x) -\Delta u(t,x)&=0 &\quad &\text{in }\mathbb{R}^+_t \times \Omega_x\ , \\
u(t,x)&=g(t,x) &\quad &\text{on }  \Gamma= \partial \Omega \ ,\\
u(0,x)=\partial_t u(0,x)&=0 &\quad &\text{in } \Omega .
\end{align}
The function $u$ exhibits well-known singularities at non-smooth boundary points of the domain. Locally near an edge or a corner, $\Omega$ is of the form $\mathbb{R}_+ \times \mathcal{K}$, where the base $\mathcal{K} \subset S^2$ is a smooth or polygonal subset of the sphere. The solution may be decomposed into a leading part given by explicit singular functions plus less singular terms \cite{hp,kokotov, kokotov3, plamenevskii}. We refer to \cite[Theorem 7.4 and Remark 7.5]{kokotov} for details in the case of the Neumann problem in a wedge, respectively \cite[Theorem 4.1]{kokotov3} for the Dirichlet problem in a cone and state the decomposition in terms of polar coordinates $(r,\theta)$ centered at the vertex $(0,0,0)$: 
\begin{align}\nonumber
u(t,x) &=u_{0}(t,r, \theta) + \chi(r)r^{\lambda} a(t,\theta) +\tilde{\chi}(\theta)b_{1}(t,r)(\sin(\theta))^{\nu}\\& \qquad + \tilde{\chi}(\textstyle{\frac{\pi}{2}}-\theta)b_{2}(t,r)(\cos(\theta))^{\nu} \ ,\\
\nonumber
\partial_n u (t,x)&=\psi_{0}(t,r, \theta) + \chi(r)r^{\lambda-1} a(t,\theta) +\tilde{\chi}(\theta)b_{1}(t,r)r^{-1}(\sin(\theta))^{-\nu}\\& \qquad + \tilde{\chi}(\textstyle{\frac{\pi}{2}}-\theta)b_{2}(t,r)r^{-1}(\cos(\theta))^{-\nu} \ . \label{decomposition}
\end{align}
Here, $\nu = \frac{\pi}{\alpha}$, where $\alpha$ is the opening angle of the wedge, and $\lambda = -\frac{1}{2}+\sqrt{\frac{1}{4} + \mu}$, where $\mu$ is the smallest eigenvalue of the Laplace-Beltrami operator with Dirichlet boundary conditions  in the subdomain $\mathcal{K}$ of the sphere. $\chi$, $\tilde{\chi}$ are cut-off functions and $a, b_{j}$ sufficiently regular.
For generic problems the functions $a, b_1, b_2$ are not identically zero.

From the representation formula, the wave equation translates into the boundary integral equation $V \phi = f$, with $f=(1-\mathcal{K}) g$ and solution $\phi = \partial_n u|_\Gamma$.

The main theorem concerning the approximation of $\phi$  is:
\begin{theorem}\label{starthm}
Assume that the coefficient functions $a, b_1, b_2$ are not identically $0$. Then $E(\phi, h, \Delta t) \simeq \max\{h, \Delta t\}^{\max\{\nu-\frac{1}{2}, \lambda\}}$. 
\end{theorem}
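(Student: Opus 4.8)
The plan is to reduce the claim to a best-approximation analysis of the singular expansion \eqref{decomposition} of $\phi = \partial_n u|_\Gamma$, treating the spatial and temporal discretizations separately. Since the best approximation is sought in the tensor-product space $W^p_{h,\Delta t} = V^p_h \otimes W_{\Delta t}$, I would first use the stability of the componentwise projections (as in Lemma \ref{pproj} and \cite{Glaefke}) to split
$$\|\phi - \Pi_{h,\Delta t}\phi\|_{2,0,\Gamma} \lesssim \|\phi - \Pi_h \phi\|_{2,0,\Gamma} + \|\phi - \Pi_{\Delta t}\phi\|_{2,0,\Gamma},$$
so that the temporal factor is controlled by cubic-spline approximation in the $H^2_\sigma(\mathbb{R}^+,\cdot)$-norm. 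Because the coefficient functions $a, b_1, b_2$ are smooth in $t$, the temporal term is of order $(\Delta t)^2$, hence of higher order than the expected rate and never dominant; the rate is therefore governed by the spatial $L^2(\Gamma)$ best-approximation of the singular functions, the time-dependent coefficients entering only through their finite $H^2_\sigma$-norms. This is why only $\max\{h,\Delta t\}$ and the spatial exponents $\lambda, \nu$ appear in the statement.

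For the spatial analysis I would treat the three singular contributions of \eqref{decomposition} separately, the vertex term $\chi(r) r^{\lambda-1}a(t,\theta)$ and the two edge terms, the regular remainder $\psi_0$ being approximated to higher order. For the \emph{upper bound} I would use a dyadic decomposition of $\Gamma$ into annuli (resp.~strips) at distance $\sim k\,h$ from the vertex (resp.~the singular edge): on the single layer of elements touching the singularity the function is approximated by $0$, contributing $\|r^{\lambda-1}\|_{L^2}\sim h^{\lambda}$ at the vertex and $\|\rho^{\nu-1}\|_{L^2}\sim h^{\nu-\frac12}$ transversally along the edges; on the pieces away from the singularity the function is smooth and a local polynomial interpolation estimate, summed over $k$, contributes at the same or higher order. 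Combining the three terms gives $E(\phi,h,\Delta t)\lesssim \max\{h,\Delta t\}^{\max\{\nu-\frac12,\lambda\}}$.

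The decisive part is the matching \emph{lower bound}, where the hypothesis $a, b_1, b_2 \not\equiv 0$ is essential. The mechanism is that, for each fixed $t$, the restriction of the global best approximation to an element $T$ of size $\sim h$ is itself a polynomial of degree $p$, so the global error dominates the local best-approximation error $\inf_P\|\phi(t,\cdot)-P\|_{L^2(T)}$. I would take $T$ at the vertex, rescale it to a reference element $\hat T$, and observe that after factoring out the scaling $h^{\lambda-1}$ the rescaled function is, to leading order, the fixed non-polynomial $\hat r^{\lambda-1}\hat a$. Since the $L^2(\hat T)$-distance of this function to polynomials of degree $p$ is a strictly positive constant (as $\lambda \notin \mathbb{N}$ and $a\not\equiv 0$), undoing the scaling yields $\inf_{P}\|\phi - P\|_{L^2(T)} \gtrsim h^{\lambda}$ on a time interval of positive measure, and hence $E \gtrsim h^{\lambda}$ after integration in $t$. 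An analogous one-dimensional scaling argument transverse to a singular edge, using $b_1\not\equiv 0$ (or $b_2$) and the transverse behaviour $\rho^{\nu-1}$, gives $E\gtrsim h^{\nu-\frac12}$; taking the larger exponent completes the lower bound.

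The main obstacle I anticipate is making the lower-bound localization rigorous in the presence of the angular coefficients and the less singular terms: one must choose the reference element inside an angular sector where $a$ (resp.~$b_j$) is bounded away from zero, verify that the subleading contributions $\psi_0$ and the remaining singular terms are genuinely of higher order on $T$ and can be absorbed, and confirm that $r^{\lambda-1}a(t,\theta)$ cannot be cancelled by a degree-$p$ polynomial uniformly in $t$. A secondary technical point is keeping the constants in the scaling arguments uniform in the time variable, so that the full $H^2_\sigma(\mathbb{R}^+,L^2(\Gamma))$-norm, rather than a pointwise-in-$t$ estimate, produces the stated two-sided bound.
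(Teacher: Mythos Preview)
Your approach is correct in spirit and takes a genuinely different route from the paper. The paper works on a rectangular mesh $Q=\bigcup_{k,l}Q_{kl}$ with the explicit piecewise-constant-in-space projection $\Pi^0_{x,y}$ (via Lemma~\ref{lemma3.4}), and carries out element-by-element $L^2$ computations for each singular term: the corner contribution is summed over dyadic annuli with bookkeeping via $\zeta(3-2\lambda)$, and the two edge types are handled by explicit one-dimensional integrals $\|y^{\nu-1}-p_2\|_{L^2}$ etc. The lower bound is obtained by explicitly minimising $\|a(t,\theta)r^{\lambda-1}-c(t)\|^2_{L^2(Q_{11})}$ over constants $c$, restricting to an angular sector where $a$ is bounded away from zero; the passage to triangular meshes and higher $p$ is then deferred to the references \cite{disspetersdorff,petersdorff}. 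Your plan replaces these explicit calculations by a dyadic upper bound (essentially the same decomposition, but argued abstractly) and, for the lower bound, a scaling/homogeneity argument on a single vertex element, using that $\hat r^{\lambda-1}\hat a$ has positive $L^2$-distance to $\mathbb{P}_p$ on the reference element. This buys you generality in $p$ and mesh geometry without the detour through rectangles, at the cost of less explicit constants; the paper's computation, in turn, makes the mechanism for the mixed edge--corner term $x^{\lambda-\nu}y^{\nu-1}$ completely transparent, which your sketch does not single out.

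Two small points. First, your statement that the temporal error is ``of order $(\Delta t)^2$'' presumes enough time-regularity of $a,b_j$ so that cubic-spline approximation in the $H^2_\sigma$-norm actually yields this rate; the paper avoids this by treating space and time together via Lemma~\ref{lemma3.4} rather than splitting. Second, ``taking the larger exponent'' in your last line is a weaker conclusion than what your two separate lower bounds $E\gtrsim h^{\lambda}$ and $E\gtrsim h^{\nu-1/2}$ actually give; combined they yield $E\gtrsim h^{\min\{\lambda,\nu-1/2\}}$, which matches the paper's final displayed lower bound (note the paper itself writes $\min$ there, not $\max$).
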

In particular, hypothesis \eqref{star} is satisfied. A similar result in the elliptic case was known if $\Gamma$ is a curve, i.e. in dimension $2$ \cite{cc}.

The key step in the proof of Theorem \ref{starthm} is to show the result for bilinear basis functions on a rectangular mesh.  For simplicity of notation, we restrict to one boundary face of $\Gamma$ and assume it is given by $Q \times \{0\}$ with corner of the domain at $(0,0,0)$, where $Q=(0,1)^2=\bigcup_{kl} Q_{kl}$, $Q_{kl} = [x_{k-1}, x_k)\times [x_{l-1}, x_l)$ and $x_k = kh$. 

Recall the following estimate from \cite{graded}:
\begin{lemma}\label{lemma3.4}
	Let $-1\leq s\leq 0,\; {0\leq r\leq \rho \leq p+1},\; Q=[0,h_1]\times[0,h_2],\; u\in H^{\rho}_\sigma([0,\Delta t], H^1(R))$, $\Pi_t^{{p}} u$ the orthogonal projection onto piecewise polynomials in $t$ of order {$p$}, and $\Pi_{x,y}^0$ the orthogonal projection onto piecewise constant polynomials in space, $\Pi_{x,y}^0 u=\frac{1}{h_1 h_2}\int\limits_Q u(t,x,y)  dy\, dx$. Then for ${U} =  \Pi_t^{{p}} \Pi^0_{x,y} u$ we have 
	\begin{align}\label{3.22}
		\| u-{U}\|_{r,s,Q,\ast}&\lesssim (\Delta t)^{\rho-r}{\max\{h_1,h_2,\Delta t \}^{-s}}\|\partial_t^\rho u\|_{L^2([0,\Delta t]\times Q)} \\ & \qquad+  \max\{h_1,h_2,\Delta t \}^{-s}\left(h_1 \| u_x\|_{L^2([0,\Delta t] \times Q)}  + h_2 \| u_y\|_{L^2([0,\Delta t] \times Q)} \right)\ . \nonumber
	\end{align}
	If $u(t,x,y) =u_1(t,x)u_2(y),\; u_1\in H^{\rho}_\sigma( [0,\Delta t], H^1([0,h_1])), \; u_2\in H^1( [0,h_2])$ then 
	\begin{align*}
		\| u-{U}\|_{r,s,Q,\ast}& \lesssim (\Delta t)^{\rho-r}{\max\{h_1,\Delta t \}^{-s}}\|\partial_t^\rho u\|_{L^2([0,\Delta t]\times Q)} \\ & \qquad+\left(h_1^{1-s}\| u_x\|_{{L^2}([0,\Delta t] \times Q)} + h_2^{1-s}\| u_y\|_{{L^2}([0,\Delta t] \times Q)} \right)\ .
	\end{align*}
\end{lemma}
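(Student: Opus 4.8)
The plan is to work one Laplace frequency at a time and to exploit that $U=\Pi_t^{p}\Pi_{x,y}^0 u$ is built from two projections acting on disjoint variables. Since these commute, I would split
$$u-U=(I-\Pi_t^{p})u+\Pi_t^{p}\,(I-\Pi_{x,y}^0)u$$
and treat the temporal and the spatial contributions separately. By Definition \ref{sobdef} the norm $\|\cdot\|_{r,s,Q,\ast}$ is the $|\omega|^{2r}$–weighted integral over the line $\{\mathrm{Im}\,\omega=\sigma\}$ of the spatial norms $\|\cdot\|_{s,\omega,Q,\ast}$, so it suffices to bound the transformed error in $\|\cdot\|_{s,\omega,Q,\ast}$ for each fixed $\omega$ and then integrate back.

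The key ingredient is a negative–order estimate for the elementwise $L^2$–projection. Since $s\le 0$ and $w:=(I-\Pi_{x,y}^0)\hat u(\omega)$ is supported on $Q$, whose diameter is $\simeq\max\{h_1,h_2\}$, the scale of the support controls the negative spatial norm,
$$\|w\|_{s,\omega,Q,\ast}\lesssim \min\{|\omega|^{-1},\max\{h_1,h_2\}\}^{-s}\,\|w\|_{L^2(Q)}\ ,$$
which I would prove by a scaling argument together with the elementary bound $(|\omega|^2+|\xi|^2)^{s}\le\min\{|\omega|^{2s},|\xi|^{2s}\}$ (valid because $s\le0$); the restriction $-1\le s\le0$ guarantees that only a first–order duality quotient is needed. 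Combining this with the Poincaré estimate $\|(I-\Pi_{x,y}^0)v\|_{L^2(Q)}\lesssim h_1\|v_x\|_{L^2(Q)}+h_2\|v_y\|_{L^2(Q)}$ handles the spatial term. For the temporal term I would apply the Bramble–Hilbert / Peano–kernel bound $\|(I-\Pi_t^{p})u\|_{H^{r}([0,\Delta t];X)}\lesssim(\Delta t)^{\rho-r}\|\partial_t^{\rho}u\|_{L^2([0,\Delta t];X)}$ with $X=\widetilde{H}^s(Q)$, using $0\le r\le\rho\le p+1$, and again bound the $X$–norm by the support–scale estimate above.

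I expect the main obstacle to be the coupling between the temporal frequency $\omega$ and the spatial mesh width that is hidden in the weight $(|\omega|^2+|\xi|^2)^{s}$: the genuine content of the lemma is the per–frequency gain $\min\{|\omega|^{-1},\max\{h_1,h_2\}\}^{-s}$ and its uniformity in $\sigma$, in particular in the low–frequency regime $|\omega|\sim\sigma$ where $|\omega|^{-1}$ is large. Once this is in hand, the passage to the clean, uniform factor $\max\{h_1,h_2,\Delta t\}^{-s}$ stated in the lemma is easy, since
$$\min\{|\omega|^{-1},\max\{h_1,h_2\}\}\ \le\ \max\{h_1,h_2\}\ \le\ \max\{h_1,h_2,\Delta t\}$$
and $x\mapsto x^{-s}$ is nondecreasing for $-s\ge0$; thus the per–frequency bound is in any case no larger than $\max\{h_1,h_2,\Delta t\}^{-s}$, and the two terms of the stated estimate follow after integrating in $\omega$ and invoking the temporal approximation bound.

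For the tensor–product refinement $u(t,x,y)=u_1(t,x)\,u_2(y)$ I would not pass through the isotropic Poincaré step but instead run the duality argument separately in the $x$– and the $y$–direction, so that each direction contributes its own factor to the negative norm. This replaces the common weight by the directional weights $h_1^{1-s}$ and $h_2^{1-s}$ and removes $\Delta t$ from the spatial part, while the temporal factor is unchanged, because $u_1$ carries the entire time dependence and is still approximated on $[0,\Delta t]$ exactly as before.
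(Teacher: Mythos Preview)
The paper does not prove this lemma: it is quoted verbatim from \cite{graded} with the phrase ``Recall the following estimate from \cite{graded}'', and no argument is given here. So there is no proof in the present paper to compare against.

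That said, your sketch is along the natural lines one would expect for such a result. The splitting $u-U=(I-\Pi_t^{p})u+\Pi_t^{p}(I-\Pi_{x,y}^0)u$ and the idea of gaining the factor $\max\{h_1,h_2\}^{-s}$ from the spatial support of the error via the weight $(|\omega|^2+|\xi|^2)^{s}$ are both standard and correct for $-1<s\le 0$ (the endpoint $s=-1$ needs a little more care in two space dimensions, where the low-frequency integral becomes borderline). Two points deserve attention if you turn this into a full proof. First, the norm $\|\cdot\|_{r,s,Q,\ast}$ carries an $\omega$-dependent spatial norm, so applying the Bramble--Hilbert bound with a \emph{fixed} target space $X=\widetilde H^s(Q)$ is not literally the same as bounding $\|(I-\Pi_t^{p})u\|_{r,s,Q,\ast}$; you need to explain why, for $s\le 0$, the monotonicity $\|\cdot\|_{s,\omega}\le\|\cdot\|_{s,\sigma}$ together with $|\omega|\ge\sigma$ lets you decouple the two. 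Second, your ``support--scale'' bound $\|w\|_{s,\omega,Q,\ast}\lesssim \max\{h_1,h_2\}^{-s}\|w\|_{L^2(Q)}$ holds for any $w$ supported in $Q$ (not just mean-zero ones) by the low/high-frequency split of $\hat w$, so it indeed applies to $\partial_t^{\rho}u$; but the sharper claim with $\min\{|\omega|^{-1},\max\{h_1,h_2\}\}^{-s}$ that you state first is not needed and is the harder inequality to justify---you immediately discard it anyway. For the tensor-product refinement, your plan to run the duality in each coordinate separately is the right idea and is what produces the anisotropic factors $h_1^{1-s}$, $h_2^{1-s}$.
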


\begin{proof}[Proof of Theorem \ref{starthm}] We use the upper bound $E(\phi, h, \Delta t) \lesssim \max\{h, \Delta t\}^{\max\{\nu-\frac{1}{2}, \lambda\}}$ and first approximate the corner singularity $f=r^{\lambda-1}a(t,\theta)$, $a \in H^\rho_\sigma(\mathbb{R}^+, H^1([0,\pi/2]))$. With $\left. p\right|_{Q_{kl}}= \Pi_t^{{p}} \Pi^0_{x,y} f$ and Lemma \ref{lemma3.4} one obtains
\begin{align}\label{3.23}
	\qquad \Vert f-p\Vert^2_{0,0,Q}\lesssim \sum\limits^N_{k,l=1\atop k+l\neq 2} &\left((\Delta t)^{2\rho}\|\partial_t^\rho f\|_{0,0,Q_{kl}}^2+h_k^2 \Vert f_x\Vert^2_{0,0,Q_{kl}} + h_l^2 \Vert f_y\Vert^2_{0,0,Q_{kl}} \right) \nonumber \\ &+ \Vert f-p\Vert_{0,0,Q_{11}}^{2}\ .
\end{align}
For $k\geqslant 2,\; l\geqslant 2$ the following estimate holds, with the zeta function $\zeta(s)$: 
\begin{align*}
	\sum_{k,l=2}^{N} h_k^2\Vert f_x\Vert^2_{0,0,Q_{kl}} &= \sum_{k=2}^{N} h_{k}^{2} \int_0^\infty \int_{\phi=0}^{\pi/2} \tilde{w}(t,\theta)^{2} d\theta\ d_\sigma t \int_{\sqrt{2}kh}^{\sqrt{2}(k+1)h} r^{2  \lambda-4} r dr \\ &\lesssim  \sum_{k=2}^{\infty} h^{2  \lambda} k^{2  \lambda -3} = c  h^{2  \lambda} \zeta(3- 2  \lambda) \ .
\end{align*}
Here we have used 
\begin{equation*}
 | f_{x}(t,x,y) | \leqslant r^{  \lambda-2} \tilde{w}(t, \theta)
\end{equation*}
with $\tilde{w} \in H^\rho_\sigma(\mathbb{R}^+, H^0([0, \pi/2]))$. \\

For $k=1,\, l>1$ (and analogously for $k>1$, $l=1$), in \eqref{3.23} we compute that
\begin{align*}
	\sum\limits_{l=2}^N h_1^2\Vert f_x\Vert^2_{0,0,Q_{1l}} 
		&\leqslant h_{1}^2\int_0^\infty \int\limits_{x=0}^{h_1}\int\limits_{y=0}^1\vert f_x(t,x,y)\vert^2 dy \, dx \, d_\sigma t\\
		& \leqslant h_{1}^2\int_0^\infty \int\limits_{r=h}^{\sqrt{2}} \int\limits_{\phi=0}^{\pi/2} a^{2}(t,\theta) d\theta\ r^{2 \lambda -3} dr \ d_\sigma t\\
	& \lesssim h_{1}^{2} + h_{1}^{2  \lambda}\ .
\end{align*}
Finally, in the corner $k=1$, $l=1$ the singular function $f\in H^\rho_\sigma(\mathbb{R}^+, H^0(Q_{11}))$, because $\lambda >0$. 
Now
\begin{equation*}
	\Vert f-p\Vert_{0,0,Q_{11}} \lesssim \Vert f\Vert_{0,0,Q_{11}} \simeq h_1^{\lambda} \ .
\end{equation*}

We now consider the approximation of the edge singularities, which are of two types, fixing $Q=[0,1]^2$ with singularity at the $x$-axis:
\begin{enumerate}
\item[(i)] $f(t,x,y)=\chi(x)b(t,x) y^{\nu-1}$ with  edge intensity factor  $b\in H^\rho_\sigma(\mathbb{R}^+, H_0^1(\mathbb{R}^+))$, 
\item[(ii)] $f(t,x,y) = \chi(x)b(t)x^{ \lambda - \nu}y^{\nu-1}$ with a corner singularity in the edge intensity factor.
\end{enumerate} 
We have
\begin{equation}\label{3.25}
	\Vert f-p\Vert^2_{0,0,Q} \lesssim \sum\limits_{k,l=1}^N \Vert f-p\Vert^2_{0,0,Q_{kl}}\ .
\end{equation}
First consider case (ii), where the time dependence factors out. Define 
\begin{equation*}
 p_{2}= { \frac{1}{h_{2}}} \int_{I_{j-1}^{*}} f_{2}(y) dy, \quad p_{1}=\frac{1}{h_{1}} \int_{I_{j}} f_{1}(x) dx,
 \quad f_{2}(y)=y^{\nu-1}, \quad f_{1} = x^{ \lambda -\nu}, 
\end{equation*}
where $h=h_{1}=h_{2}$, $ I_{j}=[x_{j-1},x_{j}]$ and $ I_{k}^{*}=[0,x_{k}]$. Then one computes
\begin{align*}
 \lVert f_{2}- p_{2} \rVert_{L^2 (0,h)}^{2} &= \frac{(\nu-1)^{2}}{(1+(\nu-1)^2)(2 (\nu-1)+1)} h^{2(\nu-1)+1} \\
 \lVert f_{2}-p_{2} \rVert_{L^{2}(a,a+h)}^2 &= a^{2(\nu-1)+1} \eta({\textstyle\frac{h}{a}}) , a > 0 , \text{ where}\\
 \eta(\delta)&= \frac{(1+\delta)^{2 (\nu-1) +1}-1}{2 (\nu-1)+1} - \frac{[(1+\delta)^{(\nu-1)}-1]^{2}}{\delta (1+(\nu-1))^{2}}  
\end{align*}
and $\delta >0$. With $ Q_{j}^{*}=\cup_{l=1}^{j-1} Q_{jl} = I_{j} \times I_{j-1}^{*}$, one notes
\begin{align*}
 \lVert f_{2}-p_{2} \rVert_{L^{2}(I_{j-1}^{*})}^{2} \lVert p_{1} \rVert_{L^{2}(I_{j})}^{2}
 &= h^{2 (\nu-1) +1} \Big[ \frac{(\nu-1)^{2}}{(1+(\nu-1)^2)(2 (\nu-1)+1)} + \\ & \sum_{l=1}^{j-1} 
 ({\textstyle\frac{x_{l}}{h}})^{2 (\nu-1)+1} \eta({\textstyle\frac{h_{2}}{x_{l}}})\Big] \int_{x_{j-1}}^{x_{j}} \frac{1}{h_{1}^{2}} 
 \left(\int_{x_{j-1}}^{x_j} x^{ \lambda-\nu} dx\right)^{2} dx
\end{align*}
and
\begin{equation*}
 \int_{x_{j-1}}^{x_{j}} \frac{1}{h_{1}^{2}}\left(\int_{x_{j-1}}^{x_j} x^{ \lambda-\nu} dx\right)^{2} dx = x_{j-1}^{2  \lambda-2 \nu+1} 
 \frac{x_{j-1}}{h} \frac{[(1+ \frac{h}{x_{j-1}})^{ \lambda-\nu+1}-1]^{2}}{( \lambda-\nu+1)^{2}}\ .
\end{equation*}
As $ \eta({\textstyle\frac{h}{x_{l}}})\lesssim{\textstyle\frac{h^{3}}{x_{l}^{3}}}$ and
\begin{align*}
 \frac{1}{\delta} \frac{[(1+ \delta)^{ \lambda-\nu+1}-1]^{2}}{( \lambda-\nu+1)^{2}}
 &= \frac{1}{\delta}\frac{[( \lambda-\nu+1) \delta+( \lambda-\nu+1)( \lambda-\nu) \delta^{2}+O(\delta^{3})]}{( \lambda-\nu+1)^{2}} \\
 &= \delta + ( \lambda-\nu+1) \delta^{2} + O(\delta^{3})\ ,
\end{align*}
we obtain
\begin{align*}
 &\sum_{j=2}^{N} \lVert f_{2}-p_{2} \rVert_{L^{2}(I_{j-1}^{*})}^{2} \lVert p_{1} \rVert_{L^{2}(I_{j})}^{2}
 = \sum_{j=2}^{N} h^{2 (\nu-1) +1 } [\frac{(\nu-1)^{2}}{(1+(\nu-1)^2)(2 (\nu-1)+1)} \\ &+ 
 \sum_{l=1}^{j-1} ({\textstyle \frac{x_{l}}{h}})^{2 (\nu-1) -2}] [(j-1)h]^{2  \lambda- 2 \nu+1} 
 \frac{(j-1)[(1+\frac{1}{j-1})^{ \lambda-\nu+1}-1]^{2}}{( \lambda - \nu +1)^{2}}\ .
\end{align*}
The estimate
\begin{align*}
 \sum_{j=2}^{N} \lVert f_{2}-p_{2} \rVert_{L^{2}(I_{j-1}^{*})}^{2} \lVert p_{1} \rVert_{L^{2}(I_{j})}^{2}
 &= c \sum_{j=2}^{N} h^{2 (\nu-1)+1} (j-1)^{2  \lambda-2 \nu} h^{2  \lambda -2 \nu+1} \\
 &=  h^{2 (\nu-1)+1} h^{2  \lambda -2 \nu+1} \sum_{k=1}^{N-1} k^{2  \lambda-2 \nu} 
 \lesssim \zeta(2\nu-2\lambda) h^{2  \lambda}
\end{align*}
follows and may be integrated in time.\\
Now consider
\begin{align*}
\lVert f_{1}-p_{1} \rVert_{L^{2}(I_{j})}^{2}  &= \lVert x^{ \lambda-\nu} - 
{\textstyle\frac{1}{h} \int_{I_{j}} x^{ \lambda -\nu} dx} \rVert_{L^{2}(I_{j})}^{2} \\
&= \begin{cases}
     a^{2 ( \lambda -\nu) +1} \eta(\frac{h}{a}), & a > 0 \\
     \frac{( \lambda -\nu)}{(1+( \lambda -\nu)^{2})(2( \lambda -\nu)+1)}
     h^{2 ( \lambda -\nu)+1} , & a=0\ .
  \end{cases}
\end{align*}
For $ j \geq 2$
\begin{align*}
 \lVert f_{1}-p_{1} \rVert_{L^{2}(I_{j})}^{2} = {\textstyle\frac{x_{j}^{2( \lambda -\nu)+1}}{h^{2( \lambda -\nu)+1}}}
 \eta\left({\textstyle\frac{h}{x_{j}}}\right) h^{2( \lambda -\nu)+1} 
 = ({\textstyle \frac{x_{j}}{h}})^{2( \lambda -\nu)-2} h^{2( \lambda -\nu)+1}, 
\end{align*}
\begin{equation*}
 \lVert f_{2} \rVert_{L^{2}(I_{j-1}^{*})}^{2} = \int_{0}^{x_{j-1}} y^{2 \nu -2} dy \lesssim x_{j-1}^{2 \nu-1},
\end{equation*}
and
\begin{align*}
 \lVert f_{1}-p_{1} \rVert_{L^{2}(I_{j})}^{2} \lVert p_{2} \rVert_{L^{2}(I_{j-1}^{*})}^{2}
 &= ({\textstyle\frac{x_{j}}{h}})^{2 ( \lambda -\nu)-2} ({\textstyle\frac{x_{j-1}}{h}})^{2 \nu-1} 
 h^{2( \lambda -\nu)+1} h^{2 \nu-1} \\ &\leqslant 
 ({\textstyle\frac{x_{j}}{h}})^{2 ( \lambda -\nu)-2+2 \nu-1} h^{2 ( \lambda -\nu)+1 +2\nu-1}.
\end{align*}
We have
\begin{align*}
 \sum_{j=2}^{N} \lVert f_{1}-p_{1} \rVert_{L^{2}(I_{j})}^{2} \lVert f_{2} \rVert_{L^{2}(I_{j-1}^{*})}^{2}
 &= c \sum_{j=1}^{\infty} j^{2( \lambda -\nu)-2+2\nu-1} 
 h^{2 \lambda-2 \nu+1+2 \nu-1} = c \zeta(3-2  \lambda) h^{2  \lambda}\ .
\end{align*}
Again, this estimate may be integrated in time.\\
We now consider case (i): $f(t,x,y) = b(t,x) y^{\nu-1}+(\chi(x)-1)b(t,x)y^{\nu-1} =: f_{1}+f_{2}$ for $ \nu > \frac{1}{2}$ and 
$ Q=[0,1]^{2}=I \times I $, with $I=[0,1]$.
Again we define
\begin{equation*}
 q_{1} = \Pi^p_t \int_{0}^{1} b(t,x) dx , q_{2} = \int_{0}^{1} y^{\nu-1} dy \ .
\end{equation*}
Note that with $f_{2}(t,x,y)=(\chi(x)-1)b(t,x)y^{\nu-1} \in H^0_\sigma(\mathbb{R^+},H^{1}(Q))$ we get
\begin{equation*}
 \lVert f_{2}-q_{1}q_{2} \rVert_{0,0,Q}^{2} \lesssim h^{2} \ .
\end{equation*}
Since 
\begin{align*}
 \lVert y^{\nu-1} -q_{2} \rVert_{L^2(I)}^2 \simeq h^{2 \nu-1},
 \lVert y^{\nu-1} \rVert_{L^2(I)}^{2} \leq c 
\end{align*}
and
 $$\lVert b - q_{1} \rVert_{0,0,I}^2 \lesssim \max\{h,\Delta t\}^{2} \left(\lVert {\partial_t} b \rVert_{0,0,I}^{2}+ \lVert {\partial_x} b \rVert_{0,0,I}^{2}\right),$$
we have
\begin{align}
\nonumber &\lVert b(t,x) y^{\nu-1}-q_1(t,x) q_{2}(y) \rVert_{0,0,Q}^2 \\&\lesssim \lVert b \rVert_{0,0,I}^{2}
 \lVert y^{\nu-1}-q_{2} \rVert_{L^{2}(I)}^{2} + \lVert y^{\nu-1} \rVert_{L^{2}(I)}^{2} 
 \lVert b - q_{1} \rVert_{0,0,I}^{2}\lesssim h^{2 \nu-1} + \max\{h,\Delta t\}^{2}  \ . \label{nuineq}
\end{align}

For the lower bound, we first consider the error in $Q_{11}$ resulting from the corner singularity. There the error of approximation by a spatially constant function $c$ is given by
$$\|a(t, \theta)r^{\lambda-1}-c(t)\|_{0, 0, Q_{11}}^2 \gtrsim \int_0^\infty \int_{0}^h \int_{0}^{\pi/2}r (a(t, \theta)r^{\lambda-1}-c)^2 d\theta\ dr\ d_\sigma t\ .$$ If $a \neq 0$, we may find a small intervall $(\theta_0-\delta, \theta_0+\delta)$ on which $a$ is nonzero for a time interval $I$. We estimate
$$\|a(t,\theta)r^{\lambda-1}-c\|_{0,0, Q_{11}}^2  \gtrsim \int_I \int_{0}^h \int_{\theta_0-\delta}^{\theta_0+\delta}r (a(t,\theta)r^{\lambda-1}-c)^2 d\theta \ dr \ d_\sigma t\ .$$
As $a$ is a restriction of an eigenfunction of the Laplace-Beltrami operator, it is smooth, and up to higher order terms {(h.o.t.)} in $h$ we compute 
\begin{align*}
&\int_I \int_{0}^h \int_{\theta_0-\delta}^{\theta_0+\delta}r (a(t,\theta)r^{\lambda-1}-c)^2 d\theta \ dr\ d_\sigma t\\&= \int_I\int_{0}^h \int_{\theta_0-\delta}^{\theta_0+\delta}r(a(t,\theta_0)r^{\lambda-1}-c)^2 d\theta\ dr \ d_\sigma t  + h.o.t.\\
&=2 \delta \int_I \int_{0}^h r(a(t,\theta_0)r^{\lambda-1}-c)^2 dr \ d_\sigma t + h.o.t.
\end{align*}
Now we may explicitly compute the infimum over $c$:
\begin{align*}
\int_I \int_{0}^h r (a(t,\theta_0)r^{\lambda-1}-c)^2 dr &\geq C_\sigma |I| \min_{t \in I} a(t, \theta_0)^2\frac{(\lambda-1)^2}{2 \lambda (\lambda+1)^2} h^{2 \lambda}\ .
\end{align*}
This lower order bound for the error of order $h^\lambda$ matches with the upper bound from above.\\

An analogous argument for the regular edge intensity factor, case (i), shows that the inequality \eqref{nuineq} is an equality up to higher order terms in $h$. We therefore obtain 
$$ \lVert b(x) y^{\nu-1}-q_1(x) q_{2}(y) \rVert_{0,0,Q} \gtrsim  h^{\nu-\frac{1}{2}} + h.o.t.$$
In a neighborhood of the edge or corner, the solution is given by its singular expansion, and the approximation error coincides with the approximation error for the singular functions, $b(t,x) y^{\nu-1}$, $y_i^{ \lambda - \nu}\rho^{\nu-1}$, respectively $a(t,\theta)r^{\lambda-1}$, up to lower order terms in $h$. We conclude that 
$$E(\phi, h, \Delta) = \Vert \phi - \Pi_{h, \Delta t} \phi  \Vert_{2,0,\Gamma} \gtrsim \max\{h, \Delta t\}^{\min\{\nu-\frac{1}{2}, \lambda\}} + h.o.t.\ .$$  
References \cite{disspetersdorff, petersdorff} show how to deduce approximation results for piecewise linear functions on triangular meshes from piecewise bilinear functions on rectangles. Altogether, the proof of Theorem  \ref{starthm} is complete.
\end{proof}

\section{Algorithmic details} \label{algo}
The a posteriori error estimate from Theorem A leads to an adaptive mesh refinement procedure, based on the four steps:
 \begin{align*}
  \textbf{SOLVE}&\longrightarrow \textbf{ESTIMATE}\longrightarrow \textbf{MARK}\longrightarrow \textbf{REFINE}.
\end{align*}
The precise algorithm is given as follows:\\

\noindent \textbf{Adaptive Algorithm:}\\
Input: Spatial mesh $\mathcal{T}=\mathcal{T}_0$, refinement parameter $\theta  \in (0, 1)$, tolerance $\epsilon > 0$, data $f$.
\begin{enumerate}
\item Solve $\mathcal{V} \dot{\varphi}_{h,\Delta t}=\dot{f}$ on $\mathcal{T}$.
\item Compute the error indicators $\eta(\bigtriangleup)$ in each triangle $\bigtriangleup\in \mathcal{T}$.
\item Find $\eta_{max}=\max_{\bigtriangleup} \eta(\bigtriangleup)$.
\item Stop if $ \sum_i \eta^2(\vartriangle_i) < \epsilon^2$.
\item Mark all $\bigtriangleup\in \mathcal{T}$ with $\eta(\vartriangle_i)> \theta \eta_{max}$.
\item Refine each marked triangle into 4 new triangles to obtain a new mesh $\mathcal{T}$\\ Choose $\Delta t$ such that $\frac{\Delta t}{\Delta x} \leq 1$ for all triangles.
\item Go to 1.
\end{enumerate}
Output: Approximation of $\dot{\varphi}$.\\

In the first step, we solve $V \dot{\varphi} = \dot{f}$ using the Galerkin discretization \eqref{DPdisc} 
in $V^{1,1}_{h, \Delta t}$.
The Galerkin solution has the form
\begin{equation*}
\dot{\varphi}_{h,\Delta t}(x,t)=\sum_{m=1}^{N_t}\sum_{i=1}^{N_s} \varphi_i^m \beta^m(t)\xi_i(x)\ ,
\end{equation*}
where $\beta^m$ is the piecewise linear hat function in time associated to time $t_m$,
\begin{align*}
\beta^m(t)=(\Delta t)^{-1}((t-t_m)\chi_{[t_{m-1},t_{m}]}(t)-(t-t_{m+1})\chi_{[t_{m},t_{m+1}]}(t)),
\end{align*}
and $\xi_i$ is the piecewise linear hat function in space associated to node $i$.

As a step towards adaptive mesh refinements in space-time, we here focus on time-integrated error indicators as they are relevant for geometric singularities. As shown in \cite{graded}, for polyhedral meshes and screens time-independent graded meshes lead to quasi-optimal convergence rates in spite of singularities of the solutions.  The time-integrated error indicator for triangle $\bigtriangleup$ is computed as
$$\eta^2(\bigtriangleup) = \textstyle{\sum_{n}}\left\{ \eta_{\bigtriangleup,\nabla_{\Gamma}}(I_n)^2+ \eta_{\bigtriangleup,\partial_t}(I_n)^2\right\}\ .$$
Here, for every triangle $\bigtriangleup$ and every time
interval $I_{n}=[t_{n-1}, t_{n}]$ we define the partial error indicators
\begin{align*}
\eta_{\bigtriangleup,\nabla_{\Gamma}}(I_n)^2 &= h_\bigtriangleup \int_{t_{n-1}}^{t_n} \int_{\bigtriangleup} [\nabla_{\Gamma}(\dot{f}-\mathcal{V}\dot{\varphi}_{h,\Delta t})]^2 ds_x dt\,,\\
\eta_{\bigtriangleup,\partial_t}(I_n)^2 & = \Delta t \int_{t_{n-1}}^{t_n} \int_{\bigtriangleup} [\partial_t (\dot{f}-\mathcal{V}\dot{\varphi}_{h,\Delta t})]^2 ds_x dt\,.
\end{align*}
The time integral is approximated by the trapezoidal rule, and the tangential gradient of a function $F$ is computed as
\begin{equation*}
 \nabla_\Gamma F(t,x)=P_\Gamma \nabla F = \nabla F(t,x)-\nu ( \nu\cdot \nabla F(t,x))\ 
\end{equation*}
with the outer unit normal vector $\nu$ to $\Gamma$, resp.~the projection $P_\Gamma$ onto the tangent bundle of $\Gamma$.\\

To compute $\eta_{\bigtriangleup,\nabla_{\Gamma}}$ from $\dot{\varphi}_{h,\Delta t}$, we consider the gradient of $\mathcal{V} \dot{\varphi}_{h,\Delta t}$ as a singular integral:
\begin{align*}
& \nabla_\Gamma \mathcal{V}\dot{\varphi}_{h,\Delta t}(t,x)\\&= \frac{-1}{4\pi} P_\Gamma \int_\Gamma (x-y)\left(\frac{\dot{\varphi}_{h,\Delta t}(t-|x-y|,y)}{|x-y|^3}+\frac{\ddot{\varphi}_{h,\Delta t}(t-|x-y|,y)}{|x-y|^2} \right) ds_y\, \\
 &= \frac{-1}{4\pi}\sum_{m=1}^{N_t}\sum_{i=1}^{N_s} \varphi_i^m P_\Gamma \int_\Gamma \varphi_i(y)\Big[\beta^m(t-|x-y|)\frac{x-y}{|x-y|^3} \\
 & \qquad + \dot{\beta}^m(t-|x-y|)\frac{x-y}{|x-y|^2} \Big] ds_y .
\end{align*}
Using the explicit form of $\beta^m$, we obtain  
\begin{align*}
 \nabla_\Gamma \mathcal{V}\dot{\varphi}_{h,\Delta t}(t,x) &  = \frac{-1}{4 \pi} \sum_{m=1}^{N_t} \sum_{i=1}^{N_s}  \varphi_{i}^{m} \Bigg[ \frac{t-t_{m-1}}{\triangle t} P_\Gamma \int_{t-t_m \leq |x-y| \leq t-t_{m-1}} \xi_{i}(y)  \frac{x-y}{|x-y|^{3}} ds_y \\
   &\qquad - \frac{t-t_{m+1}}{\triangle t}  P_\Gamma \int_{t-t_{m+1} \leq |x-y| \leq t-t_{m}} \xi_{i}(y) \frac{x-y}{|x-y|^{3}} ds_y \Bigg]\ .
\end{align*}
The integrals are evaluated with a composite hp-graded quadrature, like the entries of the BEM Galerkin matrix in  \eqref{DPdisc}. See  \cite{gimperleinreview} for details.

\section{Towards space-time adaptivity}\label{towardsst}


\textcolor{black}{The previous sections presented an adaptive mesh refinement procedure based on time-independent meshes. While these provide efficient approximations for solutions with time-independent, geometric singularities, wave phenomena naturally include singularities which move in space-time, such as travelling wave crests. }

\textcolor{black}{The time-averaged error indicators used above are inadequate in such settings. However, the underlying a posteriori error estimates still apply and can be used to define residual error indicators $\eta$ in each space-time element. They lead to an adaptive algorithm, as before based on the steps
 \begin{align*}
  \textbf{SOLVE}&\longrightarrow \textbf{ESTIMATE}\longrightarrow \textbf{MARK}\longrightarrow \textbf{REFINE}.
\end{align*}
The precise algorithm with local time stepping is given as follows:}\\

\noindent \textcolor{black}{\textbf{Space--time Adaptive Algorithm:}\\
 Input: Mesh $\mathcal{T}=(\mathcal{T}_S \times \mathcal{T}_T)_0$, refinement parameter $\theta  \in (0, 1)$, tolerance $\epsilon > 0$, data $f$.
\begin{enumerate}
\item Solve $V \dot{\varphi}_{h,\Delta t}=\dot{f}$ on $\mathcal{T}$.
\item Compute the error indicators $\eta(\Box)$ in each space-time prism $\Box \in \mathcal{T}$.
\item Find $\eta_{max}=\max_{\Box} \eta(\Box)$.
\item Stop if $\sum_i \eta^2(\Box_i) < \epsilon^2$.
\item Mark all $\Box\in \mathcal{T}$ with $\eta(\Box) > \theta \eta_{max}$.
\item Refine each marked $\Box$ in space to obtain a new mesh $\mathcal{T}$. If $\frac{\Delta t}{\Delta x} \geq 1$ in a refined element, divide local time step $\Delta t$ by $2$.
\item Go to 1.
\end{enumerate}
}

\begin{figure}[h]
  \centering
   \includegraphics[width=4.8cm]{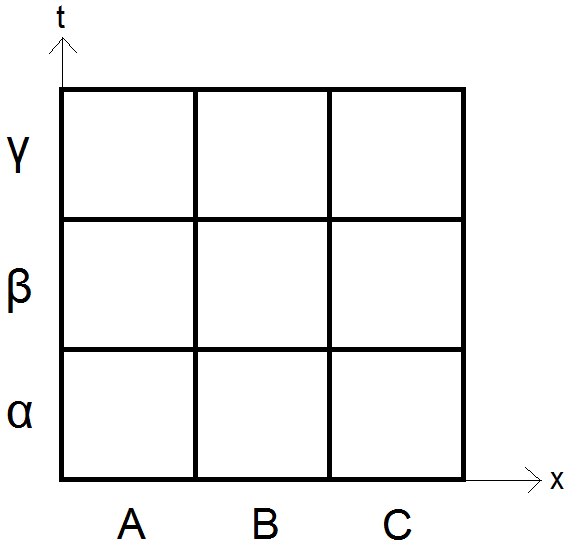}
   \includegraphics[width=5cm]{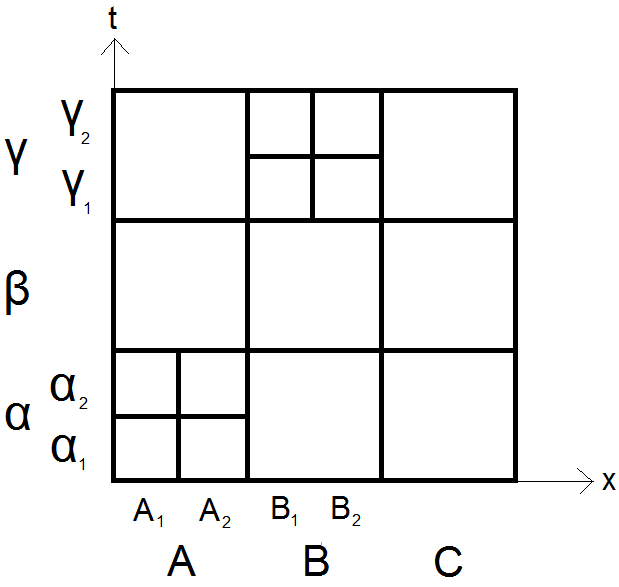}
   \caption{Uniform coarse mesh and local space-time refinement.}\label{spacetime}
\end{figure}

\textcolor{black}{Such fully space--time adaptive mesh refinements based on different error indicators have been explored by M.~Gl\"{a}fke \cite{Glaefke} for 2d problems, and  Figure \ref{spacetime} gives a schematic illustration of the space-time refinement used in his work.}
 
\textcolor{black}{The flexibility of the space-time adaptive approach comes with additional computational cost. When the space-time mesh is a global product of spatial and temporal meshes with equidistant time steps, the Galerkin matrix has a block Toeplitz structure with constant blocks $V^j$ corresponding to global time steps $t_n$, $t_m$ with $j=n-m$. }

\textcolor{black}{The Toeplitz structure is lost when the time step or spatial mesh change with time. Instead of one new matrix $V^j$ in time step $j$, for variable time step a naive implementation even for a constant spatial now requires the computation of $j+1$ matrices in time step $j$. While Gl\"{a}fke's brute force approach in 2d \cite{Glaefke} provides a proof of principle for the adaptive procedure,  it becomes computationally infeasible in 3d. Efficient implementations of space-time adaptivity for time domain boundary elements would likely reuse those entries of the space-time system which have not been affected by the last refinement step. The actual implementation remains a challenge for future work in both 2d and 3d.}

\section{Numerical experiments}\label{experiments} 

${}$\\
\noindent \textbf{Example 1:} We consider the Dirichlet problem $\mathcal{V} \phi = f$ on {the unit sphere} $\Gamma = S^2$ with the right hand side $f(t,x,y,z) = \sin(t)^5x^2$ and $[0, T] = [0, 2.5]$. We use a discretization by linear ansatz and test functions in space and time. $\Gamma$ is approximated by uniform meshes of 80, 320,1280, and 5120 triangles, and the time step $\Delta t$ is $0.4$, $0.2$, $0.1$, resp.~$0.05$ for the respective meshes to keep $\frac{\Delta t}{h}$ fixed. The numerical results are compared to the exact solution. 

Figure \ref{sphereerror} shows the convergence of the error $\phi - \phi_{h,\Delta t}$ in the energy norm as well as the $L^2$ error in the sound pressure and compares them to both residual and ZZ error indicators. The resulting convergence rates are similar: We obtain a convergence rate of 0.93 in energy norm, 0.97 in sound pressure, 0.9 in the residual error indicator, and 1.02 in the ZZ indicator.
This illustrates the reliability and efficiency of both error indicators with respect to the energy norm and related quantities such as the sound pressure in an example with known exact solution.  More precisely, the quotient of the error estimate and the energy error, the efficiency index,  remains approximately constant at $0.025$ as the number of degrees of freedom increases.\\

\begin{figure}{
\centering
 \makebox[\linewidth]{
 \includegraphics[width=9cm]{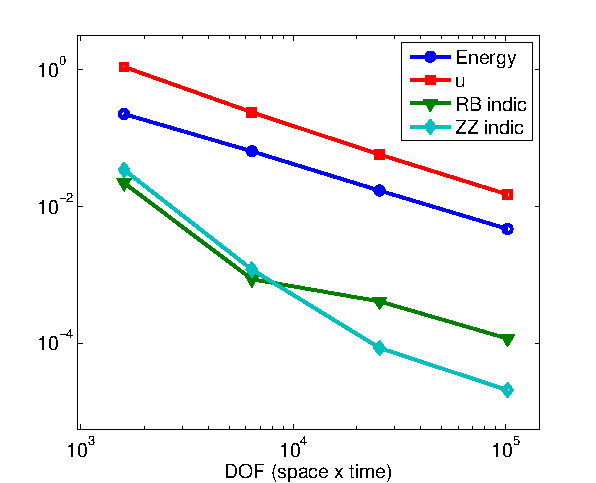}
 }
\caption{Energy error, residual and ZZ error indicators for Dirichlet problem on $\Gamma = S^2$, Example 1. }
\label{sphereerror} }
\end{figure}

\noindent \textbf{Example 2:} We consider the Dirichlet problem $\mathcal{V} \phi = f$ on the square screen $\Gamma = [-0.5, 0.5]^2\times \{0\}$ with the right hand side $f(t,x,y,z) = \sin(t)^5x^2$ for times $[0,2.5]$. Using a discretization by linear ansatz and test functions in space and time, we compare the error of a uniform discretization to the error of an adaptive series of meshes, steered by the residual error estimate.  The time step is fixed at $\Delta t = 0.1$, and the uniform meshes consist of $18$, $288$, $648$, $1352$, and $6050$ triangles, while the adaptive refinements correspond to $36$, $74$, $164$, $370$, $784$, $1676$, $3485$, and $7432$ triangles. \\

Figure \ref{convwop} shows the convergence of the error indicator and the error in the energy norm, for both the uniform and adaptive series of meshes. The convergence rate is approximately $0.48$ for uniform refinements, compared to $0.77$ for adaptive refinements. The convergence rate in the uniform case agrees with the theoretical prediction of $0.5$ from \cite{graded}, and the adaptive convergence rate of $0.77$ recovers the results for time-independent screen problems \cite{cms}. \\

As in the elliptic case, the convergence rate of the adaptive refinements does not reach the optimal rate of $1.5$ achieved with algebraically graded meshes, as demonstrated in \cite{graded}. The optimal anisotropic graded meshes cannot be obtained by mesh refinements: While adaptive meshes are locally quasi-uniform, graded meshes involve arbitrarily thin triangles with shallow angles near the edges of the screen. A heuristic explanation for  the  substantially higher rates of (anisotropic) graded meshes is contained in \cite{cmsp}.
\\

Figure \ref{screenmeshes} shows representative adaptive meshes, where the color scale highlights the residual-based indicator values for each element. Mesh refinements concentrate at the left and right edges, where the right hand side is steep, and to a lesser extent also at the top and bottom edges.\\

\begin{figure}
  \centering
   \includegraphics[width=9cm]{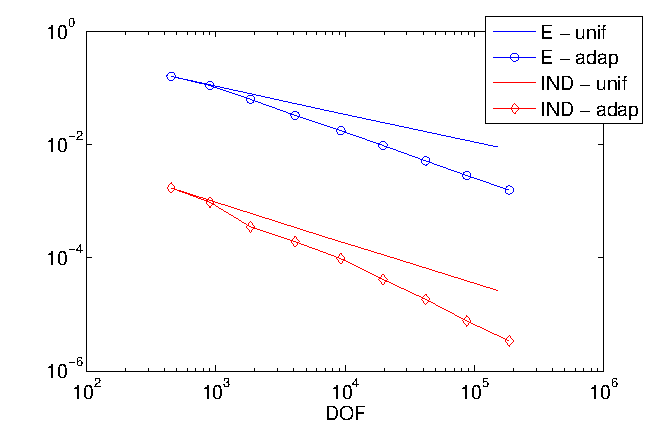}
   \caption{Energy error and residual error indicators for Dirichlet problem on $\Gamma = [-0.5, 0.5]^2\times \{0\}$, Example 2.}\label{convwop}
\end{figure}

\begin{figure}
  \centering
   \includegraphics[width=5cm]{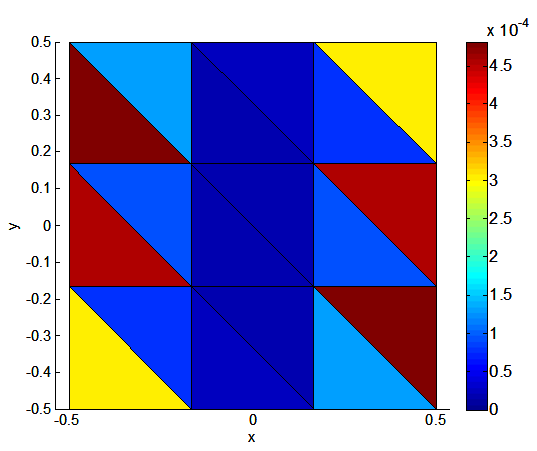}
   \includegraphics[width=5cm]{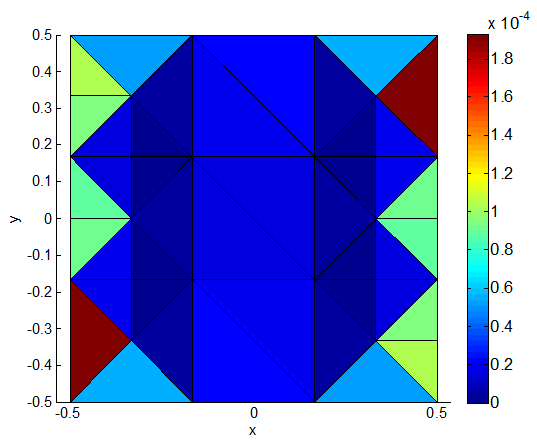}
   \includegraphics[width=5cm]{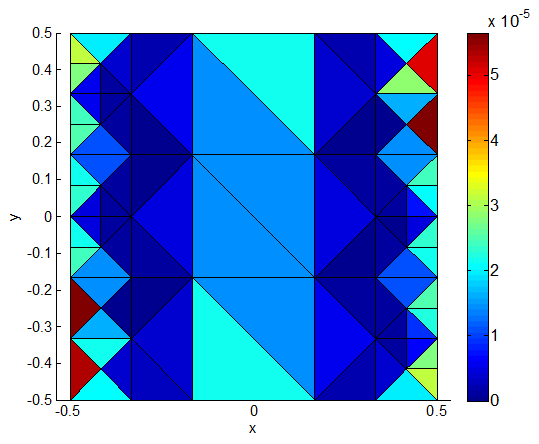}
   \includegraphics[width=5cm]{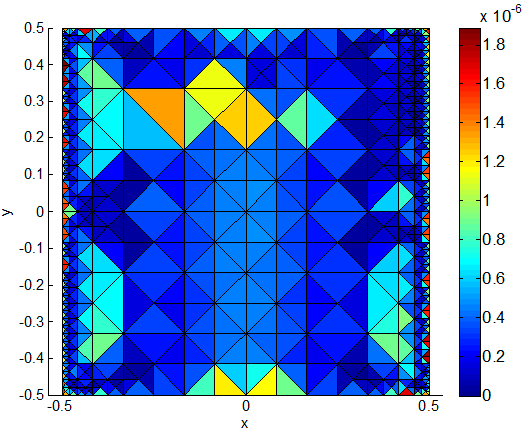}
   \caption{Meshes 1, 2, 3 and 6 generated by adaptive refinements, Example 2.}\label{screenmeshes}
\end{figure}

\noindent \textbf{Example 3:} We consider the Dirichlet problem $\mathcal{V} \phi = f$ on {the triangle $\Gamma$ with angles of $45$, $45$ and $90$ degrees, as  depicted in Figure \ref{isotrianglemesh}}. The right hand side is given by $f(t,x,y,z) = \sin(t)^5$, and we consider times $[0,2.5]$. Using the discretization from Example 2, we compare the error on uniform meshes to the error of an adaptive series of meshes, steered by the residual error estimate.  The time step is fixed at $\Delta t = 0.1$.\\ 

Figure \ref{isotriangle} shows the convergence of the error indicator and the error in the energy norm, for both the uniform and adaptive series of meshes. The convergence rate is approximately $0.49$ for uniform refinements, compared to $0.78$ for adaptive refinements, almost identical to the square screen in Example 2. \\

Figure \ref{isotrianglemesh} shows representative adaptive meshes, where the color scale highlights the residual-based indicator values for each element. As expected, mesh refinements concentrate in the two sharper corners of the triangle.\\

\begin{figure}
  \centering
   \includegraphics[width=9cm]{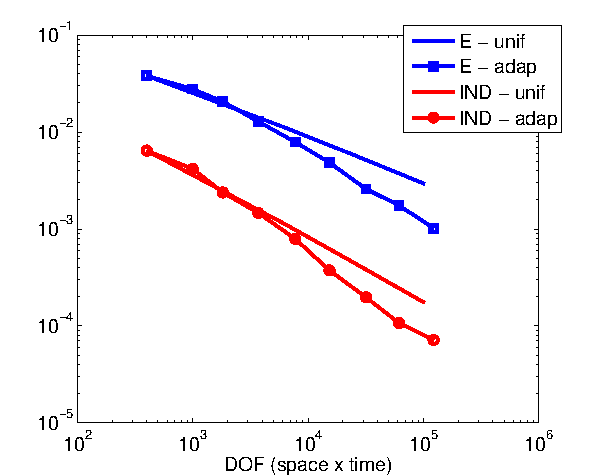}
   \caption{Energy error and residual error indicators for Dirichlet problem isosceles triangle, Example 3.}\label{isotriangle}
\end{figure}

\begin{figure}
  \centering
   \includegraphics[width=5cm]{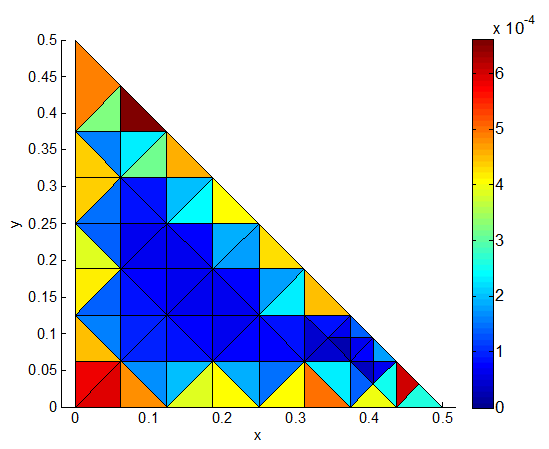}
   \includegraphics[width=5cm]{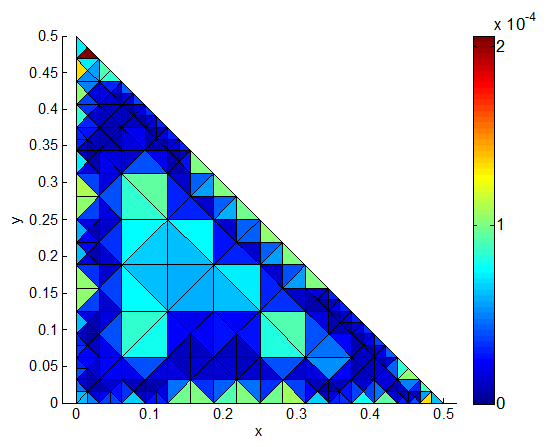}
   \includegraphics[width=5cm]{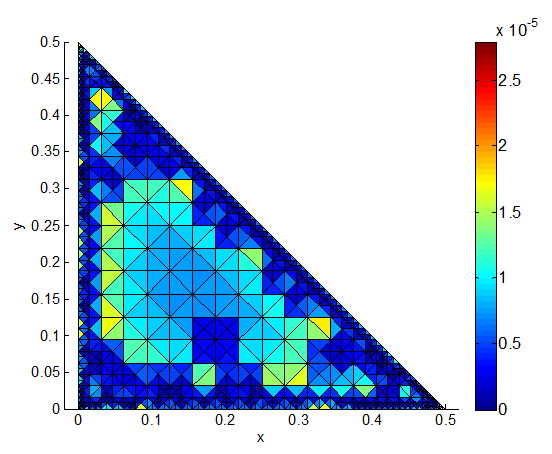}
   \includegraphics[width=5cm]{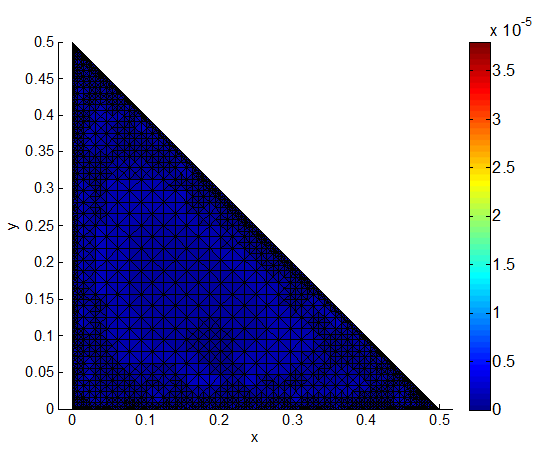}
   \caption{Meshes 3, 5, 7 and 8  generated by adaptive refinements, Example 3.}\label{isotrianglemesh}
\end{figure}

\noindent \textbf{Example 4:}  We consider the Dirichlet problem $\mathcal{V} \phi = f$ on the triangle $\Gamma$ with angles of $30$, $60$ and $90$ degrees, as depicted in Figure \ref{369mesh}. The right hand side is given by $f(t,x,y,z) = \sin(t)^5$, and we consider times $[0,2.5]$. Using the discretization from Example 2, we compare the error on uniform meshes to the error of an adaptive series of meshes, steered by the residual error estimate. The time step is fixed at $\Delta t = 0.1$.\\ 

Figure \ref{369triangle} shows the convergence of the error indicator and the error in the energy norm, for both the uniform and adaptive series of meshes. The convergence rate is approximately $0.448$ for uniform refinements, compared to $0.65$ for adaptive refinements. The rates are slightly reduced compared to Examples 2 and 3, possibly because the asymptotic regime only sets in for higher degrees of freedom because of the small angles of $30$ degrees in the triangulation.\\

Figure \ref{369mesh} shows representative adaptive meshes, where the color scale highlights the residual-based indicator values for each element. As expected, mesh refinements concentrate in the corners according to their sharpness.\\

\begin{figure}
  \centering
   \includegraphics[width=9cm]{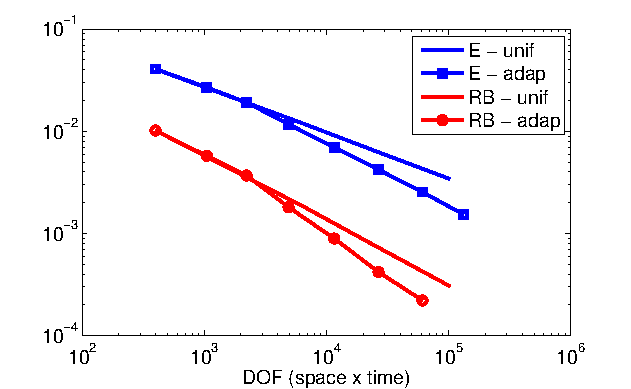}
   \caption{Energy error and residual error indicators for Dirichlet problem on 30-60-90 triangle, Example 4.}\label{369triangle}
\end{figure}

\begin{figure}
  \centering
   \includegraphics[width=5cm]{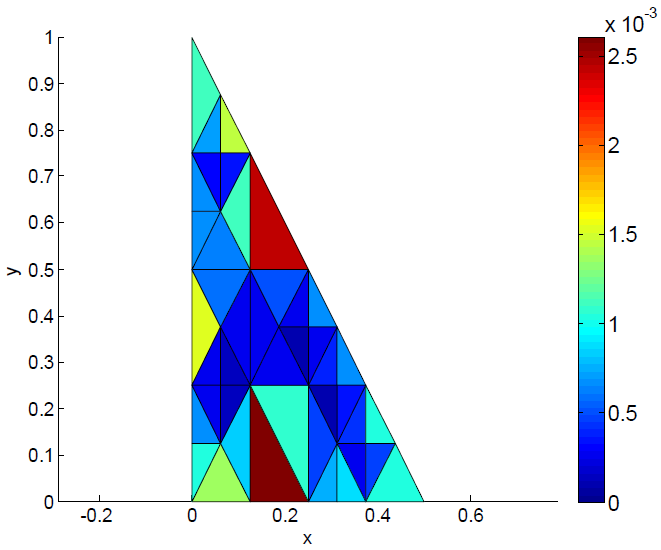}
   \includegraphics[width=5cm]{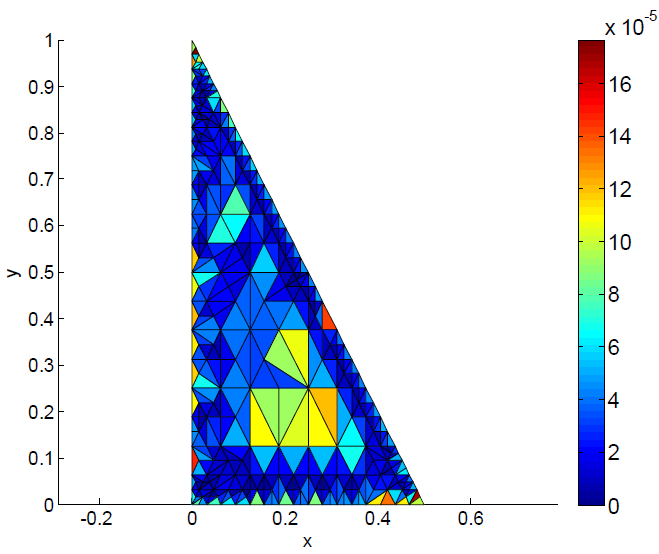}
   \includegraphics[width=5cm]{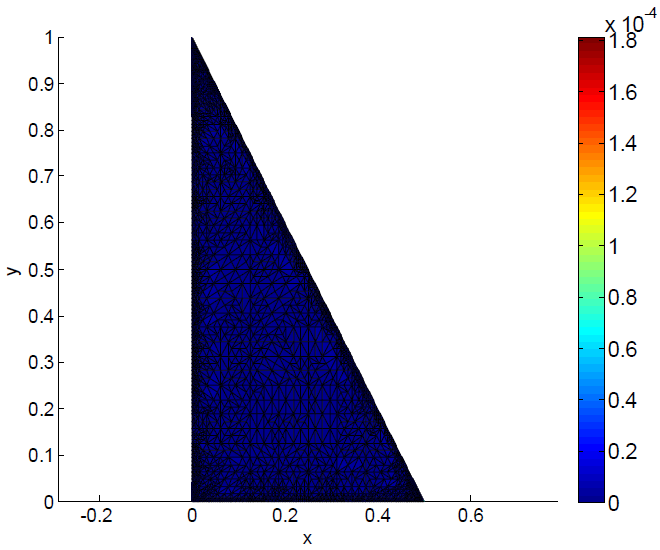}
   \caption{Meshes 2, 5, 8  generated by adaptive refinements, Example 4.}\label{369mesh}
\end{figure}

From Experiments 2, 3 and 4 we conclude that the convergence rate is does not depend on the angles of the triangle, and therefore the corner singularity. The convergence rate of around $\frac{1}{2}$ on uniform meshes  matches the rate theoretically expected for the approximation of the edge singularity \cite{graded}, while the approximation  error from the corner singularities is of higher order. The adaptive convergence rates of around $0.78$ are compatible with the convergence rates of around $0.8$ for the time-independent Laplace equation in \cite{cms}. The rates are slightly reduced in Example 4, with angles of $30$ degrees, possibly because of the necessarily thin triangles in the triangulation.

\section{Appendix: Mapping properties}

We consider the mapping properties of Theorem \ref{mapthm2}, for $\Gamma$ Lipschitz. The key step involves estimates for a fundamental solution to the wave equation, from which the mapping properties of the layer potentials can be deduced similarly as in Costabel's work for the Laplace equation \cite{cos}. \\

Recall the following result by Becache and Ha Duong \cite{b1} for the Dirichlet trace:
\begin{lemma}\label{tracelemma}
For $s\in(\frac12,\frac32),\gamma_0\colon H^{s}_{\omega,loc}(\mathbb{R}^d)\to H^{s-\frac12}_\omega(\Gamma)$ continuous and $\Vert \gamma_0\, u\Vert_{s-\frac12 ,\omega, \Gamma}\lesssim_{\sigma} \Vert u\Vert_{s,\omega, \mathbb{R}^d}$.
\end{lemma}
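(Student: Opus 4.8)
The plan is to reduce to a half-space by a localisation and flattening procedure as in Costabel \cite{cos}, and then to establish the estimate directly by a Fourier computation in which the $\omega$-uniformity is transparent. The classical trace theorem for Lipschitz domains holds precisely in the range $s\in(\frac12,\frac32)$, and the only additional point here is that the constant must be controlled uniformly in $\mathrm{Re}\,\omega$ along the line $\mathrm{Im}\,\omega=\sigma$; I would track this dependence through every step.

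First I would treat the model case $\Omega=\mathbb{R}^d_+=\{x_d>0\}$, $\Gamma=\{x_d=0\}\cong\mathbb{R}^{d-1}$, where $\gamma_0 u(x')=u(x',0)$. Writing $\widehat{u}$ for the full Fourier transform and $\xi=(\xi',\xi_d)$, the tangential transform of the trace is $\mathcal{F}_{x'}[\gamma_0u](\xi')=c\int_{\mathbb{R}}\widehat{u}(\xi',\xi_d)\,d\xi_d$. A Cauchy--Schwarz inequality against the weight $(|\omega|^2+|\xi|^2)^{s}$ gives
\begin{equation*}
\big|\mathcal{F}_{x'}[\gamma_0u](\xi')\big|^2\leq \Big(\int_{\mathbb{R}}(|\omega|^2+|\xi'|^2+\xi_d^2)^{-s}\,d\xi_d\Big)\int_{\mathbb{R}}(|\omega|^2+|\xi|^2)^{s}|\widehat{u}(\xi',\xi_d)|^2\,d\xi_d\ .
\end{equation*}
The substitution $\xi_d=(|\omega|^2+|\xi'|^2)^{1/2}\eta$ evaluates the first factor as $C_s\,(|\omega|^2+|\xi'|^2)^{\frac12-s}$ with $C_s=\int_{\mathbb{R}}(1+\eta^2)^{-s}\,d\eta$, which is finite exactly because $s>\frac12$ and, crucially, is independent of $\omega$. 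Multiplying by $(|\omega|^2+|\xi'|^2)^{s-\frac12}$ and integrating in $\xi'\in\mathbb{R}^{d-1}$ yields $\|\gamma_0u\|_{s-\frac12,\omega,\mathbb{R}^{d-1}}^2\lesssim C_s\|u\|_{s,\omega,\mathbb{R}^d}^2$, with a constant that does not depend on $\omega$. This scaling identity is the heart of the matter: the $\omega$-weighted norms are built so that the half-space estimate is automatically uniform.

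To pass from the half-space to a general Lipschitz $\Gamma$, I would localise with a finite smooth partition of unity subordinate to cylinders $U_i$ in which $\Gamma$ is the graph of a Lipschitz function $\phi_i$, and flatten each piece by the shear $\Psi_i(x',x_d)=(x',x_d-\phi_i(x'))$. Multiplication by the smooth cut-offs is bounded on $H^s_\omega(\mathbb{R}^d)$ uniformly in $\omega$: on the Fourier side it is convolution with a fixed Schwartz kernel, and the Peetre inequality bounds $(|\omega|^2+|\xi|^2)^{s}/(|\omega|^2+|\eta|^2)^{s}\lesssim(1+|\xi-\eta|)^{2|s|}$ uniformly in $\omega$, so the weight never sees $\mathrm{Re}\,\omega$. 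The boundary norm $\|\cdot\|_{s-\frac12,\omega,\Gamma}$ is itself defined through such charts, so its localisation is built in.

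The main obstacle is the flattening step for $s\in(1,\frac32)$: the pullback by the bi-Lipschitz shear $\Psi_i$ must remain bounded on $H^s_\omega$, yet its Jacobian involves $\nabla\phi_i\in L^\infty$, which acts as a multiplier on $H^{s-1}$ only because $|s-1|<\frac12$ — this is exactly where the upper bound $s<\frac32$ enters, following Costabel's argument for the unweighted spaces. I would then verify that this boundedness is uniform in $\mathrm{Re}\,\omega$ by observing that $|\omega|^2$ enters the weight $(|\omega|^2+|\xi|^2)^{s}$ only as a sign-definite mass term which appears identically on both sides of the change-of-variables estimate and is not touched by $\Psi_i$ (which acts only in the spatial variables); freezing this mass, the classical Lipschitz-change-of-variables bound transfers to the weighted spaces with an $\omega$-independent constant for $|\omega|\geq\sigma$. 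Assembling the finitely many local estimates over the cover then yields the claimed bound $\|\gamma_0u\|_{s-\frac12,\omega,\Gamma}\lesssim_\sigma\|u\|_{s,\omega,\mathbb{R}^d}$.
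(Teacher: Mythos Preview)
The paper does not give a proof of this lemma: it attributes the case $s\leq 1$ to Becache and Ha Duong \cite{b1} and states that the extension to $s\in(1,\frac32)$ ``relies on the approach of \cite{cos} for Lipschitz $\Gamma$''. Your proposal is precisely a fleshed-out version of that route --- the half-space Fourier computation with the $\omega$-weighted symbol (which is the Becache--Ha Duong part) together with Costabel's localisation and Lipschitz flattening for the extension beyond $s=1$ --- and it is correct. In particular, you have correctly isolated the two non-trivial points: that the constant $C_s=\int_{\mathbb{R}}(1+\eta^2)^{-s}\,d\eta$ from the scaling substitution is $\omega$-independent, and that the upper bound $s<\frac32$ is exactly what allows the bi-Lipschitz pullback to act boundedly because $\nabla\phi_i\in L^\infty$ is a multiplier on $H^{s-1}$ only for $|s-1|<\frac12$. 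Your argument for $\omega$-uniformity of the flattening step is somewhat informal (``freezing the mass''), but the underlying observation is sound: the spatial change of variables commutes with the $|\omega|^2$ shift in the weight, so the classical bound transfers with a constant depending only on the Lipschitz data.
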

Becache and Ha Duong only state this result for $s\leq 1$. The extension to $s>1$ relies on the approach of \cite{cos} for Lipschitz $\Gamma$. For $\Gamma$ of class $C^{1,\alpha}$, continuity also holds for $s=\frac{3}{2}$, by the extension in \cite{wendland}.\\ 

For $x\in \mathbb{R}^{d}\setminus \Gamma$, the single layer operator for the Helmholtz equation is given by \begin{align}
     K_{0}^{\omega}v(x)&=\int_{\Gamma} G_{\omega}(x,y) v(y)\; ds_{y} =  G_{\omega}\circ \gamma_{0}^{*}v(x)\ ,
    \end{align}
where $\gamma_{0}^{*}$ is the adjoint of the trace map $\gamma_{0}$.

As before, we always consider frequencies $\omega \in \mathbb{C}$ with $\mathrm{Im} \omega > \sigma$. Note that
\begin{align}
\frac{1}{i \bar{\omega}} G_{\omega} : H^{-s}_{\omega, comp}(\mathbb{R}^{d})\rightarrow  H^{-s+2}_{\omega,loc}(\mathbb{R}^{d})\ ,
\end{align}
because
\begin{align*}
\|\frac{1}{i \overline{\omega}} G_\omega v \|_{-s+2, \omega, \mathbb{R}^{d}} & = \| \mathcal{F}^{-1}\frac{1}{i\overline{\omega}}(|\xi|^2-\omega^2)^{-1} \mathcal{F}v\|_{-s+2,\omega,\mathbb{R}^{d}}\\
& = \| (|\xi|^2 + |\omega|^2)^{\frac{-s+2}{2}} \frac{1}{i\overline{\omega}}(|\xi|^2-\omega^2)^{-1} \mathcal{F}v\|_{L^2(\mathbb{R}^{d})}\\
& \leq  (\mathrm{Im} \omega)^{-1}\ \| (|\xi|^2 + |\omega|^2)^{\frac{-s+2}{2}} (|\xi|^2+|\omega|^2)^{-1} \mathcal{F}v\|_{L^2(\mathbb{R}^{d})}\\
& = (\mathrm{Im} \omega)^{-1}\ \| v \|_{-s,\omega,\mathbb{R}^{d}}\ .
\end{align*}
Here
\begin{align*}
\mathrm{Re} \ i\overline{\omega} (|\xi|^2-\omega^2) = (\mathrm{Im} \ \omega) (|\xi|^2 + |\omega|^2)
\end{align*}
Hence
\begin{align}
 \Arrowvert G_{\omega} f \Arrowvert_{-s+2, \omega,\mathbb{R}^{d}}\lesssim \frac{|\omega|}{\mathrm{Im} \omega}\Arrowvert f \Arrowvert_{-s, \omega,\mathbb{R}^{d}},
\end{align}
so that with $\mathrm{Im} \omega\geq \sigma$
\begin{align}\label{singpotref}
 \Arrowvert  K_{0}^{\omega}v \Arrowvert_{-s+2,\omega,\mathbb{R}^{d}}\lesssim_\sigma |\omega|
  \Arrowvert  v \Arrowvert_{-s+\frac{1}{2}, \omega,\Gamma}\ .
\end{align}

From Lemma \ref{tracelemma} we conclude that $\mathcal{V}^{\omega}:=\gamma_{0} K_{0}^{\omega}:H^{-\frac{1}{2}+\tau}_\omega(\Gamma)\rightarrow H^{\frac{1}{2}+\tau}_\omega(\Gamma)$ continuously for $\tau\in (-\frac{1}{2},\frac{1}{2})$, and
\begin{align}
 \Arrowvert \mathcal{V}^{\omega}v\Arrowvert_{\frac{1}{2}+\tau, \omega, \Gamma}\lesssim_\sigma |\omega|  \Arrowvert v\Arrowvert_{-\frac{1}{2}+\tau, \omega,\Gamma}\ .
\end{align}

For  $x\in \mathbb{R}^{d}\setminus \Gamma$, the double layer potential for the Helmholtz equation is given by \begin{align}  K_{1}^{\omega}v(x)&=\int_{\Gamma}\partial_{\nu(y)} G_{\omega}(x,y) v(y)\; ds_y.
    \end{align}
To describe the mapping properties of this operator, we rely on the following lemma:
\begin{lemma}The Dirichlet problem
 \begin{subequations} \label{prob1}
\begin{alignat}{2}
Pu:=\Delta u+\omega^{2} u&=0 \ , \\
u|_{\Gamma}&=v,
\end{alignat}
\end{subequations}
for given $v\in H^{\frac{1}{2}}_\omega(\Gamma)$, admits a unique weak solution $u = Tv$, and
\begin{align}
\Arrowvert u \Arrowvert_{1,\omega,\Omega}\lesssim C(\sigma) |\omega|\Arrowvert v\Arrowvert_{\frac{1}{2},\omega,\Gamma}\ 
\end{align}
and
\begin{align}
  \Arrowvert T v \Arrowvert_{H^{1}_{P}(\Omega)}&:=\left( \Arrowvert u \Arrowvert_{1,\omega,\Omega}^{2}+\Arrowvert P u \Arrowvert^{2}_{0,\omega, \Omega}\right)^{\frac{1}{2}}
  \leq C'(\sigma) |\omega|\Arrowvert v\Arrowvert_{\frac{1}{2},\omega,\Gamma} .\label{H1P}
\end{align}
\end{lemma}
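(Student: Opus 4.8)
The plan is to reduce the inhomogeneous Dirichlet problem \eqref{prob1} to one with homogeneous boundary data and then solve the resulting variational problem by the Lax--Milgram lemma, using exactly the coercivity mechanism already exploited above for $\frac{1}{i\bar\omega}G_\omega$. Throughout I keep $\omega\in\mathbb{C}$ with $\mathrm{Im}\,\omega\geq\sigma>0$ and track the dependence of every constant on $|\omega|$, since the single power $|\omega|$ in \eqref{H1P} is the point of the estimate.

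First I would lift the boundary datum. By Lemma \ref{tracelemma} with $s=1$ the trace $\gamma_0\colon H^1_\omega(\Omega)\to H^{\frac12}_\omega(\Gamma)$ is bounded $\lesssim_\sigma$, and on a Lipschitz domain the standard lifting theorem for the $\omega$-weighted spaces provides a right inverse with a constant depending only on $\sigma$; thus I may choose $\widetilde v\in H^1_\omega(\Omega)$ with $\gamma_0\widetilde v=v$ and $\|\widetilde v\|_{1,\omega,\Omega}\lesssim_\sigma\|v\|_{\frac12,\omega,\Gamma}$. Setting $u=w+\widetilde v$, the problem becomes: find $w\in H^1_{0,\omega}(\Omega)$ with $\Delta w+\omega^2 w=-(\Delta\widetilde v+\omega^2\widetilde v)=:F$ and $w|_\Gamma=0$. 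A symbol computation in the $\omega$-weighted norms, entirely analogous to the estimates for $G_\omega$ above, gives $\|\Delta\widetilde v\|_{-1,\omega,\Omega}\leq\|\widetilde v\|_{1,\omega,\Omega}$ and $\|\omega^2\widetilde v\|_{-1,\omega,\Omega}\leq\|\widetilde v\|_{1,\omega,\Omega}$, so that $\|F\|_{-1,\omega,\Omega}\lesssim_\sigma\|v\|_{\frac12,\omega,\Gamma}$ with a constant independent of $\omega$.

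The heart of the argument is the coercivity of the sesquilinear form $B(w,\psi)=\int_\Omega\nabla w\cdot\nabla\bar\psi\,dx-\omega^2\int_\Omega w\bar\psi\,dx$ on $H^1_{0,\omega}(\Omega)$, which the equation reads as $B(w,\psi)=-\langle F,\psi\rangle$. Testing against $i\bar\omega\,w$ and using precisely the real-part identity already recorded above, $\mathrm{Re}\big(i\bar\omega(|\xi|^2-\omega^2)\big)=(\mathrm{Im}\,\omega)(|\xi|^2+|\omega|^2)$, yields $\mathrm{Re}\big(i\bar\omega\,B(w,w)\big)=(\mathrm{Im}\,\omega)\big(\|\nabla w\|^2_{L^2(\Omega)}+|\omega|^2\|w\|^2_{L^2(\Omega)}\big)=(\mathrm{Im}\,\omega)\,\|w\|^2_{1,\omega,\Omega}$, while $|B(w,\psi)|\lesssim\|w\|_{1,\omega,\Omega}\|\psi\|_{1,\omega,\Omega}$ with a constant independent of $\omega$. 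Applying Lax--Milgram to the continuous, coercive form $i\bar\omega\,B(\cdot,\cdot)$ against the functional $\psi\mapsto-i\bar\omega\langle F,\psi\rangle$, of norm $\lesssim|\omega|\,\|F\|_{-1,\omega,\Omega}$, produces a unique $w$ with $\|w\|_{1,\omega,\Omega}\leq(\mathrm{Im}\,\omega)^{-1}|\omega|\,\|F\|_{-1,\omega,\Omega}\lesssim_\sigma|\omega|\,\|v\|_{\frac12,\omega,\Gamma}$. Hence $u=w+\widetilde v=Tv$ exists and is unique, and $\|u\|_{1,\omega,\Omega}\lesssim_\sigma|\omega|\,\|v\|_{\frac12,\omega,\Gamma}$, since $|\omega|\geq\mathrm{Im}\,\omega\geq\sigma$ absorbs the lower-order extension term. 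Finally, $u$ solves $Pu=0$ weakly, so $\|Pu\|_{0,\omega,\Omega}=0$ and the graph-norm bound \eqref{H1P} coincides with the $H^1_\omega$ estimate just established.

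The main obstacle is less the Lax--Milgram step itself than the careful bookkeeping of the $\omega$-dependence: one must confirm that the trace/lifting pair and the continuity constant of $B$ are controlled by $\lesssim_\sigma$ (uniformly over $\mathrm{Im}\,\omega\geq\sigma$), so that the \emph{single} factor $|\omega|$ in the final bound arises solely from the $i\bar\omega$ multiplier converting the $(\mathrm{Im}\,\omega)$-coercivity into a bound on the data functional. For an unbounded exterior $\Omega$ one should additionally note that the weight $\mathrm{Im}\,\omega\geq\sigma>0$ makes the form coercive on the full $\omega$-weighted $H^1$ space, so no radiation condition is needed, and that the lifting theorem remains available for the exterior Lipschitz domain.
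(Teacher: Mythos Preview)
Your proof is correct and follows essentially the same route as the paper: lift the boundary datum by a bounded extension $\widetilde v$ with $\|\widetilde v\|_{1,\omega,\Omega}\lesssim_\sigma\|v\|_{\frac12,\omega,\Gamma}$, exploit the coercivity obtained by multiplying the sesquilinear form with $i\bar\omega$ to get the operator bound $\|A^{-1}\|\leq|\omega|/\mathrm{Im}\,\omega$, and then solve $Aw=-A\widetilde v$ by Lax--Milgram. Your version is slightly more explicit in bounding $\|F\|_{-1,\omega,\Omega}$ and in observing that $Pu=0$ makes the $H^1_P$ bound coincide with the $H^1_\omega$ estimate, but the argument is the same.
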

\begin{proof}
 The bilinear form
 \begin{align}
  a(u,u)=-(\nabla u, \nabla u)_{L^{2}(\Omega)}+\omega^{2}(u,u)_{L^{2}(\Omega)}
 \end{align}
satisfies
\begin{align}
 \text{Re}\left\{ \frac{(-i \bar{\omega})}{|\bar{\omega}|}a(u,u)\right\}=-\frac{\text{Im}\omega}{|\bar{\omega}|}\Arrowvert u \Arrowvert_{1,\omega,\Omega}^{2}\ .
\end{align}
Hence, the associated operator $-i\bar{\omega} A$ satisfies
\begin{align}\label{eq1}
 \Arrowvert A^{-1}\Arrowvert_{\mathcal{L}(H^{-1}_\omega(\Omega),H^{1}_\omega(\Omega))}\leq \frac{|\omega|}{\text{Im}\omega}\ .
 \end{align}
We use the extension operator to extend $v$ to $\tilde{v}\in H^{1}_\omega(\Omega)$ with norm
\begin{align}
 \Arrowvert \tilde{v}\Arrowvert_{1,\omega,\Omega}\lesssim_\sigma \Arrowvert v\Arrowvert_{\frac{1}{2},\omega,\Gamma}.
\end{align}
Then we seek a solution in $H_{\omega, 0}^{1}(\Omega)$ to $A u=-A \tilde{v} \in H^{-1}_\omega(\Omega)$. By \eqref{eq1}, u exists and
\begin{align}
\Arrowvert u \Arrowvert_{1,\omega,\Omega}\lesssim_\sigma \frac{|\omega|}{\text{Im}\omega}\Arrowvert v\Arrowvert_{\frac{1}{2},\omega,\Gamma}.
\end{align}
\end{proof}

To relate the double layer potential $K_{1}^{\omega}$ to the solution operator $T$ of \eqref{prob1}, we use the representation formula for $x\in  \mathbb{R}^{d}\setminus \Gamma$:
\begin{align}
 u(x)=G_{\omega} f(x)+\langle \gamma_{1} G_{\omega}(x,\cdot),[\gamma_{0} u]\rangle-\langle[\gamma_{1} u],G_{\omega}(x,\cdot)\rangle \ .
\end{align}
Here $\gamma_1$ denotes the Neumann trace. This shows
\begin{align}
 T v=- K_{1}^{\omega}v+ K_{0}^{\omega}\gamma_{1} T v,
\end{align}
or
\begin{align}\label{K1formula}
 K_{1}^{\omega}=(-1+ K_{0}^{\omega}\gamma_{1}) T\ .
\end{align}
With the operator norms from \eqref{singpotref} and \eqref{H1P}, we conclude 
\begin{align}
& \Arrowvert K_{1}^\omega v\Arrowvert_{1,\omega,\Omega}\nonumber \\&\lesssim \left( 1+\Arrowvert K_{0}^{\omega}\Arrowvert_{\mathcal{L}(H^{-\frac{1}{2}}_\omega(\Gamma), H^{1}_\omega(\Omega))}
 \Arrowvert \gamma_{1}\Arrowvert_{\mathcal{L}(H^{1}_{P}(\Omega), H^{-\frac{1}{2}}_\omega(\Gamma))}\right)\Arrowvert T\Arrowvert_{\mathcal{L}(H^{\frac{1}{2}}_\omega(\Gamma), H^{1}_{P}(\Omega))}
 \Arrowvert v\Arrowvert_{\frac{1}{2},\omega,\Gamma}\nonumber\\
 &\lesssim_\sigma \left(1+|\omega|
 \Arrowvert \gamma_{1}\Arrowvert_{\mathcal{L}(H^{1}_{P}(\Omega), H^{-\frac{1}{2}}_\omega(\Gamma))}\right)
 C'(\sigma)|\omega|\Arrowvert v\Arrowvert_{\frac{1}{2},\omega,\Gamma}\ .
\end{align}
It remains to determine $ \Arrowvert \gamma_{1}\Arrowvert_{\mathcal{L}(H^{1}_{P}(\Omega), H^{-\frac{1}{2}}_\omega(\Gamma))}$, which we now pursue.

The trace map $\gamma_{0}$ admits a right-inverse $\gamma_{0}^{-}$, which maps $H^{s-1/2}_\omega(\Gamma) \to H^{s}_{\omega, loc}(\mathbb{R}^d)$ continuously for all $s \in (1/2, 1]$.\\

With
\begin{align}
 \phi\mapsto\langle \gamma_{1} u, \phi \rangle:= -a(u,\gamma_{0}^{-}\phi)-\int_{\Omega} P u\; \gamma_{0}^{-}\bar{\phi}\ ,
\end{align}
we have
\begin{align}
& \sup_{\phi\neq 0}\frac{|\langle \gamma_{1} u, \phi \rangle|}{\Arrowvert \phi\Arrowvert_{\frac{1}{2},\omega,\Gamma}}\nonumber\\&=\sup_{\phi\neq 0}\frac{|-a(u,\gamma_{0}^{-}\phi)-\int_{\Omega} P u\; \gamma_{0}^{-}\bar{\phi}|}{\Arrowvert \phi\Arrowvert_{\frac{1}{2},\omega,\Gamma}}\nonumber\\
 &\leq \sup_{\phi\neq 0} \frac{|-\int_{\Omega}\partial_{\nu} u \overline{\partial_{j}(\gamma_{0}^{-}\phi})+\omega^{2} u \gamma_{0}^{-}\bar{\phi}-\int_{\Omega} P u\; \gamma_{0}^{-}\bar{\phi}|}{\Arrowvert \phi\Arrowvert_{H^{\frac{1}{2},\omega}(\Gamma)}}\nonumber\\
 &\leq \sup_{\phi\neq 0} \frac{1}{\Arrowvert \phi\Arrowvert_{\frac{1}{2},\omega,\Gamma}}\left(\Arrowvert \nabla u \Arrowvert_{L^{2}(\Omega)}\Arrowvert  \nabla \gamma_{0}^{-}\phi \Arrowvert_{L^{2}(\Omega)}
 +|\omega|^{2}\Arrowvert  u \Arrowvert_{L^{2}(\Omega)}\Arrowvert \gamma_{0}^{-}\phi \Arrowvert_{L^{2}(\Omega)}+\Arrowvert P u\Arrowvert_{L^{2}(\Omega)}\Arrowvert \gamma_{0}^{-}\phi \Arrowvert_{L^{2}(\Omega)}\right)\nonumber\\
  &\lesssim_{\sigma}\sup_{\phi\neq 0} \frac{1}{\Arrowvert  \gamma_{0}^{-}\phi\Arrowvert_{1,\omega,\Omega}}\left[\left(\Arrowvert \nabla u \Arrowvert^{2}_{L^{2}(\Omega)}
 +|\omega|^{2}\Arrowvert  u \Arrowvert^{2}_{L^{2}(\Omega)}\right)^{\frac{1}{2}}\left(\Arrowvert  \nabla \gamma_{0}^{-}\phi \Arrowvert^{2}_{L^{2}(\Omega)}+|\omega|^{2}\Arrowvert \gamma_{0}^{-}\phi \Arrowvert^{2}_{L^{2}(\Omega)}\right)^{\frac{1}{2}}\right.\nonumber\\
 &\left.\qquad +\Arrowvert P u\Arrowvert_{L^{2}(\Omega)}\Arrowvert \gamma_{0}^{-}\phi \Arrowvert_{L^{2}(\Omega)}\right]\nonumber\\
 &\lesssim_\sigma \left(\Arrowvert \nabla u \Arrowvert^{2}_{L^{2}(\Omega)}
 +|\omega|^{2}\Arrowvert  u \Arrowvert^{2}_{L^{2}(\Omega)}\right)^{\frac{1}{2}}+\Arrowvert P u\Arrowvert_{L^{2}(\Omega)}
\end{align}
Therefore we have for the conormal derivative $\gamma_1$:
\begin{lemma}
 Let $u\in H_{P}^{1}(\Omega)$. Then $\varphi\mapsto \langle\gamma_{1} u, \varphi \rangle$ is a continuous linear functional on $H^{\frac{1}{2}}_\omega(\Gamma)$ and 
 \begin{align}
  \Arrowvert\gamma_{1} u \Arrowvert_{-\frac{1}{2},\omega,\Gamma}\lesssim_{\sigma} \Arrowvert u\Arrowvert_{H^{1}_{P}(\Omega)}.
 \end{align}
\end{lemma}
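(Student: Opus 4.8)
The plan is to define the conormal derivative $\gamma_1 u$ for $u \in H^1_P(\Omega)$ — that is, $u \in H^1_\omega(\Omega)$ with $Pu = \Delta u + \omega^2 u \in L^2(\Omega)$ — through a weak Green's identity, and then estimate the resulting functional directly. Classically, for smooth $u$ integration by parts gives $\langle \gamma_1 u, \gamma_0 v\rangle = -a(u,v) - \int_\Omega (Pu)\,\overline{v}$ for all $v \in H^1_\omega(\Omega)$, where $a(u,v) = -(\nabla u,\nabla v)_{L^2(\Omega)} + \omega^2(u,v)_{L^2(\Omega)}$. Since the right-hand side depends on $v$ only through its boundary trace, I would invoke the continuous right-inverse $\gamma_0^-\colon H^{1/2}_\omega(\Gamma) \to H^1_{\omega,loc}(\mathbb{R}^d)$ of $\gamma_0$ recalled above and set $\langle \gamma_1 u, \varphi\rangle := -a(u,\gamma_0^- \varphi) - \int_\Omega (Pu)\, \overline{\gamma_0^- \varphi}$. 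The first step is to check that this is independent of the chosen right-inverse: if $w \in H^1_{\omega,0}(\Omega)$ has vanishing trace, then $-a(u,w) - \int_\Omega (Pu)\,\overline{w} = 0$ by the very definition of $Pu$ as a distribution tested against $w$, so the pairing depends on $\varphi$ alone and defines a genuine linear functional on $H^{1/2}_\omega(\Gamma)$.

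The second step is the quantitative bound, which is exactly the displayed chain of inequalities preceding the statement. Writing out $a(u,\gamma_0^- \varphi)$ and applying Cauchy--Schwarz term by term, the gradient and the $\omega^2$ contributions group into the single product $\|u\|_{1,\omega,\Omega}\,\|\gamma_0^-\varphi\|_{1,\omega,\Omega}$ once the $|\omega|^2$ weights are absorbed into the $\omega$-weighted $H^1$ norms, while the source term contributes $\|Pu\|_{L^2(\Omega)}\,\|\gamma_0^-\varphi\|_{L^2(\Omega)}$. The decisive ingredient is the $\sigma$-uniform continuity $\|\gamma_0^-\varphi\|_{1,\omega,\Omega} \lesssim_\sigma \|\varphi\|_{\frac12,\omega,\Gamma}$ in the weighted norms, valid for $\mathrm{Im}\,\omega \geq \sigma$, which lets me divide out $\|\varphi\|_{\frac12,\omega,\Gamma}$ and retain a constant that is uniform in $\omega$. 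Passing to the supremum over $\varphi \neq 0$ and using the duality characterization $\|\gamma_1 u\|_{-\frac12,\omega,\Gamma} = \sup_{\varphi \neq 0} |\langle \gamma_1 u,\varphi\rangle| / \|\varphi\|_{\frac12,\omega,\Gamma}$ then yields $\|\gamma_1 u\|_{-\frac12,\omega,\Gamma} \lesssim_\sigma (\|\nabla u\|_{L^2(\Omega)}^2 + |\omega|^2\|u\|_{L^2(\Omega)}^2)^{1/2} + \|Pu\|_{L^2(\Omega)} = \|u\|_{1,\omega,\Omega} + \|Pu\|_{0,\omega,\Omega} \lesssim \|u\|_{H^1_P(\Omega)}$, which is the claim.

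The main obstacle is not the Cauchy--Schwarz bookkeeping but the interplay between the two requirements of well-definedness and frequency-uniformity. On the one hand, the weak conormal derivative must be shown to be independent of the right-inverse and to reduce to the classical $\nu\cdot\nabla u$ for regular $u$, which is the standard $H^{-1/2}$ trace construction for the $H_P^1$-type space. On the other hand, every estimate must be carried out in the $\omega$-weighted norms rather than in standard Sobolev norms, so that the constants depend only on the lower bound $\sigma$ for $\mathrm{Im}\,\omega$; this is what makes the lemma usable after the inverse Laplace transform back to the time domain. The boundedness of $\gamma_0^-$ in the weighted norms with $\sigma$-dependence only is therefore the linchpin of the argument.
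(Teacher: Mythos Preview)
Your proposal is correct and follows essentially the same approach as the paper: the paper defines $\langle \gamma_1 u,\phi\rangle := -a(u,\gamma_0^-\phi) - \int_\Omega Pu\,\gamma_0^-\bar\phi$ and then carries out exactly the Cauchy--Schwarz estimate you describe, grouping the gradient and $|\omega|^2$ terms into $\|u\|_{1,\omega,\Omega}\|\gamma_0^-\phi\|_{1,\omega,\Omega}$ and invoking the $\sigma$-uniform boundedness of $\gamma_0^-$ to divide through by $\|\phi\|_{\frac12,\omega,\Gamma}$. Your explicit verification that the definition is independent of the chosen right-inverse is a useful addition that the paper leaves implicit.
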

We conclude for the double layer potential in the energy space:
\begin{equation}\label{K1estimate}
 \Arrowvert K_{1}^\omega v\Arrowvert_{1,\omega,\Omega} \lesssim_{\sigma}  |\omega|\left(1+|\omega|\right)\Arrowvert v\Arrowvert_{\frac{1}{2},\omega,\Gamma}.
\end{equation}
Variational arguments show that $\Arrowvert K_{1}^\omega v\Arrowvert_{1,\omega,\Omega}\lesssim   \frac{|\omega|}{\text{Im}\omega}\Arrowvert v\Arrowvert_{\frac{1}{2},\omega,\Gamma}$. However, the above argument generalizes \eqref{K1estimate} to arbitrary Sobolev exponents.\\

This generalization relies on the following theorem, which specifies the $\omega$-dependence of the endpoint estimates for the Dirichlet-Neumann  and Neumann-Dirichlet operators, denoted by $\gamma_1T$, respectively $ND$ \cite{necas}:
\begin{theorem}\label{necastheorem} For all $\tau \in [-1/2, 1/2]$:\\
a) $
 \lVert \gamma_1T \rVert_{\mathcal{L}(H^{\tau+1/2}_\omega(\Gamma),  H^{\tau-1/2}_\omega(\Gamma))} \lesssim_{\sigma} \lvert \omega \rvert$\ ,\\
b) $\lVert N D \rVert_{\mathcal{L}(H^{\tau-1/2}_\omega(\Gamma), H^{\tau+1/2}_\omega(\Gamma))} \lesssim_{\sigma} \lvert \omega \rvert$\ .
\end{theorem}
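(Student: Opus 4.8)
The plan is to obtain the two endpoints $\tau=\pm\frac12$ and then interpolate, since the interior value $\tau=0$ is already contained in the two preceding lemmas. Indeed, combining the solvability estimate \eqref{H1P} for the Dirichlet problem \eqref{prob1} with the conormal--derivative bound $\|\gamma_1 u\|_{-\frac12,\omega,\Gamma}\lesssim_\sigma\|u\|_{H^1_P(\Omega)}$ gives $\|\gamma_1 T v\|_{-\frac12,\omega,\Gamma}\lesssim_\sigma|\omega|\,\|v\|_{\frac12,\omega,\Gamma}$, which is a) for $\tau=0$; solving the Neumann problem by the same coercive bilinear form and taking the Dirichlet trace $\gamma_0$ gives b) for $\tau=0$. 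These single data points do not suffice for interpolation, so a genuinely new endpoint estimate is required.

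To reach $\tau=\frac12$ of a), i.e.\ $\gamma_1 T\colon H^{1}_\omega(\Gamma)\to L^2(\Gamma)$, I would use the Rellich--Ne\v{c}as identity for the Helmholtz operator, following the template of \cite{cos, necas}. Choosing a Lipschitz vector field $h$ transversal to $\Gamma$ with $h\cdot\nu\geq c>0$ (constructed via a partition of unity subordinate to the Lipschitz charts) and testing $2\,\mathrm{Re}\,(h\cdot\overline{\nabla u})(\Delta u+\omega^2 u)=0$ over $\Omega$, the divergence theorem and the splitting $\nabla u=(\partial_\nu u)\nu+\nabla_T u$ yield
$$
\int_\Gamma (h\cdot\nu)\,|\partial_\nu u|^2\,ds = \int_\Gamma (h\cdot\nu)\,|\nabla_T u|^2\,ds - 2\,\mathrm{Re}\int_\Gamma (\partial_\nu u)\,\overline{h_T\cdot\nabla_T u}\,ds - R_\omega(u)\ ,
$$
where $R_\omega(u)$ collects the interior integrals $\int_\Omega(\mathrm{div}\,h)|\nabla u|^2$, $\int_\Omega(\nabla h\,\nabla u)\cdot\overline{\nabla u}$ and $2\,\mathrm{Re}\int_\Omega(-\omega^2 u)(h\cdot\overline{\nabla u})$. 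Absorbing the boundary cross term by Young's inequality and using $h\cdot\nu\geq c$ gives $\|\partial_\nu u\|_{L^2(\Gamma)}^2\lesssim\|\nabla_T v\|_{L^2(\Gamma)}^2+|R_\omega(u)|$. Read in the opposite direction, the same identity controls $\|\nabla_T u\|_{L^2(\Gamma)}$ by $\|\partial_\nu u\|_{L^2(\Gamma)}$ plus interior terms, which is exactly the endpoint $\tau=\frac12$ of b) when $u$ solves the Neumann problem.

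The values $\tau=-\frac12$ then follow by duality. Green's first identity shows that the Dirichlet--Neumann map is symmetric for the bilinear $L^2$ pairing,
$$
\langle \gamma_1 T v_1, v_2\rangle = \int_\Omega \nabla u_1\cdot\nabla u_2 - \omega^2 u_1 u_2\,dx = \langle v_1, \gamma_1 T v_2\rangle\ ,
$$
with $u_i=Tv_i$, so that the transpose of $\gamma_1 T\colon H^{\tau+\frac12}_\omega\to H^{\tau-\frac12}_\omega$ is again $\gamma_1 T$ acting $H^{-\tau+\frac12}_\omega\to H^{-\tau-\frac12}_\omega$ with the same operator norm; hence the bound at $\tau=\frac12$ gives the bound at $\tau=-\frac12$, and analogously for $ND$. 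Interpolating in the $\omega$-weighted Sobolev scale between $\tau=-\frac12$ and $\tau=\frac12$ fills in the whole interval $[-\frac12,\frac12]$ with the same power $|\omega|$, and since $ND=(\gamma_1 T)^{-1}$ is well defined by unique solvability for $\mathrm{Im}\,\omega>\sigma$, the two parts are mutually consistent.

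The main obstacle is tracking the \emph{sharp} linear dependence $\lesssim_\sigma|\omega|$ through the interior term $R_\omega(u)$, not the Lipschitz construction of $h$, which is standard. A crude bound of the $\omega^2$-contribution produces a suboptimal power of $\omega$; the remedy is to integrate $\mathrm{Re}\int_\Omega(-\omega^2 u)(h\cdot\overline{\nabla u})$ by parts, expressing its dominant part through $\int_\Omega|u|^2$ and the boundary integral $\int_\Gamma(h\cdot\nu)|u|^2$, and then to control these via the quantitative Helmholtz estimate \eqref{H1P} together with the lower bound $\mathrm{Im}\,\omega\geq\sigma$. Balancing these powers of $\omega$ is the delicate step, and it is precisely here that the weight $\sigma>0$ is indispensable; the endpoint regularity itself is the Ne\v{c}as estimate \cite{necas}, extended to complex frequency as in \cite{cos}.
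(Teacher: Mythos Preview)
Your overall plan coincides with the paper's: establish the endpoint $\tau=\tfrac12$ via a Rellich--Ne\v{c}as identity, obtain $\tau=-\tfrac12$ by duality through the symmetry $\langle\gamma_1 Tv_1,v_2\rangle=\langle v_1,\gamma_1 Tv_2\rangle$, and interpolate; part b) is handled by reading the same boundary identity with the roles of $\partial_\nu u$ and $\nabla_T u$ exchanged. The duality and interpolation steps are exactly as in the paper.

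The one genuine difference is how the interior remainder $R_\omega(u)$ is controlled with the sharp factor $|\omega|$. The paper does \emph{not} feed \eqref{H1P} back in. Instead it invokes Green's formula
\[
\int_\Omega|\nabla u|^2-\omega^2|u|^2=\int_\Gamma u\,\overline{\partial_\nu u}\,,
\]
takes real and imaginary parts, and distinguishes the two regimes $|\mathrm{Re}\,\omega|\gtrless\tfrac12\,\mathrm{Im}\,\omega$ to deduce $\int_\Omega(|\nabla u|^2+|\omega|^2|u|\,|\nabla u|)\lesssim_\sigma|\omega|^2\bigl|\int_\Gamma u\,\overline{\partial_\nu u}\bigr|$; the right-hand side is then reabsorbed via Young's inequality. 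This bootstrap is self-contained and simultaneously yields the boundary $L^2$-estimate $\|u\|_{L^2(\Gamma)}\lesssim_\sigma\|\partial_\nu u\|_{L^2(\Gamma)}$ needed for the $ND$ direction.

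Your route through \eqref{H1P} also reaches the goal, and in fact the integration by parts you single out is unnecessary (and, as stated, incomplete for complex $\omega^2$, since only the real part of $\int u\,h\cdot\overline{\nabla u}$ rewrites as $\tfrac12\int h\cdot\nabla|u|^2$). A direct bound suffices: $R_\omega(u)\lesssim_\sigma|\omega|\,\|u\|_{1,\omega,\Omega}^2\lesssim_\sigma|\omega|^3\|v\|_{1/2,\omega,\Gamma}^2\le|\omega|^2\|v\|_{1,\omega,\Gamma}^2$, the last step using the elementary inequality $\|\cdot\|_{1/2,\omega}\le|\omega|^{-1/2}\|\cdot\|_{1,\omega}$ for the $\omega$-weighted scale. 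What your sketch does not yet supply is the companion estimate $\|u\|_{L^2(\Gamma)}\lesssim_\sigma\|\partial_\nu u\|_{L^2(\Gamma)}$ required to close the Neumann endpoint; in the paper this comes out of the same case-splitting argument applied to $\int_\Omega\mathrm{div}(|u|^2 h)$.
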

Theorem \ref{necastheorem} will be used to prove the following lemma:
\begin{lemma}\label{LabelT}
 For $\tau\in [-\frac{1}{2},\frac{1}{2}]$, $T:H^{\frac{1}{2}+\tau}_\omega(\Gamma)\rightarrow H_{P}^{1+\tau}(\Omega)$ continuous and
 \begin{align}
  \Arrowvert T v\Arrowvert_{H_{P}^{1+\tau}(\Omega)}\lesssim_{\sigma} |\omega|\Arrowvert v\Arrowvert_{\frac{1}{2}+\tau, \omega,\Gamma}.
 \end{align}
\end{lemma}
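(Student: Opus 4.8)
The plan is to reduce the estimate for $T$ to two ingredients that are essentially already in place: the up-to-the-boundary regularity of solutions of $Pu=0$ expressed through their Cauchy data, and the mapping properties of the Dirichlet--Neumann map from Theorem \ref{necastheorem}. Since $u=Tv$ satisfies $Pu=0$ in $\Omega$, the second term in the graph norm vanishes and $\|Tv\|_{H^{1+\tau}_P(\Omega)}=\|u\|_{1+\tau,\omega,\Omega}$, so it suffices to bound the latter. First I would establish a Costabel-type a priori estimate
$$\|u\|_{1+\tau,\omega,\Omega}\lesssim_\sigma \|\gamma_0 u\|_{\frac12+\tau,\omega,\Gamma}+\|\gamma_1 u\|_{-\frac12+\tau,\omega,\Gamma}$$
for every $u$ with $Pu=0$ and every $\tau\in[-\frac12,\frac12]$, following \cite{cos} after localizing and freezing coefficients, and carefully tracking the weighted norms so that the constant is independent of $\omega$ for $\mathrm{Im}\,\omega\geq\sigma$. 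This is exactly the $\tau=0$ content behind \eqref{H1P}, now upgraded to the full range.

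With this estimate in hand the conclusion is immediate. The Dirichlet datum $\gamma_0 u=v$ contributes $\|v\|_{\frac12+\tau,\omega,\Gamma}$, while the Neumann datum is $\gamma_1 u=(\gamma_1 T)v$, and Theorem \ref{necastheorem}(a) yields $\|\gamma_1 u\|_{-\frac12+\tau,\omega,\Gamma}\lesssim_\sigma |\omega|\,\|v\|_{\frac12+\tau,\omega,\Gamma}$. Adding the two bounds gives
$$\|u\|_{1+\tau,\omega,\Omega}\lesssim_\sigma |\omega|\,\|v\|_{\frac12+\tau,\omega,\Gamma},$$
which is the claim. An equivalent route is to prove the a priori estimate only at the endpoints $\tau=\pm\frac12$, invoking Theorem \ref{necastheorem}(a) there, and then to interpolate: both the boundary scale $H^{\frac12+\tau}_\omega(\Gamma)$ and the graph scale $H^{1+\tau}_P(\Omega)$ form interpolation scales in $\tau$, and the interpolated bound keeps the linear factor $|\omega|^{\theta}|\omega|^{1-\theta}=|\omega|$.

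I expect the main obstacle to be the a priori regularity estimate itself, on a merely Lipschitz domain and for the \emph{closed} range $\tau\in[-\frac12,\frac12]$. For $\tau\in(-\frac12,\frac12)$ it follows from the Lipschitz arguments of \cite{cos} (with the case $\tau<0$ handled by transposition against the adjoint Dirichlet problem), but the closed endpoints $\tau=\pm\frac12$ are the genuine borderline for Lipschitz geometry, which is precisely why the sharper mapping statements elsewhere in the paper require $\Gamma\in C^{1,\alpha}$. A second, purely bookkeeping delicacy that must not be skipped is the $\omega$-dependence: one has to propagate the weighted norms through the lifting $\gamma_0^{-}$ and the coercivity rotation by $\frac{-i\bar\omega}{|\bar\omega|}$ already used to obtain \eqref{H1P}, so that the final constant is linear in $|\omega|$ rather than a higher power.
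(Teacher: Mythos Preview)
Your approach is correct but differs from the paper's. The paper does not prove a direct Costabel-type a priori estimate $\|u\|_{1+\tau,\omega,\Omega}\lesssim_\sigma \|\gamma_0 u\|_{\frac12+\tau,\omega,\Gamma}+\|\gamma_1 u\|_{-\frac12+\tau,\omega,\Gamma}$. Instead it introduces an auxiliary exterior Dirichlet problem on $\Omega_2=B\setminus\overline{\Omega}$ (with $u=0$ on $\partial B$), glues $Tv$ and $T_2v$ across $\Gamma$, and invokes the integral representation $u=-K_0^\omega[\gamma_1 u]+\int_{\partial B}\partial_\nu u\, G_\omega$. The volume regularity of $u$ then comes for free from the already-established mapping property \eqref{singpotref} of the single layer potential, and the required control on the Neumann data on $\Gamma$ and on $\partial B$ is supplied by Theorem \ref{necastheorem}. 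Both arguments ultimately rest on Theorem \ref{necastheorem}; the difference is that the paper recycles the $\omega$-explicit bound for $K_0^\omega$ rather than re-deriving an interior regularity estimate with tracked $\omega$-dependence. Your route is more self-contained, while the paper's is shorter because the Fourier computation behind \eqref{singpotref} has already done the work. Your concern about the closed endpoints $\tau=\pm\tfrac12$ on merely Lipschitz $\Gamma$ is well placed and applies equally to the paper's argument, since \eqref{singpotref} relies on $\gamma_0^*$ and hence on the trace range in Lemma \ref{tracelemma}.
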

\begin{lemma}
 For $s \in (\frac{1}{2},\frac{3}{2})$, $\gamma_{1}:H^{s}_{P}(\Omega)\rightarrow H^{s-\frac{3}{2}}_\omega(\Gamma)$ continuous and $$\|\gamma_1 u\|_{s-\frac{3}{2},\omega, \Gamma} \lesssim_\sigma \|u\|_{H^s_P(\Omega)} \ .$$
\end{lemma}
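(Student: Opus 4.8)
The plan is to define the conormal trace $\gamma_1 u$ weakly by duality against lifted boundary test functions, exactly as in the $s=1$ lemma above, and then to re-run the same chain of estimates with the interior Sobolev exponent $1$ replaced by $s$ and the test-function exponent $1$ replaced by $2-s$. First I would fix $\phi \in H^{3/2-s}_\omega(\Gamma)$ and lift it into the interior by a right-inverse of the Dirichlet trace, $\gamma_0^- : H^{3/2-s}_\omega(\Gamma) \to H^{2-s}_{\omega,loc}(\Omega)$, writing $w = \gamma_0^-\phi$. I then set
\[
\langle \gamma_1 u, \phi\rangle := -\,a(u, \gamma_0^-\phi) - \int_\Omega Pu\;\gamma_0^-\bar\phi\ ,
\]
which is the weak Green's identity and coincides, for smooth $u$, with $\int_\Gamma (\partial_\nu u)\bar\phi$. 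Independence of the chosen lift is inherited from the $s=1$ case: if $\gamma_0^-\phi$ has vanishing trace it lies in the closure of the test functions, on which the right-hand side vanishes by the definition of $Pu$ as a distribution.

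The core of the argument is three $\omega$-weighted Sobolev pairings, all uniform in $\omega$ with $\mathrm{Im}\,\omega \ge \sigma$. Using that differentiation costs exactly one order, $\|\partial_j v\|_{a,\omega,\Omega}\le \|v\|_{a+1,\omega,\Omega}$, and that the reciprocal-weight Cauchy--Schwarz pairing $|\langle f,g\rangle|\le \|f\|_{a,\omega}\|g\|_{-a,\omega}$ holds with constant one, I would bound the gradient term by
\[
\Big|\int_\Omega \nabla u\cdot\nabla\bar w\Big| \le \|\nabla u\|_{s-1,\omega,\Omega}\|\nabla w\|_{1-s,\omega,\Omega} \lesssim \|u\|_{s,\omega,\Omega}\|w\|_{2-s,\omega,\Omega}\ .
\]
For the zeroth-order part of $a$ I would absorb the factor $|\omega|^2$ through $\|u\|_{s-2,\omega,\Omega}\le |\omega|^{-2}\|u\|_{s,\omega,\Omega}$, so that $|\omega|^2|(u,w)_{L^2}| \lesssim \|u\|_{s,\omega,\Omega}\|w\|_{2-s,\omega,\Omega}$, and the source term by $|\int_\Omega Pu\,\bar w| \le \|Pu\|_{0,\omega,\Omega}\|w\|_{0,\omega,\Omega} \lesssim_\sigma \|Pu\|_{0,\omega,\Omega}\|w\|_{2-s,\omega,\Omega}$ (the last step uses $\|w\|_{0,\omega}\le |\omega|^{-(2-s)}\|w\|_{2-s,\omega}$). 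Combining and recalling $\|u\|_{H^s_P(\Omega)}^2 \simeq \|u\|_{s,\omega,\Omega}^2+\|Pu\|_{0,\omega,\Omega}^2$ gives $|\langle \gamma_1 u,\phi\rangle| \lesssim_\sigma \|u\|_{H^s_P(\Omega)}\|w\|_{2-s,\omega,\Omega}$. Finally, inserting $\|w\|_{2-s,\omega,\Omega}=\|\gamma_0^-\phi\|_{2-s,\omega,\Omega}\lesssim_\sigma\|\phi\|_{3/2-s,\omega,\Gamma}$ and taking the supremum over $\phi\neq 0$ yields $\|\gamma_1 u\|_{s-3/2,\omega,\Gamma}\lesssim_\sigma\|u\|_{H^s_P(\Omega)}$.

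As in the $s=1$ computation the powers of $|\omega|$ cancel, because the interior and test-function exponents are conjugate ($s$ and $2-s$ sum to $2$, the order of $P$), so no spurious factor of $|\omega|$ survives and only the trace constants carry the dependence on $\sigma$. The main obstacle is therefore not the estimates but securing the input: an $\omega$-uniformly bounded right-inverse $\gamma_0^- : H^{3/2-s}_\omega(\Gamma)\to H^{2-s}_{\omega,loc}(\Omega)$ for the full range of target exponents $2-s\in(\tfrac12,\tfrac32)$ on a Lipschitz boundary. For target exponents in $(\tfrac12,1]$ this is precisely the lifting recorded before the statement; its extension to $(\tfrac12,\tfrac32)$ follows from the trace Lemma \ref{tracelemma} together with the standard Sobolev extension operator on Lipschitz domains, with the $\omega$-dependence of its norm tracked exactly as for $\gamma_0$. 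With this lift in hand the proof is a verbatim repetition of the $s=1$ argument.
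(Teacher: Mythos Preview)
Your argument is correct, but it is considerably more work than what the paper does. The paper bypasses the entire duality computation by a two-line comparison of norms: since $s-\tfrac{3}{2}<0$ one has $(|\omega|^2+|\xi|^2)^{s-3/2}\lesssim_\sigma(1+|\xi|^2)^{s-3/2}$ uniformly in $|\omega|\ge\sigma$, hence $\|\gamma_1 u\|_{s-3/2,\omega,\Gamma}\lesssim_\sigma\|\gamma_1 u\|_{H^{s-3/2}(\Gamma)}$; similarly $s>0$ gives $\|u\|_{H^s(\Omega)}\lesssim_\sigma\|u\|_{s,\omega,\Omega}\le\|u\|_{H^s_P(\Omega)}$. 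The middle step is then just Costabel's classical ($\omega$-independent) conormal trace theorem $\|\gamma_1 u\|_{H^{s-3/2}(\Gamma)}\lesssim\|u\|_{H^s(\Omega)}$ for $s\in(\tfrac12,\tfrac32)$ on Lipschitz domains. The point is that the negative exponent on the boundary and the positive exponent in the interior make the $\omega$-weights harmless, so there is no need to track $\omega$ through Green's identity at all.

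Your route re-derives the Costabel trace bound from scratch with the $\omega$-weights built in, which does work but requires you to secure an $\omega$-uniform lifting $\gamma_0^-:H^{3/2-s}_\omega(\Gamma)\to H^{2-s}_{\omega,\mathrm{loc}}(\Omega)$ over the full range $2-s\in(\tfrac12,\tfrac32)$, a step you correctly flag as the only real obstacle. The paper's shortcut avoids this entirely. On the other hand, your approach would be the right one if the target exponent on $\Gamma$ were nonnegative, where the norm comparison the paper uses no longer goes in the required direction.
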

\begin{proof}
This follows from the Costabel's trace theorem for $\omega = 1$, $\|\gamma_1 u\|_{H^{s-\frac{3}{2}}(\Gamma)} \lesssim \|u\|_{H^s(\Omega)}$, using that $s-\frac{3}{2}<0$:
$$\|\gamma_1 u\|_{s-\frac{3}{2},\omega, \Gamma} \lesssim_\sigma\|\gamma_1 u\|_{H^{s-\frac{3}{2}}(\Gamma)} \lesssim \|u\|_{H^s(\Omega)} \lesssim_\sigma \|u\|_{H^s_P(\Omega)} \ .$$
\end{proof}
\begin{proof}[Proof of Lemma \ref{LabelT}] Following \cite{cos}, for a large enough ball $B \supseteq \overline{\Omega}$ and $ \Omega_{2} = B \backslash \overline{\Omega}$, we consider 
\begin{subequations} \label{helmholtz}
\begin{alignat}{2}
\Delta u+\omega^{2} u&=0 \qquad \text{ in}\ \Omega_2,\\
u|_{\Gamma}&=v, \\
u|_{\partial B} &= 0 
\end{alignat}
\end{subequations}
with $v\in H^{\frac{1}{2}+\tau}_\omega(\Gamma)$. The solution operator is denoted by $T_2$: $u = T_2v$. \\ \ \\
Let $ u =\begin{cases} Tv &  \text{in}\ \Omega \\
                       T_{2} v & \text{in}\ \Omega_{2}
         \end{cases}.$
Then we have \\
\begin{align}\label{represent}  u=-K_{0}^{\omega}\gamma_{1}u + \int_{\partial B}\partial_{\nu}u(y)\ G_{\omega}(\cdot,y) \; ds_y,\quad \text{in}\ \Omega \cup \Omega_{2}
    \end{align}
By Theorem \ref{necastheorem} we have for $\tau \in [-1/2,1/2]$
\begin{align*}
 \lVert \partial_{\nu} u|_{\partial B}  \rVert_{-1/2+\tau,\omega,\partial B} + \lVert \gamma_{1} T v  \rVert_{-1/2+\tau,\omega,\Gamma} + \lVert \gamma_{1} T_{2} v  \rVert_{-1/2+\tau,\omega,\Gamma} \lesssim |\omega| \lVert v  \rVert_{1/2+\tau,\omega,\Gamma}
\end{align*}
Therefore with (\ref{represent}) we have
\begin{align*}
  \lVert u \rVert_{1+\tau,\omega,\Omega} \lesssim |\omega| \lVert v  \rVert_{1/2+\tau,\omega,\Gamma}
\end{align*}
yielding the assertion.
\end{proof}


%
%
%

We now prove the estimates in Theorem \ref{necastheorem}. For these, we rely on frequency-explicit Rellich identities, which we then translate into the time--domain.

\begin{proof}[Proof of Theorem \ref{necastheorem}] Applying the identity (5.1.1) and the Green's formula (5.1.2) in Ne\u{c}as \cite{necas}, yields with $Av = \sum_{j=1}^{N} \partial_{j}^{2} u  + \omega^{2}u$ and $ n_{k} h_{k} \geq C > 0 $ that
\begin{align} \label{rellich1}
& \int_{\partial \Omega} \left(- h_{k} n_{k} (\partial_{i} u )^{2} + 2 ( h_{i} \partial_{i} u ) ( n_{k} \partial_{k} u) \right) \nonumber \\&=
 \int_{\Omega} \Big( - (\partial_{k} h_{k}) (\partial_{i} u )^{2} + 2(\partial_{k} h_{i} ) (\partial_{i} u ) (\partial_{k} u ) - 2 h_{i} \omega^{2} (\partial_{i} u )
    u \Big) \ .
\end{align}
Here $n_k$ is the $k$-th component of the unit normal vector to $\Gamma$ and $h$ is a suitably chosen vector field.

Note that the left hand side is
$$
    2 \int_{\partial \Omega} \left( (h_{i} \partial_{i} u) (n_{k} \partial_{k} u ) - n_{k} h_{k} (\partial_{i} u)^{2}  \right)
      + \int_{\partial \Omega} h_{k} n_{k}(\partial_{i} u)^{2}\ .$$
Since the first integral only contains tangential derivatives,
$$
 \sum_{i} \lVert \partial_{i} u \rVert_{L^{2} ( \partial \Omega) }^{2} \lesssim  \lVert u |_{ \partial \Omega} \rVert_{H^{1}(\partial \Omega)}^{2} +
 \int_{\Omega} \lvert \triangledown u \rvert^{2} + \lvert \omega \rvert^{2} \Big| \int_{\Omega} (h_{i} \partial_{i} u ) u \Big|\ .
$$
Hence
\begin{equation}\label{onestar}
 \sum_{i} \lVert \partial_{i} u \rVert_{L^{2} ( \Gamma) }^{2} \lesssim  \lVert \triangledown_{\Gamma} u \rVert_{L^{2}(\Gamma)}^{2} +
 \int_{\Omega} \lvert \triangledown u \rvert^{2} + \lvert \omega \rvert^{2}  \int_{\Omega} \lvert \partial u \rvert  \lvert u \rvert 
\end{equation}
Next we consider
\begin{equation}\label{twostar}
 \int_{\Omega} \lvert \triangledown u \rvert^{2} - {\omega}^2 \lvert u  \rvert^{2} = \int_{\Gamma}  u|_{\partial \Omega} (\partial_{\nu} \overline{u} )\ .
\end{equation}
Taking the real part of \eqref{twostar} leads to
$$
\int_{\Omega} \lvert \triangledown u \rvert^{2} + \left( \lvert Im \omega \rvert^{2} - \lvert Re \omega \rvert^{2} \right) \lvert u \rvert^{2}
         = Re \int_{\Gamma} u ( \partial_{\nu} \overline{u} )\ ,
$$
while the imaginary part is given by
$$
2 (Im \omega) (Re \omega) \int_{\Omega} \lvert u \rvert^{2} = Im \int_{\Gamma} u ( \partial_{\nu} \overline{u} )\ .
$$
We consider two cases: First, for $ \lvert Re \omega \rvert \geq \frac{Im \omega}{2} \geq \frac{\sigma}{2} $:
\begin{align*}
  \int_{\Omega} \lvert u \rvert^{2} \lesssim \frac{1}{\lvert \omega \rvert Im \omega} \Big| \int_{\Gamma} u ( \partial_{\nu} \overline{u} ) \Big| \ ,\\
    \int_{\Omega} \lvert \triangledown u \rvert^{2} \overset{\eqref{twostar}}{\lesssim}
  \frac{(1+\lvert \omega \rvert)}{Im \omega} \Big| \int_{\Gamma} u ( \partial_{\nu} \overline{u} ) \Big| \ ,\\
   \int_{\Omega} \lvert \triangledown u \rvert^{2} + \lvert \omega \rvert^{2} \lvert u \rvert^{2} \lesssim
   \frac{(1+\lvert \omega \rvert)}{Im \omega} \Big| \int_{\Gamma} u ( \partial_{\nu} \overline{u} ) \Big|\ .
\end{align*}

In the remaining case, $ \lvert Re \omega \rvert \leq \frac{Im \omega}{2} ( \geq \frac{\sigma}{2} ) $,  we have
$ Im \omega \simeq \left( \lvert Im \omega \rvert^{2} - \lvert Re \omega \rvert^{2} \right)^{1/2}
                          \simeq \left( \lvert Im \omega \rvert^{2} + \lvert Re \omega \rvert^{2} \right)^{1/2} \simeq \lvert \omega \rvert$, and with \eqref{twostar}
 \begin{align*}\int_{\Omega} \lvert \triangledown u \rvert^{2} + \lvert \omega \rvert^{2} \lvert u \rvert^{2} \lesssim 
 \Big| \int_{\Gamma} u ( \partial_{\nu} \overline{u} ) \Big| \ ,\\
\int_\Omega\lvert \omega \rvert^{2} \lvert u \rvert \lvert \triangledown u \rvert  \lesssim \lvert \omega \rvert^{2} \lVert u \rVert_{L^{2}(\Omega)} \lVert \triangledown u \rVert_{L^{2}(\Omega)}\ , \\
\lVert \triangledown u \rVert_{L^{2}(\Omega)} \lesssim_{\sigma}(1+\lvert \omega \rvert)^{1/2} \Big| \int_{\Gamma} u ( \partial_{\nu} \overline{u} ) \Big|^{1/2} \ ,\\
\lVert u \rVert_{L^{2}(\Omega)}\lesssim \frac{1}{\lvert \omega
\rvert^{1/2}} \Big| \int_{\Gamma} u ( \partial_{\nu} \overline{u} ) \Big|^{1/2} \ .
 \end{align*}
This implies
 $$\int_{\Omega} \lvert \triangledown u \rvert^{2} + \lvert \omega \rvert^{2} \lvert u \rvert \lvert \triangledown u \rvert \lesssim_{\sigma}
   \lvert \omega \rvert^{2} \Big| \int_{\Gamma} u ( \partial_{\nu} \overline{u} ) \Big|\ . $$
Therefore \eqref{onestar} implies
$$
\sum_{i} \lVert \partial_{i} u \rVert_{L^{2}(\Gamma)}^{2} \lesssim_{\sigma} \lVert \triangledown_{\Gamma} u \rVert_{L^{2}(\Gamma)}^2 +
  \lvert \omega \rvert^{2} \lVert u \rVert_{L^{2}(\Gamma)} \lVert \partial_{\nu} u \rVert_{L^{2}(\Gamma)}\ ,$$ 

$$\sum_{i} \lVert \partial_{i} u \rVert_{L^{2}(\Gamma)} \lesssim_{\sigma} \lVert \triangledown_{\Gamma} u \rVert_{L^{2}(\Gamma)}+ \lvert \omega \rvert^{2} \lVert u \rVert_{L^{2}(\Gamma)}
   \simeq  |\omega| \lVert u \rVert_{1, \omega, \Gamma}\ ,
$$
i.e. Dirichlet data in $H^{1}_\omega(\Gamma)$ are mapped continuously to  Neumann data in $L^{2}(\Gamma)$.

\begin{equation*}
  \lVert \gamma_{1} T \rVert_{\mathcal{L}(H^{1}_\omega(\Gamma), L^{2}(\Gamma))} \lesssim_{\sigma} |\omega|\ .
\end{equation*}
Standard arguments using the divergence theorem now show:
\begin{align*}
  \int_{\Omega} div(\lvert u \rvert^{2} h ) &= \int_{\Omega} \lvert u \rvert^{2} div h + 2 \text{Re} \int_{\Omega} \overline{u} (h \cdot \triangledown u ) \\
  & \lesssim \int_{\Omega} \lvert u \rvert^{2} 
  +  \int_{\Omega} |\overline{u}| |\triangledown u| \\
& \lesssim_\sigma \frac{1}{ \lvert \omega \rvert} \Big| \int_{\Gamma} \overline{u} \partial_{\nu} u \Big| +  \lVert u \rVert_{L^{2}(\Omega)} \lVert \triangledown u \rVert_{L^{2}(\Omega)}\\
  & \lesssim_{\sigma} \left( \frac{1}{\lvert \omega \rvert}  + 1 \right) \Big| \int_{\Gamma} \overline{u} \partial_{\nu} u \Big| \\
  & \lesssim \left(\frac{1}{\lvert \omega \rvert} + 1 \right) \left( \epsilon \lVert u \rVert_{L^{2}(\Gamma)}^{2} + \frac{1}{\epsilon} \lVert \partial_{\nu} u \rVert_{L^{2}(\Gamma)}^2 \right)^{1/2},
\end{align*}
where $ 0 < \epsilon \lesssim \frac{\lvert \omega \rvert}{\rvert \omega \lvert +1}$. Therefore,
$$ \lVert u \rVert_{L^{2}(\Gamma)} \leq \left( \frac{1}{\lvert \omega \rvert} +1 \right)^{2} \lVert \partial_{\nu} u \rVert_{L^{2}(\Gamma)}.
$$
Using \eqref{rellich1} as in the proof of Lemma 5.2.2 in \cite{necas}, we conclude:
\begin{align*}
\int_{\Gamma} \left( -h_{k} n_{k} (\partial_{i} u)^{2} + 2 (h_{i} \partial_{i} u) (n_{k} \partial_{k} u) \right) \leq
\int_{\Omega}\left( \lvert \triangledown u \rvert^{2} + \lvert \omega \rvert^{2} \lvert \triangledown u \rvert \lvert u \rvert\right)\ .
\end{align*}
Note that the left hand side is larger than $\int_{\Gamma} \sum_{i} (\partial_{i} u)^{2} - C \sum_{i} \lvert \partial_{i} u \rvert \lvert \partial_{\nu} u \rvert$, so that
\begin{align*}
 \int_{\Gamma} \sum_{i} \lvert \partial_{i} u \rvert^{2} \lesssim \int_{\Gamma} (\partial_{\nu} u)^{2} + \int_{\Omega} \left( \lvert \triangledown u \rvert^{2} + \rvert \omega \lvert^{2} \lvert \triangledown u \rvert \lvert u \rvert \right)\ .
\end{align*}
As above,
\begin{align*}
  \int_{\Omega} \left( \lvert \triangledown u \rvert^{2} + \lvert \omega \rvert^{2} \lvert \triangledown u \rvert \lvert u \rvert \right) \lesssim_{\sigma} \lvert \omega \rvert^{2} \Big| \int_{\Gamma} u \partial_{\nu} \overline{u} \Big|\ ,
\end{align*}
so that 
\begin{align*}
 \sum_{i} \int_{\Gamma} \lvert  \partial_{i} u \rvert^{2} &\lesssim \int_{\Gamma} (\partial_{\nu} u)^{2} + \lvert \omega \rvert^{4} \lVert u \rVert_{L^{2}(\Gamma)}^2 \\
 & \lesssim \lvert \omega \rvert^{2} \lVert u \rVert_{1, \omega,\Gamma}^{2}\ .
\end{align*}
Altogether, we conclude the endpoint estimate $\|\gamma_1 T\|_{\mathcal{L}(H^{1}_\omega(\Gamma), L^{2}(\Gamma))} \lesssim_{\sigma} \lvert \omega \rvert$. \\

Further, as in \cite{necas}, Theorem 5.1.3, $ \gamma_1 T $ extends by duality to a bounded linear operator from $ L^{2}(\Gamma)$ to $H^{-1}_\omega(\Gamma) $ and
$$
 \lVert \gamma_1T \rVert_{\mathcal{L}(L^{2}(\Gamma), H^{-1}_\omega(\Gamma))} \lesssim_{\sigma} \lvert \omega \rvert\ .
$$
By interpolation, we conclude for $\tau \in [-1/2,1/2]$
$$
 \lVert \gamma_1T \rVert_{\mathcal{L}(H^{\tau+1/2}_\omega(\Gamma), H^{\tau-1/2}_\omega(\Gamma)} \lesssim_{\sigma} \lvert \omega \rvert\ .
$$
Similar arguments apply to the Neumann-Dirichlet operator $ND$. They lead to $$\lVert ND \rVert_{\mathcal{L}(H^{-1}_\omega(\Gamma), L^{2}(\Gamma))} \lesssim_{\sigma} \lvert \omega \rvert\ ,$$
and then by duality and interpolation for $\tau \in [-\frac{1}{2}, \frac{1}{2}]$
$$
 \lVert N D \rVert_{\mathcal{L}(H^{\tau-1/2}_\omega(\Gamma), H^{\tau+1/2}_\omega(\Gamma))} \lesssim_{\sigma} \lvert \omega \rvert\  .
$$
\end{proof}

We finally prove Theorem \ref{mapthm2}. 

\begin{proof}[Proof of Theorem \ref{mapthm2}] From above we recall for $\tau \in [-\frac{1}{2}, \frac{1}{2}]$
\begin{align}  
\lVert K_{0}^{\omega}v \rVert_{1+\tau,\omega,\Omega} \lesssim_\sigma |\omega| \lVert v  \rVert_{-1/2+\tau,\omega,\Gamma} \ ,
\end{align}
and for $\tau \in (-\frac{1}{2}, \frac{1}{2})$
\begin{align}  
\lVert \mathcal{V}^{\omega} v \rVert_{1/2+\tau,\omega,\Gamma}  =\lVert \gamma_0 K_{0}^{\omega}v \rVert_{1/2+\tau,\omega,\Gamma} \lesssim_\sigma |\omega| \lVert v  \rVert_{-1/2+\tau,\omega,\Gamma} \ ,\label{Vestimate}\\
\lVert \mathcal{K'}^{\omega}v \rVert_{-1/2+\tau,\omega,\Gamma}=\lVert \gamma_{1}K_{0}^{\omega}v \rVert_{-1/2+\tau,\omega,\Gamma} \lesssim_\sigma |\omega| \lVert v  \rVert_{-1/2+\tau,\omega,\Gamma}\ . \label{Kestimate}
\end{align}
From the proof of Lemma \ref{LabelT}, \eqref{K1formula} and the trace theorem we obtain for $\tau \in [-\frac{1}{2}, \frac{1}{2}]$ 
\begin{align}  
\lVert K_{1}^{\omega}v \rVert_{1+\tau,\omega,\Omega} \lesssim_\sigma |\omega|^{2} \lVert v  \rVert_{1/2+\tau,\omega,\Gamma} \ ,
\end{align}
generalizing \eqref{K1estimate}. We conclude for $\tau \in (-\frac{1}{2}, \frac{1}{2})$
\begin{align}  
\lVert \mathcal{K}^{\omega} v \rVert_{1/2+\tau,\omega,\Gamma} = \lVert \gamma_{0}K_{1}^{\omega}v \rVert_{1/2+\tau,\omega,\Gamma} \lesssim_\sigma |\omega|^{2} \lVert v  \rVert_{1/2+\tau,\omega,\Gamma} \ ,\label{Kpestimate}\\
\lVert \mathcal{W}^{\omega}v \rVert_{-1/2+\tau,\omega,\Gamma} = \lVert \gamma_{1}K_{1}^{\omega}v \rVert_{-1/2+\tau,\omega,\Gamma} \lesssim_\sigma |\omega|^{2} \lVert v  \rVert_{1/2+\tau,\omega,\Gamma}\ . \label{Westimate}
\end{align}
Using the Fourier transform to translate back into the time domain, we conclude the proof of Theorem \ref{mapthm2} for $\Gamma$ Lipschitz.

 For $\Gamma$ of class $C^{1,\alpha}$, the Dirichlet and Neumann traces $\gamma_0$ and $\gamma_1$ are also continuous in the endpoints of the interval $\tau \in [-\frac{1}{2}, \frac{1}{2}]$, and the estimates \eqref{Vestimate}, \eqref{Kestimate}, \eqref{Kpestimate} and \eqref{Westimate} extend to the endpoints $\tau = \pm \frac{1}{2}$.  
\end{proof}

\end{document}